\newtheorem{thm}{Theorem}[section]
\newtheorem{lem}[thm]{Lemma}
\newtheorem{lem-dfn}[thm]{Lemma-Definition}
\newtheorem{prop}[thm]{Proposition}
\newtheorem{cor}[thm]{Corollary}
\theoremstyle{definition}
\newtheorem{defn}[thm]{Definition}
\newtheorem{ex}[thm]{Example}
\newtheorem{quest}[thm]{Question}
\newtheorem*{acknowledgement}{Acknowledgement}
\newtheorem{not-prop}[thm]{Notation-Proposition}
\newtheorem{defn-prop}[thm]{Definition-Proposition}
\newtheorem{conv}[thm]{Convention}
\theoremstyle{remark}
\newtheorem{rem}[thm]{Remark}
\numberwithin{equation}{section}
\newcommand{\thmref}[1]{Theorem~\ref{#1}}
\newcommand{\lemref}[1]{Lemma~\ref{#1}}
\newcommand{\proref}[1]{Proposition~\ref{#1}}
\newcommand{\dpref}[1]{Definition-Proposition~\ref{#1}}
\newcommand{\remref}[1]{Remark~\ref{#1}}
\newcommand{\exref}[1]{Example~\ref{#1}}
\newcommand{\sref}[1]{Section~\ref{#1}}
\newcommand{\ssref}[1]{Subsection~\ref{#1}}
\DeclareMathOperator{\Spec}{Spec}
\DeclareMathOperator{\spec}{Spec}
\DeclareMathOperator{\supp}{Supp}
\DeclareMathOperator{\Ker}{Ker}
\DeclareMathOperator{\Image}{Im}
\DeclareMathOperator{\Img}{Im}
\DeclareMathOperator{\Coker}{Coker}
\DeclareMathOperator{\cd}{CD}
\DeclareMathOperator{\chara}{char}
\DeclareMathOperator{\pol}{Pol}
\DeclareMathOperator{\Fr}{F}
\DeclareMathOperator{\di}{div}
\DeclareMathOperator{\divv}{div}%KW0430
\DeclareMathOperator{\type}{type}
\newcommand{\m}{\mathfrak m}
\newcommand{\PP}{\mathbb P}
\newcommand{\bbA}{\mathbb A}
\newcommand{\Z}{\mathbb Z}
\newcommand{\Q}{\mathbb Q}
\newcommand{\cF}{\mathcal F}
\newcommand{\cO}{\mathcal O}
\newcommand{\cU}{\mathcal U}
\newcommand{\Ann}{\mathrm{Ann}}
\newcommand{\coh}{Z_{\mathrm{coh}}}
\renewcommand{\:}{\colon}
\newcommand{\ol}[1]{\overline {#1}}
\newcommand{\fl}[1]{\left\lfloor #1 \right\rfloor}
\newcommand{\ce}[1]{\left\lceil #1 \right\rceil}
\newcommand{\gi}[1]{\langle #1 \rangle}
\newcommand{\defset}[2]{{\left\{#1\,\left| \,#2 \right. \right\}}}
\newcommand{\al}{{\alpha}}
\begin{document}
\title{A Geometric description of almost Gorensteinness for two-dimensional normal singularities}

%%%%%%%%%%%%%%%%%%%%%%%%%%%%%%%%%%%%%%%%%%%%%%%{\color{red} %%%%%%%%%%%%%%%% 
%%   Information for first author
%%%%%%% %%%%%%% %%%%%%% %%%%%%% %%%%%%% %%%%%%% %%%%%%% 
\author{Tomohiro Okuma}
\address[Tomohiro Okuma]{Department of Mathematical Sciences, 
Yamagata University,  Yamagata, 990-8560, Japan.}
\email{okuma@sci.kj.yamagata-u.ac.jp}
%%%%%%%%%%%%%%%%%%%%%%%%%%%%%%%%%%%%%%%%%%%%%%%%%%%%%%%%%%%%%%% 
%%    Information for second author
%%
\author{Kei-ichi Watanabe}
\address[Kei-ichi Watanabe]{Department of Mathematics, College of Humanities and Sciences, 
Nihon University, Setagaya-ku, Tokyo, 156-8550, Japan and 
Organization for the Strategic Coordination of Research and Intellectual Properties, Meiji University
}
\email{watnbkei@gmail.com}
%%%%%%%%%%%%%%%%%%%%%%%%%%%%%%%%%%%%%%%%%%%%%%%%%%%%%%%%%%%%%%% 
%%    Information for third author
%%%%%%% %%%%%%% %%%%%%% %%%%%%% %%%%%%% %%%%%%% %%%%%%% 
\author{Ken-ichi Yoshida}
\address[Ken-ichi Yoshida]{Department of Mathematics, 
College of Humanities and Sciences, 
Nihon University, Setagaya-ku, Tokyo, 156-8550, Japan}
\email{yoshida.kennichi@nihon-u.ac.jp}
%%%%%%%%%%%%%%%%%%%%%%%%%%%%%%%%%%%%%%%%%%%%%%%%%%%%%%%%%%%%%%% 
\thanks{TO was partially supported by JSPS Grant-in-Aid 
for Scientific Research (C) Grant Number 21K03215.
KW  was partially supported by JSPS Grant-in-Aid 
for Scientific Research (C) Grant Number 23K03040.
KY was partially supported by JSPS Grant-in-Aid 
for Scientific Research (C) Grant Number 19K03430.}
%%%%%%%%%%%%%%%%%%%%%%%%%%%%%%%%%%%%%%%%%%%%%%%%%%%%%%%%%%%%%%% 
\keywords{Almost Gorenstein local ring, elliptic singularity, rational singularity, $p_g$-ideal, two-dimensional normal domain}
\subjclass[2020]{Primary: 13H10; Secondary: 13G05, 14B05, 14J17}
%\subjclass[2020]{13G05, 14J17,13H10, 14J27}
%%%%%%%%%%%%%%%%%%%%%%%%%%%%%%%%%%%%%

%%% Abstract
\begin{abstract} 
Let $A$ be an excellent two-dimensional normal local ring containing an algebraically closed field. Then $A$ is 
called an elliptic singularity %}
if $p_f(A)=1$, where $p_f$ denotes the fundamental genus. 
On the other hand, the concept of almost Gorenstein rings was introduced by 
Barucci and Fr\"oberg %\cite{BF} 
for one-dimensional local rings and %KW0620 it 
generalized by Goto, Takahashi and Taniguchi %\cite{GTT} 
to higher dimension.\par
In this paper, we describe almost Gorenstein rings in geometric language using 
resolution of singularities and give criterions to be almost Gorenstein. 
In particular, we show that  elliptic singularities are almost Gorenstein. 
Also, for every integer $g\ge 2$, we provide examples of singularities that is 
almost Gorenstein (resp.  not almost Gorenstein) with $p_f(A)=g$.%}
\par
We also provide several examples of determinantal singularities associated with $2\times 3$ matrices, which include both almost Gorenstein singularities and non-almost Gorenstein singularities.%}%TO0628 
%We also provide several determinantal singularities associated with $2\times 3$ matrices{\color{blue}, which are almost Gorenstein.}%KW0620 
\end{abstract}

\maketitle

\section{Introduction}

The notion of almost Gorenstein local rings was introduced by Barucci and Fr\"oberg \cite{BF} for one-dimensional local rings
and %KW0620 it 
 has been generalized by Goto, Takahashi, and Taniguchi \cite{GTT} for Cohen-Macaulay local rings having canonical modules of any dimension. 
Namely, when $(A,\m)$ is a Cohen-Macaulay 
%{\color{red}
 local %}
 ring of dimension $d$, 
 we call $A$  {\bf almost Gorenstein} if there exists an  element 
 $\omega \in K_A$ such that the $A$-module $U = K_A/ \omega A$ satisfies the equality
\[
\mu(U) = e_0(U),
\]   
%}
where  $\mu(U) = \dim_{A/\m}( U /\m U)$ is the number of minimal generators of $U$ and 
$e_0(U)$ denotes the multiplicity of $U$. An $A$-module which satisfies this property is 
called an % {\bf Ulrich $A$-module}.
 {\em Ulrich $A$-module}.
In other words, a Cohen-Macaulay $A$-module $U$ of dimension $d-1$
 is an Ulrich $A$-module  if
for some parameter system $(x_1, x_2, \ldots, x_{d-1})$ of $U$, we have the equality 
$(x_1, x_2, \ldots, x_{d-1})U = \m U$. 

The aim of this paper is to determine 
whether a normal two-dimensional ring $A$ is  
almost Gorenstein using resolutions of singularities of $A$.

Let $(A, \m)$ be an excellent two-dimensional normal local ring containing an algebraically closed field.
In \cite[\S 11]{GTT}, it is proved that $A$ is almost Gorenstein if $\m$ is a $p_g$-ideal (cf. \cite{PWY}); this fact implies that a rational singularity is almost Gorenstein. 

In this article, we prove that $A$ is almost Gorenstein if it is an elliptic singularity (\thmref{t:elliptic}),
by using cohomology of sheaves on a resolution of singularities.
Let $X \to \spec (A)$ be a resolution of singularities. 
We define that $A$ is an {\em elliptic singularity} if $p_f(A)=1$, where 
$p_f(A)$ denotes the arithmetic genus of the Artin's fundamental cycle $Z_f$ on  $X$.
The concept of elliptic singularities was introduced by Wagreich \cite{wag.ell} and have been studied  by 
 many authors: Laufer \cite{la.me}, Yau \cite{yau.normal}, \cite{yau.max}, Tomari  \cite{tomari.ell}, N{\'e}methi \cite{nem.ellip},  Okuma \cite{o.numGell}, Nagy--N{\'e}methi \cite{nn.AbelIII}.
Our argument is based on the analysis of cohomology of coherent sheaves on $X$ related with canonical divisor $K_X$ and the 
 cycle on $X$ which represents the maximal ideal $\m$. 
In the proof of the main result, a property of the cohomological cycle for elliptic singularities (Konno \cite{Konno-coh}) plays an important role.
Our method is also useful to prove %KW0620  for the proof of
that $A$ is almost Gorenstein if $\m$ is a $p_g$-ideal 
(\thmref{t:pg}). 
 In particular, this gives a new proof to show that a rational singularity is almost
 Gorenstein.

This paper is organized as follows.

In \sref{s:pre}, we recall the terminology and basics of cycles on a resolution of singularities,  and then introduce a commutative diagram that connects the canonical sheaf on the resolution space with an $A$-module $U$ which determines 
 almost Gorenstein property of $A$.

Let $K_X$ be the canonical divisor of $X$. Recall that $H^0(\cO_X(K_X))\subset K_A$ and 
$\ell_A (K_A/ H^0(\cO_X(K_X)) = p_g(A)$. Namely, $H^0(\cO_X(K_X)) = K_A$ holds if 
and only if  $A$ is a rational singularity. 
In general, we can find a resolution $X$ and a cycle $C_X$ on $X$ so that  
$H^0(\cO( K_X+ C_X)) = K_A$ and $\cO( K_X+ C_X)$ is generated by global sections. 
In the following, we %always 
assume that $K_X+ C_X$ satisfies these 
conditions.

%\bigskip

We introduce the most important sheaf $\cU$ on $X$ defined by the exact sequence 
\[0 \to  \cO_X \overset{\times \omega}\longrightarrow \cO_X(K_X+C_X) \to  \cU \to 0, \]
where  $\omega \in K_A = H^0(X, \cO_X(K_X+C_X))$ 
is a general element of $K_A$. 
We discuss fundamental properties of sheaves related to this exact sequence. 
We have that $ U \subset H^0(\cU)$ and $\cU$ is always
generated by global sections. 

In \sref{s:Ugg}, we prove that  if  $H^0(\cU) = U$ holds, 
then  $A$ is an almost Gorenstein ring.
Moreover, %Incidentally, 
if $A$ is a rational singularity or
an elliptic singularity, then we can verify that $H^0(\cU) = U$ and 
can conclude that 
{\bf rational singularities and elliptic singularities are almost Gorenstein.}
\par 

In \sref{s:mult}, we provide a characterization of a singularity being almost Gorenstein
 in terms of $\cU$. 
 Among several examples,  we can prove that a singularity is almost Gorenstein
  if its maximal ideal is a $p_g$-ideal.
    
In \sref{s:Ell}, we give some examples of elliptic singularities  and 
show ring-theoretic properties of those singularities.

In \sref{s:example}, we consider cone singularities $R(C,D)$ associated with a nonsingular curve $C$ of genus $g \ge 2$ and a divisor $D$ on $C$ with $\deg D>0$. 
Note that  $p_f(A) = g$ for these singularities.
 We can provide many examples of both almost Gorenstein rings and not almost 
Gorenstein rings of this class.
In particular, in the case $D=K_C+p$,  where $p$ is a point of $C$, we prove that $R(C,D)$ is almost Gorenstein if and only if $C$ is hyperelliptic.
Also,  we show that $R(C,D)$ is not almost Gorenstein if $\deg D> 2 \deg K_C$.
These examples also show that
the almost Gorenstein property cannot be determined only by the pair $(g, \deg D)$.

In \sref{s:det}, we provide determinantal two-dimensional normal singularities associated with $2\times 3$ matrices of the following types: 
(1) $A$ is non-rational and $\m$ is $p_g$-ideal, 
(2) %$A$ is elliptic and non-Gorenstein, (3) 
$A$ is non-elliptic 
almost Gorenstein and $\m$ is not $p_g$-ideal, and (3) $A$ is not almost Gorenstein.  

\begin{acknowledgement}
The authors are grateful to Professor Masataka Tomari for letting us know his result on elliptic singularities.
\end{acknowledgement}

\section{Preliminaries}\label{s:pre}

Throughout this paper, let $(A,\m)$ be an excellent two-dimensional normal local ring containing an algebraically closed field $k \cong A/\m$,  and let 
$\pi\:X \to \spec (A)$ a resolution of singularities with exceptional 
divisor $E:=\pi^{-1}(\m)$. 
Let $E=\bigcup_{i=1}^n E_i$ be the decomposition into the irreducible components, and let $K_X$ denote the canonical divisor on $X$.

\subsection{Almost Gorenstein local rings}\label{ss:AGL}
\phantom{AAAAAAA}\par 

For a finitely generated 
$A$-module $M$, we denote by $e_0(M)$ the multiplicity of $M$ with respect to $\m$ and by $\mu(M)$ the number of elements in a minimal system of generators for $M$; 
namely, if $\dim M =s$, 
\[
e_0(M) = \lim_{n\to \infty} \dfrac{\ell_A(M/\m^n M) s!}{n^s}, \quad 
\mu(M)=\ell_A(M/\m M).
\]
We say that $M$ is an {\em Ulrich module} if $M$ is a Cohen-Macaulay module with $e_0(M)=\mu(M)$.
 If $(x_1, \ldots, x_s)$ is a minimal reduction of the maximal ideal of 
$A/\Ann_A(M)$, then $M$ is an Ulrich $A$-module if and only if $(x_1, \ldots, x_s)M= \m M$ 
holds. 

Let $K_A$ denote the canonical module of $A$ and let $\type(A)=\mu(K_A)$.

\begin{defn}
[cf. {\cite[Definition 3.3]{GTT}}]
$A$ is said to be {\em almost Gorenstein}  if there is an exact sequence of $A$-modules
\[
0 \to A \to K_A \to U \to 0
\]
such that 
$U$ is an Ulrich $A$-module. 
 Note that the exactness implies that $U=0$ or $U$ is a Cohen-Macaulay $A$-module of $\dim U =1$ (cf. \cite[Lemma 3.1]{GTT}).
\par 
A Gorenstein singularity is almost Gorenstein by definition.
\end{defn}

Any homomorphism $A\to K_A$ is given by an element $\omega \in K_A$. If we take a general element so that $\omega\not \in \m K_A$, then  $\mu(K_A/\omega A) = \mu (K_A)-1=\type(A)-1$.

\subsection{Cycles and genera}\label{ss: cycle&g}
\phantom{AAAAAAA}\par 

A divisor $D$ on $X$  with $\supp(D)\subset E$ is called a {\em cycle}.
We may consider a cycle $D>0$ as a scheme with structure sheaf $\cO_D := \cO_X/\cO_X(-D)$.
We write $\chi(C)=\chi(\cO_C):=h^0(\cO_C)-h^1(\cO_C)$, where $h^i(\cF)$ denotes $\ell_A(H^i(\cF))$.
A divisor $L$ on $X$ is said to be {\em nef} (resp. {\em anti-nef}) if $LE_i \ge 0$ (resp. $LE_i\le 0$) for all $E_i \subset E$.
It is known that anti-nef cycles are effective
%{\color{red}
and the support of a non-zero anti-nef cycle is $E$.
%}
 There exists the minimum $Z_f$ among the anti-nef cycles $D>0$; 
the cycle $Z_f$ is called the {\em fundamental cycle} (\cite{artin.rat}).
The {\em fundamental genus} $p_f(A)$ of $A$ is defined by $p_f(A)=1-\chi(Z_f)$, and the {\em geometric genus} $p_g(A)$ of $A$ is defined by $p_g(A)=h^1(\cO_X)$; these are independent of the choice of the resolution.
We have $p_f(A) \le p_g(A)$.
While the arithmetic genus is determined by the resolution graph, the geometric genus is not. %}
The local ring $A$ is said to be {\em rational} if $p_g(A)=0$.
It is known that $A$ is a rational singularity if and only if $p_f(A)=0$ (see \cite{artin.rat}).
If $I \subset A$ is an integrally closed $\m$-primary ideal, then 
there exists a resolution $X \to \Spec A$ and a cycle $Z>0$ on $X$ 
such that $I \cO_X = \cO_X(-Z)$ and $I=H^0(X,\cO_X(-Z))$. In this case, we say that $I$ is {\em represented} by $Z$ on $X$.  

\subsection{Fundamental exact sequences}
\label{ss:exactseq}
\phantom{AAAAAAA}\par 

Let $L$ be a divisor on $X$.
We say that $L$  or an invertible sheaf $\cO_X(L)$ has {\em no fixed component} if the restriction $H^0(\cO_X(L))\to H^0(\cO_{E_i}(L))$ is non-trivial for every $E_i\subset E$.
If $L$ has no fixed component, then the base locus of $H^0(\cO_X(L))$ does not contain any $E_i$ and a general element of $H^0(\cO_X(L))$ generates $\cO_X(L)$ at general points of $E$.

\bigskip

%{\color{red}
{\bf In this subsection, we assume the following.}%KW0813

\begin{enumerate}
\item Assume that  $L$ has no fixed component, and let $\omega\in H^0(\cO_X(L))$ be a general element and $\cU=\cO_X(L)/\omega\cO_X$.
\item Let $D>0$ be any cycle on $X$.
\item We write $\cU(-D)=\cU\otimes_{\cO_X}\cO_X(-D)$ and $\cU_D=\cU\otimes_{\cO_X}\cO_D$.
\end{enumerate}
We call a subscheme of $X$ of pure dimension one a {\bf curve}.
%}

\begin{lem}\label{l:inj}
We have the following.
\begin{enumerate}
\item The support of $\cU$ is an affine curve 
and  $\supp(\cU_D)$ is a finite set.
\item The morphism $\cO_D \to \cO_D(L)$ induced by $\cO_X \xrightarrow{\times \omega} \cO_X(L)$ is injective.
\item For any divisor $L'$ on $X$, 
we have $h^1(\cO_X(L')) \ge h^1(\cO_X(L+L'))$.
\end{enumerate}
\end{lem}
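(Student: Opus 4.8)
The plan is to prove (1) geometrically and then obtain (2) and (3) as consequences. First I record the local shape of $\cU$: if $e$ is a local generator of $\cO_X(L)$ and $\omega = f e$, then the cokernel of $\cO_X\xrightarrow{\times\omega}\cO_X(L)$ is $(\cO_X/f)e$, so $\cU\cong\cO_X(L)|_{D_\omega}$, where $D_\omega$ denotes the divisor of zeros of $\omega$; in particular $\supp(\cU)=D_\omega$ is a nonzero effective divisor in $|L|$, hence a curve (pure dimension one).

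For (1) the essential input is the no-fixed-component hypothesis. A general $\omega$ restricts nontrivially to each $\cO_{E_i}(L)$, so it does not vanish identically along any $E_i$, and therefore $D_\omega$ contains no component of $E$. Hence $D_\omega\cap E$ is finite, and $\pi|_{D_\omega}\colon D_\omega\to\spec(A)$ has finite fibers: it is an isomorphism over $\spec(A)\setminus\{\m\}$, and its fiber over $\m$ is the finite set $D_\omega\cap E$. A proper morphism with finite fibers is finite, so $\pi|_{D_\omega}$ is finite onto its scheme-theoretic image, which is a closed subscheme of the affine scheme $\spec(A)$ and hence affine; since a scheme finite over an affine scheme is affine, $\supp(\cU)=D_\omega$ is an affine curve. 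The second assertion is then immediate: $\supp(\cU_D)\subset D_\omega\cap\supp(D)\subset D_\omega\cap E$ is finite.

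For (2), the map $\cO_D\to\cO_D(L)$ is multiplication by $\omega$, so injectivity is equivalent to $\omega$ being a nonzerodivisor on $\cO_D$. As $D$ is an effective Cartier divisor on the smooth surface $X$, the sheaf $\cO_D$ is Cohen-Macaulay of pure dimension one, with associated points exactly the generic points $\eta_i$ of the $E_i\subset\supp(D)$. Since $\omega$ does not vanish along any $E_i$, it is a unit in each discrete valuation ring $\cO_{X,\eta_i}$, so multiplication by $\omega$ is injective at every associated point of $\cO_D$; injectivity of the map follows. For (3), tensoring the defining sequence by the invertible sheaf $\cO_X(L')$ gives
\[
0\to\cO_X(L')\xrightarrow{\times\omega}\cO_X(L+L')\to\cU(L')\to 0,
\]
and $\cU(L')$ is supported on the affine curve $D_\omega$, so $H^1(X,\cU(L'))\cong H^1(D_\omega,\,\cU(L')|_{D_\omega})=0$. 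The long exact cohomology sequence then makes $H^1(\cO_X(L'))\to H^1(\cO_X(L+L'))$ surjective, and since all the $R^1\pi_*$ in question are coherent and supported at $\m$ (hence of finite length), this yields $h^1(\cO_X(L'))\ge h^1(\cO_X(L+L'))$.

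The main obstacle is the affineness assertion in (1): one must be careful that a general $\omega$ really avoids every $E_i$—this is exactly what the no-fixed-component hypothesis buys—and then combine the standard facts (proper with finite fibers $\Rightarrow$ finite, and finite over affine $\Rightarrow$ affine) to conclude that $D_\omega$, rather than merely its image in $\spec(A)$, is affine. Once (1) is established, (2) is a routine associated-prime computation and (3) is a formal consequence of the vanishing $H^1(\cU(L'))=0$.
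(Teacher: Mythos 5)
Your proof is correct and follows essentially the same route as the paper: the no-fixed-component hypothesis forces $\supp(\cU)$ to avoid every $E_i$ (whence affineness via finiteness over $\Spec(A)$, a point the paper leaves implicit), injectivity in (2) comes from $\cO_D$ having no associated points other than the generic points of its components (the paper phrases this as torsion-freeness of $\cO_D$), and (3) is the identical vanishing-of-$H^1$-on-the-affine-support argument. Your write-up simply fills in the details that the paper compresses.
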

%}
\begin{proof}
Since $L$ has no fixed component,
$\omega$ generates $\cO_X(L)$ at general points  of %each $E_i$
$E$,
we see that $\supp(\cU)$ does not contain any $E_i$ and that 
$\Ker(\cO_D \to \cO_D(L))$ should be a torsion submodule of $\cO_D$; however, $\cO_D$ is torsion free.
%{\color{red} 
Clearly,  $\supp(\cU_D) = \supp(\cU)\cap \supp(D)$.
%}
Thus we have (1) and (2).

By tensoring the exact sequence
\[
0 \to \cO_X \xrightarrow{\times \omega} \cO_X(L) \to \cU \to 0
\]
with $\cO_X(L')$, since 
 $\cU$ is a coherent sheaf on an affine scheme, 
we have a surjection $H^1(\cO_X(L')) \to H^1(\cO_X(L+L'))$ (cf. \cite[\S 2.6]{OWYgood}).
\end{proof}

We denote by $\cd(L,\omega,D)$ the commutative diagram shown in Table \ref{tab:CD}:

\begin{table}[htb]
\[
\begin{CD}
@. 0 @. 0 @. 0 @. \\
@. @VVV @VVV @VVV \\
0 @>>> \cO_X(-D) @>{\times \omega}>> \cO_X(L-D) @ >>> \cU (-D) @>>> 0 \\
@. @VVV @VVV @VVV \\
0 @>>> \cO_X @>{\times \omega}>> \cO_X(L) @ >>> \cU @>>> 0  \\
@. @VVV @VVV @VVV \\
0 @>>> \cO_D @>{\times \omega}>>\cO_D(L) @ >>> \cU_D @>>> 0 \\
@. @VVV @VVV @VVV \\
@. 0 @. 0 @. 0 @. \\
\end{CD}
\]

\smallskip

\caption{\label{tab:CD} $\cd(L,\omega,D)$}
\end{table}

\begin{lem}\label{l:cdU}
We have the following.
\begin{enumerate}
\item All vertical and horizontal sequences in $\cd(L,\omega,D)$ are exact.
\item We have $H^1(\cU (-D))=H^1(\cU)=H^1(\cU_D)=0$ and 
$\ell_A(H^0(\cU)/H^0(\cU(-D)))=h^0(\cU_D)=LD$.
\end{enumerate}
\end{lem}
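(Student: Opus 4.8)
The plan is to build the entire diagram $\cd(L,\omega,D)$ by tensoring two exact sequences and to reduce every exactness and vanishing statement to the injectivity already recorded in \lemref{l:inj}. The middle row $0 \to \cO_X \xrightarrow{\times\omega}\cO_X(L)\to \cU\to 0$ is exact by the definition of $\cU$, and its first two terms are invertible. Tensoring this row with the invertible (hence flat) sheaf $\cO_X(-D)$ keeps it exact and produces the top row; tensoring the defining sequence $0\to\cO_X(-D)\to\cO_X\to\cO_D\to 0$ of $\cO_D$ with the invertible sheaves $\cO_X$ and $\cO_X(L)$ produces the exact left and middle columns. Thus the only exactness statements that are not purely formal are the left-exactness of the bottom row and of the right column.

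For the bottom row, right-exactness comes from tensoring the middle row with $\cO_D$, and left-exactness, i.e. the injectivity of $\cO_D\xrightarrow{\times\omega}\cO_D(L)$, is exactly \lemref{l:inj}(2). For the right column, right-exactness comes from tensoring $0\to\cO_X(-D)\to\cO_X\to\cO_D\to 0$ with $\cU$, and left-exactness amounts to the single vanishing $\Tor_1^{\cO_X}(\cU,\cO_D)=0$. I would obtain this from the middle row: since $\cO_X$ and $\cO_X(L)$ are locally free, tensoring with $\cO_D$ embeds $\Tor_1^{\cO_X}(\cU,\cO_D)$ into $\Ker(\cO_D\xrightarrow{\times\omega}\cO_D(L))$, which is $0$ by \lemref{l:inj}(2). (Equivalently, one may apply the snake lemma to the morphism from the top row to the middle row.) This proves (1).

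For (2), I would first dispose of the $H^1$ vanishings using the support analysis in \lemref{l:inj}(1). Since $\supp(\cU)$ is an affine curve and tensoring by the invertible sheaf $\cO_X(-D)$ does not change the support, both $\cU$ and $\cU(-D)$ are coherent sheaves on an affine scheme, so $H^1(\cU)=H^1(\cU(-D))=0$ by Serre's vanishing; and $\cU_D$ has finite support, hence $H^1(\cU_D)=0$ as well. Feeding $H^1(\cU(-D))=0$ into the cohomology long exact sequence of the exact right column $0\to\cU(-D)\to\cU\to\cU_D\to 0$ gives the short exact sequence $0\to H^0(\cU(-D))\to H^0(\cU)\to H^0(\cU_D)\to 0$, whence $\ell_A(H^0(\cU)/H^0(\cU(-D)))=h^0(\cU_D)$. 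To identify this with $LD$, note that $\cU_D$ has zero-dimensional support, so $h^0(\cU_D)=\chi(\cU_D)$, and additivity of $\chi$ along the exact bottom row gives $\chi(\cU_D)=\chi(\cO_D(L))-\chi(\cO_D)$; applying Riemann--Roch on the projective cycle $D$ to $\cO_D(L)$ yields $\chi(\cO_D(L))-\chi(\cO_D)=\deg(\cO_D(L))=LD$.

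I expect the bookkeeping in (1) to be the main thing to get right: one must check that the only non-formal inputs are the two injectivities and that the single $\Tor$-vanishing genuinely follows from \lemref{l:inj}(2) rather than requiring an independent argument. A secondary subtlety is the Riemann--Roch step, where $D$ may be a non-reduced cycle; here the needed identity $\chi(\cO_D(L))-\chi(\cO_D)=LD$ still holds, by bilinearity of the intersection pairing and additivity of $\chi$ over the components of $D$.
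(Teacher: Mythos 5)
Your proof is correct and follows essentially the same route as the paper's: injectivity of $\cO_D\xrightarrow{\times\omega}\cO_D(L)$ from \lemref{l:inj}(2) propagated through the diagram (the paper's terse ``this implies that $\cU(-D)\to\cU$ is also injective'' is exactly your snake-lemma/$\Tor_1$ observation), vanishing of the $H^1$'s from the affine support of $\cU$, and the length identity via $h^0(\cU_D)=\chi(\cU_D)=\chi(\cO_D(L))-\chi(\cO_D)=LD$. You have merely made explicit the formal bookkeeping (flatness of invertible sheaves, the $\Tor$-vanishing, Riemann--Roch on the possibly non-reduced cycle $D$) that the paper leaves to the reader.
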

\begin{proof}
(1) 
The morphism $\cO_D \to \cO_D(L)$ is injective by \lemref{l:inj}.
This implies that $\cU(-D)\to \cU$ is also injective.
We can easily check the other exactness.

(2) Since the support of $\cU$ is an affine curve by \lemref{l:inj}, these cohomology groups vanish. 
From the third horizontal exact sequence in  $\cd(L,\omega,D)$, we have 
\[
h^0(\cU_D)=\chi(\cU_D)=\chi(\cO_D(L)) - \chi(\cO_D)
=LD.
\qedhere
\]
\end{proof}

The arguments in \cite[Proposition 2.6]{OWYgood} yields the following 
%{\color{red}
lemma that plays an important role in \sref{s:Ugg} and \sref{s:mult}. 
%}

\begin{lem}
\label{l:2ndfund}
Let $L_1$ and $L_2$ be divisors on $X$ such that $\cO_X(L_1)$ and $\cO_X(L_2)$ are generated.
%{\color{red}
 Take general elements $f_1 \in H^0(\cO_X(L_1))$ and 
$f_2\in H^0(\cO_X(L_2))$.
Then we have the following exact sequence$:$
\begin{equation} \label{eq: 2ndfund}
0 \to \cO_X \xrightarrow {(f_1,f_2)}
 \cO_X(L_1) \oplus
\cO_X(L_2)
\xrightarrow{\genfrac{(}{)}{0pt}{}{-f_2}{f_1}}
\cO_X(L_1+L_2) \to 0.
\end{equation}
%}
\begin{enumerate} 
\item If $h^1(\cO_X(L_1)) = p_g(A)$ or $h^1(\cO_X(L_2)) = p_g(A)$ holds, then the 
induced mapping 
$ H^0(\cO_X(L_1))  \oplus H^0( \cO_X(L_2)) 
\xrightarrow{\genfrac{(}{)}{0pt}{}{-f_2}{f_1}}
H^0( \cO_X(L_1+L_2))$ is surjective and hence we have 
$ H^0( \cO_X(L_1+L_2)) = f_1 H^0( \cO_X(L_2)) + f_2 H^0(\cO_X(L_1)) $.
\item If $h^1(\cO_X(L_1)) = p_g(A)$ holds, then we have $h^1(\cO_X(L_1+L_2)) = h^1(\cO_X(L_2))$. 
\end{enumerate}
\end{lem}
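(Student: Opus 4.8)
The plan is to start from the Koszul-type exact sequence \eqref{eq: 2ndfund}, whose existence is the formal backbone of the whole lemma. To produce it, I would observe that the pair $(f_1, f_2)$ of general sections gives a map $\cO_X \to \cO_X(L_1)\oplus \cO_X(L_2)$, and that the skew-symmetric combination $\genfrac{(}{)}{0pt}{}{-f_2}{f_1}$ composes with it to zero, yielding a complex. Injectivity of the first map is clear since $f_1$ (say) is a nonzero section on the integral scheme $X$. Surjectivity of the second map on the right is the point that needs the genericity of $f_1,f_2$ together with the fact that $\cO_X(L_1)$ and $\cO_X(L_2)$ are generated by global sections: the two sections have no common zero along $E$ in codimension one, so locally the Koszul complex on a regular sequence of length two is exact. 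Exactness in the middle then follows from a local computation, exactly as in \cite[Proposition 2.6]{OWYgood}, to which I would appeal rather than reproduce the calculation.

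Granting the sequence \eqref{eq: 2ndfund}, I would take the long exact sequence in cohomology. Writing $L_3 = L_1 + L_2$, the relevant portion reads
\[
H^0(\cO_X(L_1))\oplus H^0(\cO_X(L_2)) \xrightarrow{\genfrac{(}{)}{0pt}{}{-f_2}{f_1}} H^0(\cO_X(L_3)) \xrightarrow{\delta} H^1(\cO_X) \xrightarrow{(f_1,f_2)} H^1(\cO_X(L_1))\oplus H^1(\cO_X(L_2)).
\]
The key structural fact is that $H^1(\cO_X) = H^1(X,\cO_X)$ has length $p_g(A)$, which is the maximal possible value for the $H^1$ of a line bundle in the range governed by \lemref{l:inj}(3): for any divisor $L'$ one has $h^1(\cO_X(L')) \le h^1(\cO_X)$ whenever $L'$ is of the form $L + (\text{something nef})$, and more to the point the map induced by multiplication by a section is always surjective on $H^1$. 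Thus under the hypothesis $h^1(\cO_X(L_1)) = p_g(A)$, the map $H^1(\cO_X) \xrightarrow{f_1} H^1(\cO_X(L_1))$ is a surjection between modules of equal length, hence an isomorphism, and in particular it is injective. Therefore the component $(f_1,f_2)\colon H^1(\cO_X) \to H^1(\cO_X(L_1))\oplus H^1(\cO_X(L_2))$ is injective, which forces the connecting map $\delta$ to be zero. That gives the surjectivity claimed in part (1), and the displayed decomposition $H^0(\cO_X(L_3)) = f_1 H^0(\cO_X(L_2)) + f_2 H^0(\cO_X(L_1))$ is just a restatement of surjectivity of the skew map. The symmetric hypothesis $h^1(\cO_X(L_2)) = p_g(A)$ is handled identically.

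For part (2) I would continue the long exact sequence one step further, to
\[
H^1(\cO_X) \xrightarrow{(f_1,f_2)} H^1(\cO_X(L_1))\oplus H^1(\cO_X(L_2)) \xrightarrow{\genfrac{(}{)}{0pt}{}{-f_2}{f_1}} H^1(\cO_X(L_3)) \to 0,
\]
where the final zero comes from $H^2$ vanishing on the (affine-related) resolution as in \lemref{l:inj}(3). Under $h^1(\cO_X(L_1)) = p_g(A)$, the first map is an isomorphism onto the first summand (as just argued), so its image is exactly $H^1(\cO_X(L_1))\oplus 0$. Hence the cokernel of $(f_1,f_2)$ is $H^1(\cO_X(L_2))$, and the right-hand surjection identifies $H^1(\cO_X(L_3)) \cong H^1(\cO_X(L_2))$, giving the equality of lengths in (2).

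The main obstacle I anticipate is not the cohomological bookkeeping but the input that multiplication by a general section induces a \emph{surjection} on $H^1$, together with the precise bound $h^1(\cO_X(L')) \le p_g(A)$ that makes "surjection of equal lengths equals isomorphism" work. This is essentially the content of \lemref{l:inj}(3) and the vanishing built into the affine setting, so I would isolate that comparison first and invoke \cite[Proposition 2.6]{OWYgood} for the construction of \eqref{eq: 2ndfund}; everything else is then a routine diagram chase through the long exact sequence.
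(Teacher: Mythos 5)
Your overall strategy coincides with the paper's: take the long exact cohomology sequence of \eqref{eq: 2ndfund} and combine it with \lemref{l:inj}~(3). Your argument for part (1) is correct and is a mild repackaging of what the paper does: multiplication by the general section $f_1$ gives a surjection $H^1(\cO_X)\to H^1(\cO_X(L_1))$ (this is exactly the mechanism behind \lemref{l:inj}~(3), applied with $L'=0$), a surjection of finite-length modules of equal length $p_g(A)$ is an isomorphism, so $H^1(\cO_X)\to H^1(\cO_X(L_1))\oplus H^1(\cO_X(L_2))$ is injective and the connecting map $\delta$ vanishes. The paper reaches the same point by a length count instead: exactness of $0\to \Img\delta\to H^1(\cO_X)\to H^1(\cO_X(L_1))\oplus H^1(\cO_X(L_2))\to H^1(\cO_X(L_1+L_2))\to 0$ gives, using $h^1(\cO_X)=h^1(\cO_X(L_1))=p_g(A)$, that $\ell_A(\Img\delta)=h^1(\cO_X(L_1+L_2))-h^1(\cO_X(L_2))$, which is $\le 0$ by \lemref{l:inj}~(3); this single computation yields (1) and (2) simultaneously.

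Two of your steps, however, are wrong as written. First, in part (2) you assert that the image of $(f_1,f_2)\colon H^1(\cO_X)\to H^1(\cO_X(L_1))\oplus H^1(\cO_X(L_2))$ is ``exactly $H^1(\cO_X(L_1))\oplus 0$.'' It is not: the map is $\xi\mapsto(f_1\xi,f_2\xi)$, and $f_2$ has no reason to annihilate $H^1(\cO_X)$; the image is the graph of $(\times f_2)\circ(\times f_1)^{-1}$, not the first summand. Your conclusion survives, but only via the graph description or, more simply, the length count: injectivity gives $\ell_A(\Img(f_1,f_2))=p_g(A)=h^1(\cO_X(L_1))$, hence $\ell_A(\Coker)=h^1(\cO_X(L_1))+h^1(\cO_X(L_2))-p_g(A)=h^1(\cO_X(L_2))$, and $H^1(\cO_X(L_1+L_2))\cong\Coker$ since $H^2$ of any coherent sheaf on $X$ vanishes---which is precisely the paper's computation. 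Second, your justification of the surjectivity in \eqref{eq: 2ndfund} proves the wrong thing: if $f_1,f_2$ form a regular sequence at a common zero, the Koszul complex is exact as a resolution of the structure sheaf of the nonempty common zero scheme $W$, so the skew map then has \emph{nonzero} cokernel $\cO_W(L_1+L_2)$ supported at those points. ``No common zero along $E$ in codimension one'' is therefore not enough; surjectivity requires the common zero locus to be empty. That is what genericity actually buys: every nonempty closed subset of $X$ contains a closed point of $X$, and all closed points of $X$ lie on $E$ (properness over the local base), so it suffices that a general $f_2$ avoid the finitely many zeros of $f_1$ on $E$, which global generation of $\cO_X(L_2)$ permits. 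Since you, like the paper, ultimately delegate the construction of \eqref{eq: 2ndfund} to \cite[Proposition 2.6]{OWYgood}, this second flaw lies in your sketch rather than in the logical chain, but both points need repair before the write-up is sound.
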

\begin{proof}
%{\color{red}
We only explain (1) and (2).
Assume $h^1(\cO_X(L_1)) = p_g(A)$. Then since 
$h^1(\cO_X(L_1+L_2))\le h^1(\cO_X(L_2))$
 by \lemref{l:inj} (3), 
we must have 
\begin{equation}\label{eq:h^1}
h^1(\cO_X) - (h^1(\cO_X(L_1))+ h^1(\cO_X(L_2))) + h^1(\cO_X(L_1+L_2)) = 0
\end{equation}
 and hence 
the map 
\[H^0(\cO_X(L_1))  \oplus H^0( \cO_X(L_2)) 
\xrightarrow{\genfrac{(}{)}{0pt}{}{-f_2}{f_1}}
H^0( \cO_X(L_1+L_2))\] is surjective. 
The claim (2) follows from the equality \eqref{eq:h^1}.
%}
\end{proof}

\subsection{The cohomological cycle and the canonical module}\label{ss:coh}
\phantom{AAAAAAA}\par 

There exists the minimum $\coh\ge 0$ of the cycles $C\ge 0$ on $X$ satisfying $h^1(\cO_C)=p_g(A)$; where $h^1(\cO_C)=0$ for $C=0$ (cf. \cite[4.8]{chap}).
We call $\coh$ the {\em cohomological cycle} on $X$.
Note that  $A$ is Gorenstein if and only if $K_X$ is linearly equivalent to a cycle because $X\setminus E = \spec(A)\setminus\{\m\}$, and that 
if $A$ is Gorenstein and $\pi$ is the minimal resolution, then $K_X\sim - \coh$ (\cite[4.20]{chap}), where $Z\sim Z'$ denotes that the divisors $Z$ and $Z'$ are 
linearly equivalent, 
%{\color{red}
or equivalently, %}
$\cO_X(Z) \cong \cO_X(Z')$.

\begin{lem}\label{l:coh}
 Assume that  $L$ has no fixed component and 
 that a cycle $D$ satisfies $D\ge \coh$. Then $H^1(\cO_X(L)) \cong H^1(\cO_{D}(L))$.
\end{lem}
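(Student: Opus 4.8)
The plan is to extract the statement directly from the commutative diagram $\cd(L,\omega,D)$, with the hypothesis $D\ge\coh$ entering only to collapse one cohomology map to an isomorphism. The middle column of $\cd(L,\omega,D)$ is the exact sequence
\[
0 \to \cO_X(L-D) \to \cO_X(L) \to \cO_D(L) \to 0 .
\]
Since the fibers of $\pi\colon X\to \spec(A)$ are at most one-dimensional, $H^2$ of every coherent sheaf on $X$ vanishes, so the cohomology sequence ends as $H^1(\cO_X(L-D)) \xrightarrow{\iota} H^1(\cO_X(L)) \xrightarrow{\rho} H^1(\cO_D(L)) \to 0$, where $\iota$ is induced by the inclusion $\cO_X(L-D)\hookrightarrow\cO_X(L)$. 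Thus $\rho$ is already surjective, and it suffices to prove that $\iota$ is the zero map; equivalently, that $\rho$ is injective, which then forces $\rho$ to be the desired isomorphism.

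First I would pin down the role of $D\ge\coh$. By definition $\coh$ is the least cycle with $h^1(\cO_{\coh})=p_g(A)$, and for $0\le\coh\le D$ the standard surjections $H^1(\cO_X)\twoheadrightarrow H^1(\cO_D)\twoheadrightarrow H^1(\cO_{\coh})$ (arising from $0\to\cO_X(-D)\to\cO_X\to\cO_D\to 0$ and from $0\to\cO_{D-\coh}(-\coh)\to\cO_D\to\cO_{\coh}\to 0$, again using $H^2=0$) sandwich $h^1(\cO_D)$ between $p_g(A)$ and $h^1(\cO_X)=p_g(A)$, giving $h^1(\cO_D)=p_g(A)=h^1(\cO_X)$. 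Hence the surjection $H^1(\cO_X)\to H^1(\cO_D)$ is an isomorphism of spaces of equal dimension, and exactness of $H^1(\cO_X(-D))\to H^1(\cO_X)\to H^1(\cO_D)\to 0$ shows that the map $a\colon H^1(\cO_X(-D))\to H^1(\cO_X)$ is zero.

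Next I bring in the top row of $\cd(L,\omega,D)$, namely $0\to\cO_X(-D)\to\cO_X(L-D)\to\cU(-D)\to 0$. Because $H^1(\cU(-D))=0$ by \lemref{l:cdU}, the map $H^1(\cO_X(-D))\to H^1(\cO_X(L-D))$ induced by multiplication by $\omega$ is surjective. Now the upper-left square of $\cd(L,\omega,D)$—with $\times\omega$ along the two rows and the inclusions $\cO_X(-D)\hookrightarrow\cO_X$, $\cO_X(L-D)\hookrightarrow\cO_X(L)$ down the columns—commutes at the sheaf level, hence in cohomology. A short chase finishes it: every class in $H^1(\cO_X(L-D))$ is the image of some class in $H^1(\cO_X(-D))$ under the surjective top map, so its image under $\iota$ equals its image under the composite $H^1(\cO_X(-D))\xrightarrow{a} H^1(\cO_X)\xrightarrow{\times\omega} H^1(\cO_X(L))$, whose first arrow is $a=0$. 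Therefore $\iota=0$ and $\rho$ is the required isomorphism.

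The only substantive inputs are two vanishings and one identification, and I expect $h^1(\cO_D)=p_g(A)$ to be the point requiring the most care: it rests on the monotonicity of $h^1(\cO_D)$ in $D$ together with the defining minimality of $\coh$, and it is precisely this equality that turns $H^1(\cO_X)\to H^1(\cO_D)$ into an isomorphism and hence kills $a$. By contrast, $H^1(\cU(-D))=0$ is already supplied by \lemref{l:cdU}, and $H^2=0$ on a resolution with one-dimensional fibers is routine, so neither of these should pose an obstacle.
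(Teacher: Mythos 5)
Your proof is correct and is essentially the paper's own argument: both run the cohomology diagram attached to $\cd(L,\omega,D)$, use $H^1(\cU(-D))=0$ to make $H^1(\cO_X(-D))\to H^1(\cO_X(L-D))$ surjective, and then kill the map $H^1(\cO_X(L-D))\to H^1(\cO_X(L))$ by the commutative square once $H^1(\cO_X)\to H^1(\cO_D)$ is seen to be an isomorphism, forcing $H^1(\cO_X(L))\to H^1(\cO_D(L))$ to be bijective. The only cosmetic difference is that the paper first reduces to the case $D=\coh$ via the surjections $\cO_X(L)\to\cO_D(L)\to\cO_{\coh}(L)$, whereas you treat general $D\ge\coh$ directly by sandwiching $h^1(\cO_D)$ between $h^1(\cO_{\coh})=p_g(A)$ and $h^1(\cO_X)=p_g(A)$.
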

\begin{proof}
Since we have the surjections $\cO_X(L) \to \cO_D(L) \to \cO_{\coh}(L)$, we may assume that $D=\coh$.
By \lemref{l:cdU}, all vertical and horizontal sequences in the following commutative diagram are exact:
\[
\begin{CD}
 H^0(\cU(-D)) @>>> H^1(\cO_X(-D))  @>>>  H^1(\cO_X(L-D)) @>>>  0 \\
@VVV @VV{\alpha}V @VVV \\
  H^0(\cU) @>>> H^1(\cO_X) @>>> H^1(\cO_X(L)) @>>> 0 \\
@VVV @VV{\beta}V @VV{\gamma}V \\
 H^0(\cU_D) @>>> H^1(\cO_{D}) @>>> H^1(\cO_{D}(L))  @>>> 0 \\
@VVV @VVV @VVV \\
 0 @. 0 @. 0 @. \\
\end{CD}
\]
Since $\beta$ is an isomorphism by the assumption $D=\coh$,  $\alpha $ is trivial, and thus $\gamma$  is also an isomorphism.
\end{proof}

Recall that $K_A$ denotes the canonical module of $A$.
Since $K_A$ is reflexive and $X\setminus E \cong \spec(A) \setminus \{\m\}$, we have $K_A=H^0(X\setminus E, \cO_X(K_X))$. 
%Thus we have the exact sequence
%\[
%0 \to H^0(\cO_X(K_X)) \to K_A \to H^1_E(\cO_X(K_X)) \to H^1(\cO_X(K_X)).
%\]
%By the duality and the Grauert-Riemenschneider vanishing theorem 
% (cf. \cite{gi}), 
%we have the following.
\begin{prop}
[cf. {\cite[Theorem 3.4 and 3.5]{la.rat}, \cite[4.4]{karras} }]
\label{p:KArat}
We have 
\[
p_g(A)=\ell_A(K_A/H^0(\cO_X(K_X)).
\]
In particular, $K_A=H^0(\cO_X(K_X))$ if $A$ is a rational singularity.
\end{prop}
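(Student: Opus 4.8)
The plan is to compare the global sections of $\cO_X(K_X)$ over $X$ with those over the punctured space $X\setminus E$ by means of the local cohomology sequence supported on $E$, and then to identify the resulting defect with $p_g(A)$ through duality. Writing $\omega_X:=\cO_X(K_X)$ (the dualizing sheaf of the regular surface $X$), I would begin from the exact sequence of local cohomology along $E$,
\[
0\to H^0_E(\omega_X)\to H^0(X,\omega_X)\to H^0(X\setminus E,\omega_X)\to H^1_E(\omega_X)\to H^1(X,\omega_X).
\]
Since $\omega_X$ is invertible on the integral scheme $X$, it is torsion free, so $H^0_E(\omega_X)=0$; moreover $H^0(X\setminus E,\omega_X)=K_A$ by the identification recalled just before the statement, while $H^0(X,\omega_X)=H^0(\cO_X(K_X))$. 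Thus the sequence already realizes $H^0(\cO_X(K_X))$ as a submodule of $K_A$ and exhibits $K_A/H^0(\cO_X(K_X))$ as the kernel of the map $H^1_E(\omega_X)\to H^1(X,\omega_X)$.

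The next input is Grauert--Riemenschneider vanishing, $H^1(X,\omega_X)=R^1\pi_*\omega_X=0$, valid for a resolution of a normal surface singularity. Granting it, the connecting map is surjective onto $H^1_E(\omega_X)$ and we obtain a canonical isomorphism
\[
K_A/H^0(\cO_X(K_X))\;\cong\;H^1_E(\omega_X).
\]
To compute the length of the right-hand side I would invoke the formal (relative Grothendieck) duality attached to the proper morphism $\pi$: applied to the invertible sheaf $\omega_X$ it yields a perfect pairing identifying $H^1_E(\omega_X)$ with the Matlis dual of $H^1(X,\cO_X)$, since $\mathcal{H}om_{\cO_X}(\omega_X,\omega_X)=\cO_X$. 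As Matlis duality preserves finite length and $H^1(X,\cO_X)$ has finite length (it is $R^1\pi_*\cO_X$, supported only at $\m$), this gives
\[
\ell_A\bigl(K_A/H^0(\cO_X(K_X))\bigr)=\ell_A\bigl(H^1_E(\omega_X)\bigr)=\ell_A\bigl(H^1(X,\cO_X)\bigr)=h^1(\cO_X)=p_g(A).
\]
The last assertion is then immediate: if $A$ is rational, $p_g(A)=0$, so the quotient vanishes and $K_A=H^0(\cO_X(K_X))$.

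The delicate part is the duality input, not the bookkeeping. First, Grauert--Riemenschneider vanishing must be available in arbitrary characteristic; for surfaces this is classical, but since the base field is only assumed algebraically closed it should be cited carefully (this is precisely the content underlying the references to Laufer and Karras). Second, the exact form of the pairing $H^1_E(\omega_X)\cong H^1(X,\cO_X)^\vee$ requires either relative Grothendieck duality for $\pi$ combined with local duality on $\spec A$, or the theorem on formal functions, and one must check it is an isomorphism of \emph{finite-length} $A$-modules so that the two lengths really coincide. An alternative that packages both inputs at once is to apply relative duality directly to $R\pi_*\cO_X$, whose only cohomology sheaves are $A$ in degree $0$ and the finite-length module $H^1(\cO_X)$ in degree $1$: the spectral sequence computing $R\mathcal{H}om_A(R\pi_*\cO_X,K_A)$ then has a single nontrivial differential $K_A\to H^1(\cO_X)^\vee$ whose surjectivity is again Grauert--Riemenschneider vanishing and whose image identifies $K_A/H^0(\cO_X(K_X))$ with $H^1(\cO_X)^\vee$. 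Either way the crux is the duality, after which the length count is purely formal.
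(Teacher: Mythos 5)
Your proof is correct. The paper itself gives no argument for this proposition---it is quoted from Laufer and Karras---and your route (the local cohomology sequence along $E$, Grauert--Riemenschneider vanishing $R^1\pi_*\omega_X=0$ valid for surfaces in any characteristic, and the formal duality $H^1_E(\omega_X)\cong H^1(X,\cO_X)^\vee$) is exactly the argument underlying those cited references, with the two delicate inputs correctly identified and handled.
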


\begin{defn}
Let $C_X$ denote the minimum of the cycles $C$ such that $K_A=H^0(X,\cO_X(K_X+C))$.
\end{defn}

\begin{rem}\label{r:CX}
We have the following.
\begin{enumerate}
\item If $C_X=C_1-C_2$, where $C_1$ and $C_2$ are effective cycles without common components, then $C_1=\coh$ (cf. \cite[4.3]{ORWY}).
\item $\cO_X(K_X+C_X)$ has no fixed component.
In fact, if $H^0(\cO_X(K_X+C_X))\to H^0(\cO_{E_i}(K_X+C_X))$ is trivial for some $E_i$, then $H^0(\cO_X(K_X+C_X-E_i))=H^0(\cO_X(K_X+C_X))$; it contradicts the minimality of $C_X$. 
In particular, $K_X+C_X$ is nef.
\end{enumerate}
\end{rem}

There exists a $\Q$-divisor $Z_{K_X}$ supported in $E$ such that $K_X+Z_{Z_X}\equiv 0$, i.e., $(K_X+Z_{K_X})E_i= 0$ for every $E_i\subset E$. 
% Since $K_A = H^0(X\setminus E, \cO_X(K_X))$, 
%{\color{red} 
 As mentioned above,
%}
$A$ is Gorenstein if and only if $K_X$ is linearly equivalent to a cycle.
In fact, $A$ is Gorenstein if and only if $K_X\sim -Z_{K_X}$.  
The following proposition shows that $C_X=Z_{K_X}$ when $A$ is Gorenstein. %}

\begin{prop}\label{p:GCX}
 $A$ is Gorenstein if and only if $\cO_X(K_X+C_X)\cong \cO_X$.
\end{prop}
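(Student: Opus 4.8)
The plan is to establish the two implications separately, using as the main leverage the criterion recalled just above that $A$ is Gorenstein if and only if $K_X$ is linearly equivalent to a cycle. The direction $(\Leftarrow)$ is immediate: if $\cO_X(K_X+C_X)\cong\cO_X$ then $K_X+C_X\sim 0$, so $K_X\sim -C_X$; since $C_X$ is a cycle (a divisor supported on $E$), so is $-C_X$, whence $K_X$ is linearly equivalent to a cycle and $A$ is Gorenstein.

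For $(\Rightarrow)$ the idea is to exploit that the canonical module becomes free of rank one. Assuming $A$ Gorenstein, I would write $K_A=A\omega$ for a single generator $\omega$. By the defining property of $C_X$ we have $H^0(X,\cO_X(K_X+C_X))=K_A=A\omega$, and by the standing hypothesis the invertible sheaf $\cO_X(K_X+C_X)$ is generated by its global sections. Since every global section is an $A$-multiple of $\omega$, the surjection $H^0(\cO_X(K_X+C_X))\otimes_A\cO_X\to\cO_X(K_X+C_X)$ factors through the subsheaf $\cO_X\cdot\omega$; global generation then forces $\omega$ to generate the stalk at every point of $X$. A nowhere-vanishing section of an invertible sheaf trivializes it, so $\omega$ yields the desired isomorphism $\cO_X\xrightarrow{\sim}\cO_X(K_X+C_X)$.

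I expect the forward implication to be the only real obstacle, and within it the delicate point is the passage from ``globally generated with cyclic space of sections'' to ``trivial'': one must observe that, because all sections are $A$-multiples of the single generator $\omega$, global generation is unattainable unless $\omega$ is itself nowhere vanishing. A more computational alternative, matching the remark that $C_X=Z_{K_X}$ in the Gorenstein case, would start from $K_X\sim -Z_{K_X}$, verify $K_A=H^0(\cO_X(K_X+Z_{K_X}))$ to obtain $C_X\le Z_{K_X}$ by minimality, and then argue the reverse inequality; this, however, requires pinning down $C_X$ exactly, so I would favor the global-generation argument.
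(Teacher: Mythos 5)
Your $(\Leftarrow)$ implication coincides with the paper's and is fine. The problem is in $(\Rightarrow)$: you appeal to a ``standing hypothesis'' that $\cO_X(K_X+C_X)$ is generated by its global sections, but no such hypothesis is in force where this proposition is stated. The proposition sits in the Preliminaries and concerns an arbitrary resolution $X$; the only information available there is \remref{r:CX}~(2), namely that $\cO_X(K_X+C_X)$ has no fixed component, which still permits isolated base points. Global generation can genuinely fail on a given resolution --- that is exactly why the \emph{next} result, \proref{p:KAfree}, proves that \emph{some} resolution achieves it (its proof begins by assuming base points exist and passing to a log-resolution), and why the paper imposes global generation as a standing assumption only from \sref{s:Ugg} onward. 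Your argument is correct \emph{given} that hypothesis (a cyclic section module $K_A=A\omega$ together with global generation forces $\omega$ to be nowhere vanishing, hence to trivialize the line bundle), but as written it proves only the weaker statement for those special resolutions, not the proposition as stated; citing \proref{p:KAfree} would not repair this, both because it comes later and because it produces a different resolution from the given one.

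The paper's proof of $(\Rightarrow)$ needs only the no-fixed-component property: Gorensteinness gives $K_X\sim W$ for some cycle $W$, so $\cO_X(K_X+C_X)\cong \cO_X(-Z)$ with $Z:=-(W+C_X)$ a cycle; nefness of $K_X+C_X$ (\remref{r:CX}~(2)) makes $Z$ anti-nef, hence $Z\ge 0$; and if $Z\ne 0$, then $Z\ge Z_f\ge E$, so $K_A\cong H^0(\cO_X(-Z))\subseteq \m$ would be an $\m$-primary ideal, which cannot happen since $K_A\cong A$ and an $\m$-primary ideal of a two-dimensional normal domain is never principal. To close the gap you would have to replace the global-generation step by an argument of this nef/anti-nef type (or by an equivalent use of the negative-definite intersection form); note that your suggested fallback via $C_X=Z_{K_X}$ also still requires such input to obtain the equality of cycles, since only the inequality $C_X\le Z_{K_X}$ comes cheaply from minimality.
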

\begin{proof}
Assume that $A$ is Gorenstein. 
Since $K_X$ is linearly equivalent to a cycle, it follows from \remref{r:CX} (2) that there exists an anti-nef cycle $Z\ge 0$ such that $\cO_X(K_X+C_X)\cong \cO_X(-Z)$.  If $Z\ne 0$, then 
%{\color{red}
$Z\ge Z_f \ge E$
%}
 by the definition of $Z_f$ and $H^0(\cO_X(-Z))\subset \m$; it contradicts that $K_A\cong A$. %}
If $\cO_X(K_X+C_X)\cong \cO_X$, then $K_X$ is linearly equivalent to the cycle $-C_X$
 and hence $A$ is Gorenstein. 
\end{proof}

\begin{prop}\label{p:KAfree}
There exists a resolution $\pi\: X\to \spec(A)$ such that $\cO_X(K_X+C_X)$ is generated.
%{\color{red}
If $\phi\: \ol X \to X$ is the blowing up at a point with exceptional set $F$ and 
$\cO_X(K_X+C_X)$ is generated, then $\cO_{\ol X}(K_{\ol X}+C_{\ol X})$ is also generated and $C_{\ol X}=\phi^*C_X-F$.
%}
\end{prop}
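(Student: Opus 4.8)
The plan is to deduce both assertions from one elementary observation about generated invertible sheaves, which I will call the \emph{forcing principle}: if $\cG$ is an invertible sheaf on a resolution that is generated by its global sections and $\cS\subseteq\cG$ is an invertible subsheaf with $H^0(\cS)=H^0(\cG)$, then $\cS=\cG$. Indeed, writing $V=H^0(\cG)=H^0(\cS)$, the evaluation map $V\otimes_k\cO\to\cS$ makes sense (the sections lie in $\cS$), and its composite with $\cS\hookrightarrow\cG$ is the evaluation map of $\cG$, which is surjective; hence $\cS\hookrightarrow\cG$ is surjective, i.e. $\cS=\cG$. I will apply this with $\cG$ a suitable pullback (or transform) of $\cO_X(K_X+C_X)$ and $\cS=\cO_{X'}(K_{X'}+C_{X'})$, using that $C_{X'}$ is by definition the minimum cycle with $H^0(\cO_{X'}(K_{X'}+C_{X'}))=K_A$, so that any cycle $C'$ with $H^0(\cO_{X'}(K_{X'}+C'))=K_A$ satisfies $C_{X'}\le C'$ and hence $\cO_{X'}(K_{X'}+C_{X'})\subseteq\cO_{X'}(K_{X'}+C')$.

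For the second assertion I set $\cG=\phi^*\cO_X(K_X+C_X)$. Since $\cO_X(K_X+C_X)$ is generated, so is its pullback $\cG$, and since $\phi_*\cO_{\ol X}=\cO_X$ the projection formula gives $H^0(\ol X,\cG)=H^0(X,\cO_X(K_X+C_X))=K_A$. Using $K_{\ol X}=\phi^*K_X+F$ I rewrite $\cG=\cO_{\ol X}(\phi^*(K_X+C_X))=\cO_{\ol X}(K_{\ol X}+(\phi^*C_X-F))$; in particular the cycle $C'=\phi^*C_X-F$ satisfies $H^0(\cO_{\ol X}(K_{\ol X}+C'))=K_A$, so $C_{\ol X}\le C'$ and thus $\cS:=\cO_{\ol X}(K_{\ol X}+C_{\ol X})\subseteq\cG$. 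As $H^0(\cS)=K_A=H^0(\cG)$ and $\cG$ is generated, the forcing principle yields $\cS=\cG$, that is $C_{\ol X}=\phi^*C_X-F$ and $\cO_{\ol X}(K_{\ol X}+C_{\ol X})=\cG$ is generated.

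For the first assertion I begin with any resolution and consider the base ideal $\frb\subseteq\cO_X$ of the system $(\cO_X(K_X+C_X),K_A)$, so that the image of the evaluation map is $\frb\cdot\cO_X(K_X+C_X)$. By \remref{r:CX}(2) the system has no fixed component, and over $X\setminus E\cong\spec(A)\setminus\{\m\}$ the sheaf is $\widetilde{K_A}$, which is generated by $K_A$; hence the cosupport of $\frb$ is a finite set of smooth points of $E$. Since $X$ is a regular excellent surface over $k$, I can principalize $\frb$ by a finite sequence of point blow-ups $\psi\colon X'\to X$, giving $\frb\cO_{X'}=\cO_{X'}(-Z)$ for an effective exceptional cycle $Z$. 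Then the image of the evaluation map on $X'$ is $\cG:=\psi^*\cO_X(K_X+C_X)\otimes\cO_{X'}(-Z)$, an invertible sheaf generated by $K_A$ with $H^0(\cG)=K_A$ (as $K_A\subseteq H^0(\cG)\subseteq H^0(\psi^*\cO_X(K_X+C_X))=K_A$). Writing $K_{X'}=\psi^*K_X+R$ with $R\ge0$ exceptional, I obtain $\cG=\cO_{X'}(K_{X'}+C')$ with $C'=\psi^*C_X-R-Z$; so $C'$ is admissible, $C_{X'}\le C'$, $\cO_{X'}(K_{X'}+C_{X'})\subseteq\cG$, and the forcing principle gives $C_{X'}=C'$ with $\cO_{X'}(K_{X'}+C_{X'})=\cG$ generated.

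The clean heart of the argument is the forcing principle combined with the projection-formula identity $H^0(\phi^*\cO_X(K_X+C_X))=K_A$. The step demanding the most care — and the main obstacle — is the existence part: one must know that the finite base locus of $(\cO_X(K_X+C_X),K_A)$ can be resolved by blowing up points without disturbing the canonical bookkeeping. I handle both at once by phrasing it as principalization of the ideal $\frb$ on the regular excellent surface $X$ (the classical elimination of base points), and by reading off $C'=\psi^*C_X-R-Z$ so that the transformed sheaf is $\cO_{X'}(K_{X'}+C')$; the forcing principle then automatically upgrades $C'$ to the minimal cycle $C_{X'}$, which is exactly what the statement asserts.
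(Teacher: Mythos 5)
Your proof is correct and takes essentially the same route as the paper: both principalize the base ideal of $(\cO_X(K_X+C_X),K_A)$ by blowing up (the paper's log-resolution of $J$, your point blow-ups), identify the resulting invertible sheaf $K_A\cO_{X'}$ as $\cO_{X'}(K_{X'}+C')$ for an explicit cycle $C'$, and invoke the minimality in the definition of $C_{X'}$ (your ``forcing principle'') to get both $C_{X'}=C'$ and generation; the blow-up case via $\phi^*\cO_X(K_X+C_X)=\cO_{\ol X}(K_{\ol X}+\phi^*C_X-F)$ is handled identically in both. Your write-up simply makes explicit the sandwich argument $\cO_{X'}(K_{X'}+C_{X'})\subseteq K_A\cO_{X'}\subseteq \cO_{X'}(K_{X'}+C_{X'})$ that the paper leaves implicit.
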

\begin{proof}
Assume that $\cO_X(K_X+C_X)$ has base points.
Let 
$J \subset \cO_X$ be the ideal sheaf %}
of the base points; that is, $J$ satisfies $K_A \cO_X = J \cO_X(K_X+C_X)$. 
By \remref{r:CX} (2), $\supp(\cO_X/J)$ is a finite set.
Let $X' \to X$ be a log-resolution of the ideal $J$.
Then $\cO_{X'}(K_{X'}+C_{X'})$ is generated since 
$K_A\cO_{X'}$ is invertible and $H^0(\cO_{X'}(K_{X'}+C_{X'})) = K_A$.
%{\color{red}
The second claim follows from that $\phi^*\cO_X(K_X+C_X)=\cO_{\ol X}(K_{\ol X}-F+\phi^*C_X)$ is generated (cf. \remref{r:CX}).
%}
\end{proof}

\section{The case when $U = H^0(\cU)$; Cases when $A$ is rational or elliptic.}
\label{s:Ugg}

In this section we prove that $A$ is almost Gorenstein if 
%{\color{red}
we have $U = H^0(X, \cU)$ in the Table \ref{tab:KD} below, 
%}
 or, equivalently, $H^0(\al_0)$ is surjective. 
We can also show that several important classes of singularities including  rational and
 elliptic singularities satisfy this condition.  

\bigskip 

As we have shown in Proposition \ref{p:KAfree}, we can take a resolution of $\Spec(A)$ 
so that $\cO_X( K_X + C_X)$ is generated by $H^0(\cO_X( K_X + C_X)) = K_A$
and $\m\cO_X$ is invertible.
 In this section, 
we always assume that our resolution $X$ satisfies these conditions and $\m$ is represented by a cycle $Z$ on $X$; 
%}
that is $H^0(\cO_X( -Z)) = \m$ and $\m \cO_X= \cO_X(-Z)$.

Putting $L = K_X + C_X$ and $D = Z $ in Table \ref{tab:CD}, 
we get the %following 
%{\color{red}
diagram in Table \ref{tab:KD}. 
%}
Note that $L = K_X + C_X$ and $D = Z $ satisfy the conditions of \sref{ss:exactseq}.

%We can give an explicit condition about the choice of the element $\omega \in K_A$. 
%{\color{red}
Let us recall the key points of the choice of the element $\omega \in K_A$ (see \ssref{ss:exactseq}).

\begin{rem}\label{w:choice} We chose a \lq\lq general" element $\omega \in K_A$. But we notice that 
our argument depends only on the fact that the support of $\cU$ is affine. This is equivalent 
to say that $\omega$ generates $\cO_X(K_X+C_X)$ at the generic point of every irreducible exceptional 
curve $E_i$ on $X$.   
\end{rem}
%}

\begin{table}[htb]
\[
\begin{CD}
@. 0 @. 0 @. 0 @. \\
@. @VVV @VVV @VVV \\
0 @>>> \cO_X(-nZ) @>{\times \omega}>> \cO_X(K_X+C_X-nZ) @ >{\alpha_n}>> \cU (-nZ) @>>> 0 \\
@. @VVV @VVV @VVV \\
0 @>>> \cO_X @>{\times \omega}>> \cO_X(K_X+C_X) @ >{\alpha_0}>> \cU @>>> 0  \\
@. @VVV @VVV @VVV \\
0 @>>> \cO_{nZ} @>{\times \omega}>>\cO_{nZ}(K_X+C_X) @ >>> \cU_{nZ} @>>> 0 \\
@. @VVV @VVV @VVV \\
@. 0 @. 0 @. 0 @. \\
\end{CD}
\]
\smallskip

\caption{\label{tab:KD} $\cd(K_X+C_X,\omega,nZ)$}
\end{table} 

\begin{thm}\label{th;Ugg} If $U = H^0(X, \cU)$, or, equivalently, if $H^0(\alpha_0)$ is 
surjective, then $A$ is almost Gorenstein. 
\end{thm}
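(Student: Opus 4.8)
The plan is to read the required exact sequence straight off the diagram in Table~\ref{tab:KD}. Taking global sections of the middle row $0\to\cO_X\xrightarrow{\times\omega}\cO_X(K_X+C_X)\to\cU\to 0$ gives $0\to A\to K_A\xrightarrow{H^0(\alpha_0)}H^0(\cU)$, whose image is exactly $U=K_A/\omega A$; the hypothesis that $H^0(\alpha_0)$ is surjective is precisely $U=H^0(\cU)$, so we obtain $0\to A\xrightarrow{\times\omega}K_A\to U\to 0$. Since $A$ is a two-dimensional normal (hence Cohen--Macaulay) domain and $K_A$ is maximal Cohen--Macaulay, the depth lemma gives $\depth U\ge 1$, while $U$ is torsion because $A$ and $\omega A\cong A$ both have rank one in $K_A$; thus $\dim U\le 1$, and $U=0$ or $U$ is Cohen--Macaulay of dimension one (cf. \cite[Lemma 3.1]{GTT}). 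It then remains only to show that $U$ is Ulrich, i.e. $\mu(U)=e_0(U)$.

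I would first dispose of the easy half. For a one-dimensional Cohen--Macaulay module one always has $\mu(U)\le e_0(U)$, since $xU\subseteq\m U$ for a minimal reduction $x$. For a matching lower bound on $\mu$, note that $\m\cO_X=\cO_X(-Z)$ forces $\m U=\m\,H^0(\cU)\subseteq H^0(\m\cU)=H^0(\cU(-Z))$; combined with \lemref{l:cdU} (taking $L=K_X+C_X$ and $D=Z$), which gives $\ell_A\bigl(H^0(\cU)/H^0(\cU(-Z))\bigr)=h^0(\cU_Z)=(K_X+C_X)Z$, this yields $\mu(U)\ge(K_X+C_X)Z$. Hence it suffices to prove $e_0(U)=(K_X+C_X)Z$, for then the chain $(K_X+C_X)Z\le\mu(U)\le e_0(U)=(K_X+C_X)Z$ collapses and $U$ is Ulrich, so $A$ is almost Gorenstein.

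The heart of the matter is thus the multiplicity computation $e_0(U)=(K_X+C_X)Z$. Choose $x\in\m$ general, so that $x$ is a parameter (minimal reduction) for the one-dimensional Cohen--Macaulay module $U$; then $e_0(U)=\ell_A(U/xU)$. As $\times x$ is injective on $U$ and $H^1(\cU)=0$ by \lemref{l:cdU}, the sequence $0\to\cU\xrightarrow{\times x}\cU\to\cU/x\cU\to 0$ identifies $U/xU$ with $H^0(\cU/x\cU)=\cU/x\cU$, giving $e_0(U)=\ell_A(\cU/x\cU)$. Because $\cO_X(-Z)=\m\cO_X$ is globally generated, a general $x$ satisfies $\divv(x)=Z+\Gamma_x$ whose horizontal part $\Gamma_x$ misses the finitely many points of $\supp(\cU)\cap E$; near that finite set $\divv(x)=Z$, so $\cU/x\cU\cong\cU_Z$ and $\ell_A(\cU/x\cU)=h^0(\cU_Z)=(K_X+C_X)Z$, once more by \lemref{l:cdU}. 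This gives the desired equality.

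The main obstacle is the last step: checking that one general $x\in\m$ is simultaneously a parameter for $U$ and a local generator of $\cO_X(-Z)$ along $\supp(\cU)\cap E$, so that the horizontal part of $\divv(x)$ is disjoint from $\supp(\cU)$ there and $\cU/x\cU\cong\cU_Z$. Both are generic conditions—global generation of $\cO_X(-Z)$ lets us avoid the finite set $\supp(\cU)\cap E$—but the bookkeeping separating the exceptional and horizontal parts of $\divv(x)$ has to be handled with care. Should that geometric argument become awkward, I would instead compute $e_0(U)$ by comparing the filtrations $\{\m^nU\}$ and $\{H^0(\cU(-nZ))\}$: \lemref{l:cdU} gives $\ell_A\bigl(U/H^0(\cU(-nZ))\bigr)=n(K_X+C_X)Z$, and since $H^0(\cO_X(-nZ))=\ol{\m^n}$ makes $\bigoplus_n H^0(\cO_X(-nZ))$ module-finite over the Rees algebra of $\m$, the two filtrations have the same multiplicity, whence $e_0(U)=\lim_n \ell_A\bigl(U/H^0(\cU(-nZ))\bigr)/n=(K_X+C_X)Z$.
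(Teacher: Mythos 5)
Your argument is correct, but it takes a genuinely different route from the paper's proof. The paper works entirely cohomologically: from $U=H^0(\cU)$ it deduces $h^1(\cO_X(K_X+C_X))=p_g(A)$, then feeds this into the Koszul-type sequence of \lemref{l:2ndfund} (through Lemma~\ref{K-nZ}) to conclude both that $H^0(\alpha_n)$ is surjective and that $\Img H^0(\alpha_n)=\ol{\m^n}U$, i.e.\ the exact identification $H^0(\cU(-nZ))=\ol{\m^n}U$ for all $n$; both invariants then fall out of the single formula $\ell_A(U/\ol{\m^n}U)=n(K_X+C_X)Z$, with $\mu(U)$ the case $n=1$ and $e_0(U)$ the limit. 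You never prove such an identification: you bound $\mu(U)$ from below by $(K_X+C_X)Z$ using only the trivial inclusion $\m U\subseteq H^0(\cU(-Z))$ together with \lemref{l:cdU}, you compute $e_0(U)=(K_X+C_X)Z$ by slicing with a general $x\in\m$ so that $\divv_X(x)=Z+\Gamma_x$ and $\cU/x\cU\cong\cU_Z$, and then you sandwich $(K_X+C_X)Z\le\mu(U)\le e_0(U)=(K_X+C_X)Z$. This avoids \lemref{l:2ndfund} altogether and is more geometric and elementary; the price is that you obtain only $\mu(U)=e_0(U)$, whereas the paper's route also yields the exact formula $\ell_A(U/\ol{\m^n}U)=n(K_X+C_X)Z$ quoted in \thmref{t:elliptic} and sets up the $\eta_n$-machinery reused in \sref{s:mult} when $U\ne H^0(\cU)$. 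Two remarks on your details. First, the genericity bookkeeping you flag does close up; the one observation to make explicit is that every closed point of $X$ lies on $E$ (because $\spec(A)$ has a unique closed point), so $\Gamma_x\cap\supp(\cU)$ is automatically contained in the finite set $\supp(\cU)\cap E$, which a general $x$ avoids since $\cO_X(-Z)$ is generated by $\m$; injectivity of $\times x$ on $\cU$ and the fact that $x$ is a parameter for $U$ follow from prime avoidance. Second, your fallback filtration argument has a gap as stated: finiteness of $\bigoplus_n\ol{\m^n}$ over the Rees algebra compares $\{\m^nU\}$ with $\{\ol{\m^n}U\}$, not with $\{H^0(\cU(-nZ))\}$. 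To repair it, note that $\m\otimes_A\cU(-nZ)\to\cU(-(n+1)Z)$ is a surjection of sheaves supported on the affine curve $\supp(\cU)$, so taking $H^0$ preserves surjectivity and gives $H^0(\cU(-(n+1)Z))=\m\, H^0(\cU(-nZ))$, hence $H^0(\cU(-nZ))=\m^nU$ outright --- an identity which, combined with \lemref{l:cdU}, would in fact shorten your whole proof.
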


Before proving \thmref{th;Ugg}, we show a lemma which follows directly from Lemma \ref{l:2ndfund}; note that $H^0(\cO_X(-nZ))=\ol{\m^n}$, where $\ol{\m^n}$ denotes the integral closure of $\m^n$.

\begin{lem}\label{K-nZ}   
If  $h^1(\cO_X(K_X+C_X))= p_g(A)$ or $h^1( \cO_X(-Z)) = p_g(A)$ holds, then for 
every integer $n\ge 1$ we have $H^0(K_X+C_X - nZ) = \ol{\m^n} K_A$ and $\Image(\al_n) = 
\overline{\m^n}U$.  
\end{lem}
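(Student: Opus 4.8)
The plan is to prove \lemref{K-nZ} by applying \lemref{l:2ndfund} inductively with the two fixed divisors $L_1 = -Z$ (representing $\m$) and $L_2 = K_X + C_X$. First I would record the hypothesis of \lemref{l:2ndfund}: both $\cO_X(-Z)$ and $\cO_X(K_X+C_X)$ must be generated by global sections. The former holds because $\m\cO_X = \cO_X(-Z)$ is invertible and $\m$ is represented by $Z$, so $\cO_X(-Z)$ is globally generated; the latter is precisely the standing assumption of this section (see \proref{p:KAfree}). Moreover $H^0(\cO_X(-Z)) = \m$ and $H^0(\cO_X(K_X+C_X)) = K_A$, while $H^0(\cO_X(-nZ)) = \overline{\m^n}$ as noted just before the statement.

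The core inductive step would go as follows. Suppose the hypothesis $h^1(\cO_X(K_X+C_X)) = p_g(A)$ holds (the argument in the other case is symmetric, swapping the roles of the two divisors). I would apply \lemref{l:2ndfund}(1) with $L_1 = K_X+C_X$ and $L_2 = -nZ$; its part (2) gives the crucial stability $h^1(\cO_X(K_X+C_X-nZ)) = h^1(\cO_X(-nZ))$, which I would need in order to keep the hypothesis of \lemref{l:2ndfund} alive at each stage. Concretely, for the induction I would show $H^0(\cO_X(K_X+C_X - (n+1)Z)) = \overline{\m^{n+1}}\,K_A$ by feeding $L_1 = -Z$ and $L_2 = K_X+C_X-nZ$ into \lemref{l:2ndfund}(1), using that $h^1(\cO_X(K_X+C_X-nZ)) = p_g(A)$ (obtained from the stability above together with the base case $n=0$), to conclude surjectivity of the multiplication map, hence
\[
H^0(\cO_X(K_X+C_X-(n+1)Z)) = f_1\,H^0(\cO_X(K_X+C_X-nZ)) + f_2\,H^0(\cO_X(-(n+1)Z)),
\]
where $f_1 \in H^0(\cO_X(-Z)) = \m$ and $f_2 \in K_A$ are general. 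Combined with the inductive hypothesis $H^0(\cO_X(K_X+C_X-nZ)) = \overline{\m^n}K_A$, this identifies the right-hand side with $\m\cdot\overline{\m^n}K_A + \overline{\m^{n+1}}K_A = \overline{\m^{n+1}}K_A$, completing the step for the first assertion.

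For the second assertion, $\Image(\alpha_n) = \overline{\m^n}U$, I would read off the top horizontal row of $\cd(K_X+C_X,\omega,nZ)$ in Table \ref{tab:KD}: the map $\alpha_n\colon \cO_X(K_X+C_X-nZ) \to \cU(-nZ)$ is the cokernel of multiplication by $\omega$, so on global sections its image is $H^0(\cO_X(K_X+C_X-nZ))/\omega H^0(\cO_X(-nZ))$ modulo the kernel contributions. Using the first part, $H^0(\cO_X(K_X+C_X-nZ)) = \overline{\m^n}K_A$ maps onto $\overline{\m^n}K_A/\omega\overline{\m^n} = \overline{\m^n}(K_A/\omega A) = \overline{\m^n}U$ after identifying $U = K_A/\omega A$ with the relevant submodule of $H^0(\cU)$; here I would invoke the commutativity of the diagram and the injectivity statements from \lemref{l:inj} to justify that the image is exactly $\overline{\m^n}U$ rather than something larger.

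The main obstacle I anticipate is keeping the cohomological hypothesis of \lemref{l:2ndfund} propagating correctly through the induction: \lemref{l:2ndfund}(2) requires one of the two summands to realize $p_g(A)$, and I must verify that the stability $h^1(\cO_X(K_X+C_X-nZ)) = p_g(A)$ genuinely holds for \emph{all} $n$, which relies on chaining part (2) repeatedly and on the initial assumption being about either $K_X+C_X$ or $-Z$ alone. A subtle point is that the two hypotheses in the statement of \lemref{K-nZ} are offered as alternatives, so I would need to check that whichever one is assumed, the relevant summand in each application of \lemref{l:2ndfund} is the one carrying $h^1 = p_g(A)$; the case $h^1(\cO_X(-Z)) = p_g(A)$ in particular forces me to apply part (2) with $L_1 = -Z$ to first establish $h^1(\cO_X(K_X+C_X-Z)) = p_g(A)$ before the induction on $\overline{\m^n}$ can begin.
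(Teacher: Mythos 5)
Your overall strategy---reducing everything to \lemref{l:2ndfund}---is the same as the paper's (the paper treats \lemref{K-nZ} as a direct consequence of that lemma), but your execution of the case $h^1(\cO_X(K_X+C_X))=p_g(A)$ contains a genuine error in how the cohomological hypothesis is propagated. Part (2) of \lemref{l:2ndfund}, applied with $L_1=K_X+C_X$ and $L_2=-nZ$, gives the stability $h^1(\cO_X(K_X+C_X-nZ))=h^1(\cO_X(-nZ))$; combining this with the base case $h^1(\cO_X(K_X+C_X))=p_g(A)$ does \emph{not} yield $h^1(\cO_X(K_X+C_X-nZ))=p_g(A)$---that would require $h^1(\cO_X(-nZ))=p_g(A)$, which is precisely the $p_g$-ideal hypothesis of the \emph{other} case and can fail here. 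Concretely, for an elliptic singularity whose maximal ideal is not a $p_g$-ideal (e.g.\ \exref{ex:ellL=2} with $d\ge 2$), one has $h^1(\cO_X(K_X+C_X))=p_g(A)$ by \lemref{l:gen}, yet $h^1(\cO_X(K_X+C_X-nZ))=h^1(\cO_X(-nZ))\le h^1(\cO_X(-Z))<p_g(A)$ for $n\ge 1$. So in your inductive step \emph{neither} of the two divisors $-Z$ and $K_X+C_X-nZ$ carries $h^1=p_g(A)$, and \lemref{l:2ndfund}(1) cannot be invoked. This is not a removable technicality: this case is exactly the one needed for \thmref{th;Ugg} and \thmref{t:elliptic}.

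The repair is simpler than your induction. When $h^1(\cO_X(K_X+C_X))=p_g(A)$, apply \lemref{l:2ndfund}(1) once, for each $n$, with $L_1=K_X+C_X$ and $L_2=-nZ$: the required hypothesis is exactly the assumption on $L_1$, and $\cO_X(-nZ)$ is generated because $\m^n\cO_X=\cO_X(-nZ)$. Then, with $f_1\in K_A$ and $f_2\in\ol{\m^n}$ general,
\[
H^0(\cO_X(K_X+C_X-nZ))=f_1\ol{\m^n}+f_2K_A\subseteq \ol{\m^n}K_A\subseteq H^0(\cO_X(K_X+C_X-nZ)),
\]
which forces equality throughout; no induction and no propagation of $h^1$ are needed. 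When instead $h^1(\cO_X(-Z))=p_g(A)$, an induction of the kind you describe is legitimate, but the hypothesis of \lemref{l:2ndfund}(1) must be carried by $L_1=-Z$ itself, not by $K_X+C_X-nZ$; alternatively, first iterate part (2) with $L_1=-Z$, $L_2=-kZ$ to get $h^1(\cO_X(-nZ))=p_g(A)$ for all $n$, and then apply part (1) with $L_1=-nZ$, $L_2=K_X+C_X$. Also note that your displayed formula does not match your stated choice of $L_1,L_2$: it should read $f_1H^0(\cO_X(K_X+C_X-nZ))+f_2H^0(\cO_X(-Z))$ with $f_1\in\m$ and $f_2\in H^0(\cO_X(K_X+C_X-nZ))$. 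Your treatment of the second assertion $\Image(\al_n)=\ol{\m^n}U$, via commutativity of the diagram and injectivity of $H^0(\cU(-nZ))\to H^0(\cU)$, is fine once the first assertion is in place.
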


\begin{proof}[Proof of Theorem \ref{th;Ugg}]
In the following, we assume that $U = H^0(\cU)$, or, equivalently, 
 $H^0(\alpha_0) : H^0( \cO_X(K_X+C_X)) = K_A \to H^0(\cU)$
 is surjective. 
\par
 Note that, since $H^1(\cU)=0$, this condition 
 is   also equivalent to saying that 
  $H^1(\cO_X) \to H^1(\cO_X(K_X+C_X))$ is isomorphism. 
%{\color{red}
Hence we have $h^1(\cO_X(K_X+C_X))= p_g(A)$. 
%}
Then by Lemma \ref{l:2ndfund} we have $h^1(\cO_X(K_X+C_X-nZ))= h^1(\cO_X(-nZ))$
and since $H^1(\cU(-nZ))=0$, 
 $H^0(\alpha_n) : H^0(\cO_X(K_X+C_X-nZ)) \to H^0(\cU (-nZ))$ 
is surjective. 
%{\color{red}
Thus we get that the rows and columns of the following Table 3 are exact. 
%}
\begin{table}[htb]
\[
\begin{CD}
@. 0 @. 0 @. 0 @. \\
@. @VVV @VVV @VVV \\
0 @>>> H^0(\cO_X(-nZ)) @>{\times \omega}>> 
H^0(\cO_X(K_X+C_X-nZ))  @ >{a_n}>> H^0(\cU (-nZ))   @>>> 0 \\
@. @VVV @VVV @VVV \\
0 @>>> H^0(\cO_X)=A @>{\times \omega}>> H^0(\cO_X(K_X+C_X))=K_A @ >{a_0}>> H^0(\cU)=U @>>> 0  \\
@. @VVV @VVV @VVV \\
0 @>>> H^0(\cO_{nZ}) @>{\times \omega}>>H^0(\cO_{nZ}(K_X+C_X)) @ >>> H^0(\cU_{nZ}) @>>> 0 \\
@. @VVV @VVV @VVV \\
@. 0 @. 0 @. 0 @. \\
\end{CD}
\]
\smallskip

\caption{\label{tab:KZ} 
%{\color{red}
A diagram from %}
$\cd(K_X+C_X,\omega,nZ)$}
\end{table}
%{\color{red}
Also, by Lemma \ref{K-nZ} and since $U = K_A/ \omega A$, we have
 $H^0(\cU(-nZ)) = \ol{\m^n} U$ (cf. the diagram \eqref{eq:mU} below). 
Therefore, $U/ \overline{\m^n} U \cong H^0(\cU_{nZ})$. 
Hence we have 
\[
\ell_A ( U/ \overline{\m^n} U) = \ell_A (H^0(\cU_{nZ})) = n (K_X + C_X)\cdot Z
\]
by \lemref{l:cdU}.
Putting $n=1$, we have $\mu(U) = \ell_A( H^0(\cU_{Z}))$.
We also have 
\[
e_0(U) = \lim_{n\to \infty} \frac{\ell_A ( U/ \overline{\m^n} U)}{n} =(K_X + C_X)\cdot Z = \mu(U).
\qedhere
\] 
%}
%Thus we have shown that $A$ is almost Gorenstein. 
%}
%Hence $\ell_A( H^0(\cU_{Z})) =  \chi(\cO_Z(K_X+C_X)) - \chi(\cO_Z) = (K_X + C_X)\cdot Z$. 
%
%By the same reason,  since $H^0(\cU( -nZ)) = \overline{\m^n} U$,  we have 
%\[
%\ell_A ( U/ \overline{\m^n} U) = \ell_A (H^0_{nZ}(\cO_{nZ}(K_X+C_X)) = n (K_X + C_X)\cdot Z.
%\]
\end{proof} 

If $A$ is a rational singularity, then since 
%{\color{red} %$C_X=0$  and 
$H^1(\cO_X)=0$, 
%}
we have $U=H^0(\cU)$
and we have a proof of the following fact by geometric description.  

\begin{cor} 
If $A$ is a rational singularity, then $A$ is almost Gorenstein.
\end{cor}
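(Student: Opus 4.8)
The plan is to obtain this as an immediate application of \thmref{th;Ugg}. That theorem asserts that $A$ is almost Gorenstein as soon as $U = H^0(X,\cU)$, equivalently as soon as the map
\[
H^0(\alpha_0)\colon K_A = H^0(\cO_X(K_X+C_X)) \to H^0(\cU)
\]
is surjective. So the entire task reduces to verifying this single surjectivity under the rational hypothesis, and no new geometric input is needed.

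First I would take the long exact cohomology sequence associated to the fundamental exact sequence
\[
0 \to \cO_X \xrightarrow{\times \omega} \cO_X(K_X+C_X) \to \cU \to 0,
\]
which reads
\[
\cdots \to H^0(\cO_X(K_X+C_X)) \xrightarrow{H^0(\alpha_0)} H^0(\cU) \xrightarrow{\delta} H^1(\cO_X) \to \cdots.
\]
Exactness shows that $H^0(\alpha_0)$ is surjective precisely when the connecting homomorphism $\delta\colon H^0(\cU) \to H^1(\cO_X)$ is the zero map. Next I would invoke the definition of rationality: $A$ rational means $p_g(A) = h^1(\cO_X) = 0$, so $H^1(\cO_X) = 0$. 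Hence $\delta$ is trivially zero, $H^0(\alpha_0)$ is surjective, and since the kernel of $H^0(\alpha_0)$ is the image of $H^0(\cO_X) = A$ under $\times \omega$, namely $\omega A$, the induced map $U = K_A/\omega A \to H^0(\cU)$ is an isomorphism. Applying \thmref{th;Ugg} then yields that $A$ is almost Gorenstein.

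There is essentially no obstacle in this argument: all of the analytic content has already been packaged into \thmref{th;Ugg}, and the rational hypothesis trivializes the only cohomological condition one must check, since it forces $H^1(\cO_X)=0$ outright. The single point requiring (minor) care is the bookkeeping identifying $U = K_A/\omega A$ with $H^0(\cU)$ through the exact sequence, which becomes immediate once the kernel of $H^0(\alpha_0)$ is recognized as $\omega A$. One could equivalently phrase the verification, exactly as in the remark preceding this corollary, by noting that the surjectivity of $H^0(\alpha_0)$ is equivalent to the isomorphism $H^1(\cO_X) \to H^1(\cO_X(K_X+C_X))$, which holds vacuously when both sides vanish.
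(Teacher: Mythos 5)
Your proof is correct and is essentially the paper's own argument: the paper likewise notes that rationality gives $H^1(\cO_X)=0$, hence $U=H^0(\cU)$, and then invokes \thmref{th;Ugg}. You have merely spelled out the long exact sequence and the identification $U\cong H^0(\cU)$ that the paper leaves implicit.
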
 

\subsection{Elliptic Singularities}\label{ss:ell}
\phantom{AAAAAAA}\par 

The local ring $A$ is said to be {\em elliptic} if  $p_f(A)=1$.
Note that $A$ is an elliptic singularity if $p_g(A)=1$,  because $p_f(A)\le p_g(A)$ and $p_f(A)=0$ implies $p_g(A)=0$.
It is known that for any $n \in \Z_{\ge 1}$ there exists an elliptic singularity with $p_g=n$ (cf. \cite{yau.max}).

We will show that if $A$ is an elliptic singularity, then 
 $U=H^0(\cU)$ and hence 
$A$ is almost Gorenstein. 

For that purpose, we recall a result of K. Konno. 

Let $C_X = C_1 - C_2$, where cycles $C_1\ge 0$ and $C_2\ge 0$ have no common components.
%{\color{red}
Then we have 
%}
 $C_1=\coh$ (see \remref{r:CX}).
The following theorem plays a key role in this section.  %}

\begin{thm}[Konno {\cite[Theorem 4.2]{Konno-coh}}]
\label{t:Ko}
If $A$ is an elliptic singularity, then the
cohomological cycle $C_1$ is linearly equivalent to $-K_X$ on a neighborhood of $\supp (C_1)$.
\end{thm}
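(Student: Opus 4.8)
The plan is to reduce the statement to the triviality of the dualizing sheaf of the cohomological cycle. Write $C_1=\coh$ and set $\omega_{C_1}:=\cO_{C_1}(K_X+C_1)$, which is the dualizing sheaf of the Gorenstein curve $C_1$ by adjunction, since $C_1$ is an effective Cartier divisor on the smooth surface $X$. Saying that $C_1\sim -K_X$ on a neighborhood of $\supp(C_1)$ amounts to saying that $\cO_X(K_X+C_1)$ is trivial on such a neighborhood; restricting to $C_1$ this forces $\omega_{C_1}\cong\cO_{C_1}$. So I would first prove $\omega_{C_1}\cong\cO_{C_1}$ and then propagate the triviality to a neighborhood. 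Note that by Serre duality on $C_1$ we have $h^0(\omega_{C_1})=h^1(\cO_{C_1})=p_g(A)$ and $h^1(\omega_{C_1})=h^0(\cO_{C_1})$, which furnish the main cohomological input.

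The first key step is to show that $K_X+C_1$ is numerically trivial on $\supp(C_1)$, i.e. $(K_X+C_1)E_i=0$ for every $E_i\subset\supp(C_1)$. For one inequality I would exploit the minimality of the cohomological cycle: for $E_i\subset\supp(C_1)$ the cycle $C_1-E_i$ is still $\ge 0$ and strictly smaller than $\coh$, so it cannot lie in the set defining $\coh$, whence $h^1(\cO_{C_1-E_i})<p_g(A)=h^1(\cO_{C_1})$. Feeding this into the cohomology sequence of $0\to\cO_{E_i}(-(C_1-E_i))\to\cO_{C_1}\to\cO_{C_1-E_i}\to 0$ yields $h^1(\cO_{E_i}(-(C_1-E_i)))\ne 0$, and Serre duality on the smooth curve $E_i$, using $K_{E_i}=(K_X+E_i)|_{E_i}$, rewrites this as $h^0(\cO_{E_i}((K_X+C_1)|_{E_i}))\ne 0$; hence $(K_X+C_1)E_i\ge 0$. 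Summing over the components gives $(K_X+C_1)C_1=2p_a(C_1)-2\ge 0$, so $p_a(C_1)\ge 1$. Here ellipticity enters: since $A$ is elliptic, $p_a(D)\le p_f(A)=1$ for every cycle $D>0$ (the fundamental cycle maximizes the arithmetic genus; see \cite{wag.ell}), whence $p_a(C_1)=1$ and $(K_X+C_1)C_1=0$. As each summand $(K_X+C_1)E_i$ is $\ge 0$, they all vanish, which proves numerical triviality.

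The final, and hardest, step is to upgrade numerical triviality to the genuine isomorphism $\omega_{C_1}\cong\cO_{C_1}$ and then to a neighborhood. Numerical triviality makes $\omega_{C_1}$ a degree-zero line bundle on each component of $C_1$, and $h^0(\omega_{C_1})=p_g(A)>0$ supplies a nonzero global section $s$; the task is to show that $s$ may be chosen nowhere vanishing, which gives $\omega_{C_1}\cong\cO_{C_1}$. This is where the fine structure of elliptic singularities becomes indispensable: I expect to invoke Yau's elliptic sequence together with Laufer's analysis of (minimally) elliptic cycles \cite{yau.max, la.me} to see that $C_1$ is assembled from elliptic layers, each carrying a trivial dualizing sheaf, forcing $h^0(\cO_{C_1})=p_g(A)$ (equivalently $\chi(\cO_{C_1})=0$, consistent with $p_a(C_1)=1$) and the nonvanishing of $s$. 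I regard this gluing and nonvanishing argument as the principal obstacle, because for $p_g(A)\ge 2$ the curve $C_1$ is far from integral and a degree-zero line bundle with a section need not be trivial. To pass from $C_1$ to a neighborhood $V$ of $\supp(C_1)$, I would then use the formal-functions description $\pic(V)\cong\varprojlim_n\pic(nC_1)$ to propagate the triviality of $\cO_X(K_X+C_1)$ through the infinitesimal thickenings, again relying on the cohomological vanishing built into the elliptic structure.
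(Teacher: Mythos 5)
This theorem is not proved in the paper at all: it is quoted verbatim from Konno \cite[Theorem 4.2]{Konno-coh} and used as a black box in the proof of Lemma \ref{l:gen}. So your proposal has to be measured against the actual content of Konno's result, and by that measure it is incomplete. Your first step is correct and well executed: the minimality of $C_1=\coh$ gives $h^1(\cO_{C_1-E_i})<h^1(\cO_{C_1})=p_g(A)$ for each $E_i\subset\supp(C_1)$, the decomposition sequence $0\to\cO_{E_i}(-(C_1-E_i))\to\cO_{C_1}\to\cO_{C_1-E_i}\to 0$ then forces $h^1(\cO_{E_i}(-(C_1-E_i)))\ne 0$, and Serre duality plus adjunction on the smooth curve $E_i$ turns this into $(K_X+C_1)E_i\ge 0$ (this nefness statement is in fact true for any normal surface singularity). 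Combining with Wagreich's theorem that $p_f(A)=1$ implies $\chi(D)\ge 0$ for every cycle $D>0$, you correctly conclude $(K_X+C_1)E_i=0$ for all $E_i\subset\supp(C_1)$.

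The gap is everything after that, and it is not a technicality: upgrading numerical triviality to the isomorphism $\cO_{C_1}(K_X+C_1)\cong\cO_{C_1}$, and then to linear equivalence on a whole neighborhood of $\supp(C_1)$, \emph{is} Konno's theorem, and your proposal replaces it with ``I expect to invoke Yau's elliptic sequence together with Laufer's analysis.'' As you yourself note, $C_1$ is in general non-reduced and reducible when $p_g(A)\ge 2$, and on such a curve a line bundle of degree zero on every component admitting a nonzero section (which you get from $h^0(\omega_{C_1})=h^1(\cO_{C_1})=p_g(A)>0$) need not be trivial, since the section may vanish on nilpotent layers; controlling this is exactly where the elliptic-sequence structure must be used in a precise inductive way, not merely cited. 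The final passage to a neighborhood has the same problem: in $\pic(V)\cong\varprojlim_n\pic(nC_1)$ the lifting of a trivialization from $C_1$ to $nC_1$ is obstructed by groups of the form $H^1(\cO_{C_1}(-(n-1)C_1))$, which do not obviously vanish ($C_1$ need not be anti-nef on its support), and one still needs a formal-to-actual (or analytic tubular neighborhood) argument. So the proposal is a correct and clean reduction of the statement to its genuinely hard core, but that core --- the heart of Konno's proof --- is missing.
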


\begin{lem}\label{l:gen}
If $A$ is an elliptic singularity, then $h^1(\cO_X)=h^1(\cO_X(K_X+C_X))$.  
Hence $H^0(\alpha_0)$ is surjective and 
we have $U = H^0(\cU)$.  
\end{lem}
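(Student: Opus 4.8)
The plan is to reduce the claim $h^1(\cO_X)=h^1(\cO_X(K_X+C_X))$ to the geometry of the cohomological cycle $C_1$, exploiting Konno's theorem \thmref{t:Ko}. Writing $C_X=C_1-C_2$ with $C_1=\coh$ and $C_2$ effective without common components with $C_1$, I first record the equivalences already established in the proof of \thmref{th;Ugg}: the desired cohomological equality is equivalent to $H^0(\alpha_0)$ being surjective, which in turn is equivalent to $U=H^0(\cU)$. So it suffices to prove the single numerical equality $h^1(\cO_X(K_X+C_X))=h^1(\cO_X)=p_g(A)$.

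First I would apply \lemref{l:coh} with $L=K_X+C_X$ and $D=C_1=\coh$ (recall $\cO_X(K_X+C_X)$ has no fixed component by \remref{r:CX}(2)), which gives $H^1(\cO_X(K_X+C_X))\cong H^1(\cO_{C_1}(K_X+C_X))$. This localizes the whole computation to a neighborhood of $\supp(C_1)$, which is exactly the setting in which Konno's theorem applies. On such a neighborhood \thmref{t:Ko} gives $\cO_X(K_X)\cong\cO_X(-C_1)$, hence $\cO_X(K_X+C_X)\cong\cO_X(-C_1+C_1-C_2)=\cO_X(-C_2)$ restricted to $C_1$. Since $C_1$ and $C_2$ share no components, $C_2$ meets $C_1$ only in finitely many points, so $\cO_{C_1}(K_X+C_X)\cong\cO_{C_1}(-C_2)$ differs from $\cO_{C_1}$ only by a twist supported on a finite set, and I expect $h^1(\cO_{C_1}(-C_2))=h^1(\cO_{C_1})$. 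Combined with \lemref{l:coh} applied to the trivial line bundle (which yields $h^1(\cO_{C_1})=h^1(\cO_X)=p_g(A)$ since $C_1=\coh$), this gives the chain $h^1(\cO_X(K_X+C_X))=h^1(\cO_{C_1}(-C_2))=h^1(\cO_{C_1})=p_g(A)$, as required.

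The main obstacle is the step $h^1(\cO_{C_1}(-C_2))=h^1(\cO_{C_1})$. The inequality $h^1(\cO_{C_1}(-C_2))\le h^1(\cO_{C_1})$ would follow from a surjection $H^1(\cO_{C_1})\twoheadrightarrow H^1(\cO_{C_1}(-C_2))$ coming from the quotient $\cO_{C_1}\to\cO_{C_1}(-C_2)$ having zero-dimensional support cokernel, while the reverse inequality needs the fact (from $C_1=\coh$) that $p_g(A)=h^1(\cO_{C_1})$ is already maximal and cannot be exceeded by $h^1(\cO_X(K_X+C_X))$, which is bounded above by $p_g(A)$ via \lemref{l:inj}(3) since $K_X+C_X$ may be compared against $\cO_X$. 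The cleanest route is probably to combine the general bound $h^1(\cO_X(K_X+C_X))\le h^1(\cO_X)=p_g(A)$ (from \lemref{l:inj}(3)) with a lower bound of $p_g(A)$ obtained from the Konno isomorphism on the neighborhood of $C_1$; since $\cO_{C_1}(K_X+C_X)$ agrees with $\cO_{C_1}$ up to a finite twist, its $h^1$ is at least $h^1(\cO_{C_1})=p_g(A)$. The two inequalities force equality, and then surjectivity of $H^0(\alpha_0)$ and $U=H^0(\cU)$ follow formally from the diagram in Table~\ref{tab:KD} exactly as in the proof of \thmref{th;Ugg}.

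I would therefore present the argument in two clean inequalities rather than attempting to identify $H^1(\cO_{C_1}(-C_2))$ with $H^1(\cO_{C_1})$ directly: the upper bound $h^1(\cO_X(K_X+C_X))\le p_g(A)$ is immediate from \lemref{l:inj}(3), and the lower bound $h^1(\cO_X(K_X+C_X))\ge p_g(A)$ comes from \lemref{l:coh} together with \thmref{t:Ko}, after checking that twisting by $-C_2$ on the finite overlap with $C_1$ does not decrease $h^1$ below $p_g(A)=h^1(\cO_{C_1})$. The elliptic hypothesis enters solely through Konno's theorem, which supplies the linear equivalence $C_1\sim -K_X$ near $\supp(C_1)$ that makes the twist by $K_X+C_X$ trivial up to the finite correction.
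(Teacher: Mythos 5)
Your proof is correct, and its endgame genuinely differs from the paper's. Both arguments share the same skeleton: reduce to the cohomological cycle $C_1=\coh$ via \lemref{l:coh}, then use Konno's theorem to identify $\cO_{C_1}(K_X+C_X)$ with $\cO_{C_1}(-C_2)$. At that point the paper does not sandwich dimensions: it invokes the nefness of $K_X+C_X$ (\remref{r:CX}) together with $(K_X+C_1)C_1=0$ (the numerical consequence of Konno's isomorphism) to get $0\le (K_X+C_X)C_1=-C_2C_1$, while $C_1C_2\ge 0$ because $C_1,C_2$ are effective with no common component; hence $C_1C_2=0$, the supports are disjoint, $\cO_{C_1}(K_X+C_X)\cong\cO_{C_1}$ exactly, and the equality $h^1(\cO_X(K_X+C_X))=h^1(\cO_{C_1})=p_g(A)$ drops out with no inequalities. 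Your route avoids the disjointness argument (and nefness) entirely, replacing it by the upper bound from \lemref{l:inj}(3) and the lower bound $h^1(\cO_{C_1}(-C_2))\ge h^1(\cO_{C_1})=p_g(A)$, which holds because the inclusion $\cO_{C_1}(-C_2)\hookrightarrow\cO_{C_1}$ has punctual cokernel, so $H^1(\cO_{C_1}(-C_2))\to H^1(\cO_{C_1})$ is surjective. Two caveats on your write-up: this monotonicity is valid only because the twist is by $-C_2$ with $C_2$ effective (a twist by $+C_2$ would move $h^1$ the other way), so the phrase ``differs only by a finite twist'' should be sharpened to record the direction; and your intermediate sentence positing a ``quotient $\cO_{C_1}\to\cO_{C_1}(-C_2)$'' is backwards, since $\cO_{C_1}(-C_2)$ is a subsheaf, not a quotient, of $\cO_{C_1}$ --- but your final two-inequality argument never uses that sentence, so the proof stands. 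What the paper's extra intersection computation buys is the stronger sheaf-level statement $\cO_{C_1}(K_X+C_X)\cong\cO_{C_1}$ and the geometric fact $\supp(C_1)\cap\supp(C_2)=\emptyset$; what your version buys is economy of hypotheses at the final step.
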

\begin{proof}
We have  $\cO_{C_1}(K_X+C_1)\cong \cO_{C_1}$ by \thmref{t:Ko}.
Since $K_X+C_X$ is nef (\remref{r:CX}), 
we have $0 \le (K_X+C_1-C_2)C_1 = -C_2C_1$, and thus $\supp(C_1) \cap \supp(C_2) = \emptyset$.
Therefore, $\cO_{C_1}(K_X+C_X)\cong \cO_{C_1}$. 
By 
\lemref{l:coh}, $H^1(\cO_X(K_X+C_X)) \cong H^1(\cO_{C_1}(K_X+C_X))$.
Hence $h^1(\cO_X(K_X+C_X))= h^1(\cO_{C_1})=p_g(A) = h^1(\cO_X)$.
\end{proof}

\begin{cor}
If $A$ is an elliptic singularity, then 
$\cO_X(K_X+C_X)$ is generated for any resolution $X$  $($cf. \proref{p:KAfree}$)$.
\end{cor}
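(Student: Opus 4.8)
The plan is to promote the existence statement of \proref{p:KAfree} to one valid on \emph{every} resolution. The mechanism is that the hypothesis $U=H^0(\cU)$ of \thmref{th;Ugg} not only yields the almost Gorenstein property but in fact forces $\cO_X(K_X+C_X)$ to be globally generated, and that for an elliptic singularity this hypothesis is available on an arbitrary resolution.

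First I would check that the conclusion $h^1(\cO_X)=h^1(\cO_X(K_X+C_X))$ of \lemref{l:gen} holds on any resolution $X$, not only on those meeting the standing assumptions of \sref{s:Ugg}. This is the point where one must guard against circularity: the proof must not secretly use global generation of $\cO_X(K_X+C_X)$. Inspecting the argument of \lemref{l:gen}, it uses only Konno's \thmref{t:Ko} (valid since $A$ is elliptic), the nefness of $K_X+C_X$ from \remref{r:CX}(2) (a consequence of the minimality of $C_X$, available on every resolution), and \lemref{l:coh} (whose hypotheses—no fixed component and $C_1\ge\coh$—again hold on every resolution). So $h^1(\cO_X(K_X+C_X))=p_g(A)$ with no extra assumption on $X$.

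Given this, \lemref{l:inj}(3) with $L'=0$ supplies a surjection $H^1(\cO_X)\twoheadrightarrow H^1(\cO_X(K_X+C_X))$ of spaces of equal dimension $p_g(A)$, hence an isomorphism; the cohomology sequence of $0\to\cO_X\xrightarrow{\times\omega}\cO_X(K_X+C_X)\to\cU\to 0$ then shows $H^0(\cO_X(K_X+C_X))\to H^0(\cU)$ is onto, i.e.\ $U=H^0(\cU)$. It remains to prove the local implication that $\cL:=\cO_X(K_X+C_X)$ is generated once it sits in $0\to\cO_X\xrightarrow{\times\omega}\cL\to\cU\to0$ with $\cU$ globally generated (which holds because $\supp\cU$ is an affine curve by \lemref{l:inj}(1), so $\cU$ is coherent on an affine scheme) and $H^0(\cL)\to H^0(\cU)$ surjective. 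I would argue fibrewise at a point $x\in X$: if $x\notin\supp\cU$ then $\times\omega$ is an isomorphism near $x$ and $\omega$ already generates $\cL_x$; if $x\in\supp\cU$ then $\omega$ vanishes at $x$, so $\omega\otimes1=0$ in $\cL\otimes k(x)$ and right-exactness gives an injection $\cL\otimes k(x)\hookrightarrow\cU\otimes k(x)$. Composing the surjections $H^0(\cL)\twoheadrightarrow H^0(\cU)\twoheadrightarrow\cU\otimes k(x)$, whose composite factors through $\cL\otimes k(x)$, forces $H^0(\cL)\to\cL\otimes k(x)$ to be surjective, so $\cL$ is generated at $x$.

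The step I expect to be the main obstacle is the first one: certifying that \lemref{l:gen} really does transfer to an arbitrary resolution, independently of the generation hypothesis we are trying to establish. Once $U=H^0(\cU)$ is secured the rest is formal, and the final local implication is best isolated as a general lemma complementing \thmref{th;Ugg}; I would also cross-check consistency with \proref{p:KAfree} by noting that on the special resolutions produced there the two routes to generation agree.
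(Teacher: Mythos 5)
Your proof is correct and follows essentially the same route as the paper's: \lemref{l:gen} (via Konno's \thmref{t:Ko}, nefness from \remref{r:CX}, and \lemref{l:coh}) gives $h^1(\cO_X)=h^1(\cO_X(K_X+C_X))$ on an arbitrary resolution, hence $H^0(\alpha_0)$ is surjective, and global generation of $\cU$ (a coherent sheaf on an affine curve) then forces $\cO_X(K_X+C_X)$ to be generated. The only difference is that you make explicit two points the paper leaves implicit: the check that \lemref{l:gen} does not depend on the standing generation hypothesis of \sref{s:Ugg}, and the fibrewise Nakayama argument at points of $\supp\cU$.
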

\begin{proof}
The base points of $\cO_X(K_X+C_X)$ are contained in the support of $\cU$,
and $\cU$ is generated by global sections because it is a coherent sheaf on an affine curve. 
Therefore, $\cO_X(K_X+C_X)$ is also generated since $H^0(\alpha_0) \: H^0(\cO_X(K_X+C_X)) \to H^0(\cU)$ (see Table \ref{tab:KD})
is surjective by \lemref{l:gen}.
\end{proof}

\begin{thm}\label{t:elliptic}
Assume that $A$ is an elliptic singularity. Then 
$A$ is almost Gorenstein.
In fact, if $\m$ is represented by a cycle $Z$ on $X$, 
then we have, for $n\in \Z_{\ge 0}$, 
\[
\ell_A(U/\ol{\m^n} U)=n(K_X+C_X)Z\quad {\text and} \quad \mu(U) = e_0(U) = (K_X+C_X)Z.
\]
%}
\end{thm}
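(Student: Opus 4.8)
The plan is to reduce \thmref{t:elliptic} to \thmref{th;Ugg} via the key input supplied by \lemref{l:gen}. Since $A$ is an elliptic singularity, \lemref{l:gen} gives $h^1(\cO_X)=h^1(\cO_X(K_X+C_X))$, and therefore $H^0(\alpha_0)$ is surjective, i.e. $U=H^0(\cU)$. This is precisely the hypothesis of \thmref{th;Ugg}, so the almost Gorenstein conclusion follows immediately. The remaining task is to establish the explicit length and multiplicity formulas, and I would do this by running the same argument used inside the proof of \thmref{th;Ugg} rather than just invoking its statement.

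First I would fix a resolution $X$ on which $\cO_X(K_X+C_X)$ is generated (which exists for elliptic singularities by the corollary following \lemref{l:gen}) and on which $\m$ is represented by a cycle $Z$, so that $\m\cO_X=\cO_X(-Z)$. Since $h^1(\cO_X(K_X+C_X))=p_g(A)$ by \lemref{l:gen}, \lemref{K-nZ} applies and yields $H^0(\cO_X(K_X+C_X-nZ))=\ol{\m^n}K_A$ together with $\Image(\al_n)=\ol{\m^n}U$ for every $n\ge 1$. Combined with the surjectivity of $H^0(\alpha_n)$ (obtained from \lemref{l:2ndfund}(1), whose hypothesis $h^1(\cO_X(K_X+C_X))=p_g(A)$ is now in force), the diagram in Table~\ref{tab:KZ} has exact rows and columns, giving an isomorphism $U/\ol{\m^n}U\cong H^0(\cU_{nZ})$.

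Next I would compute the length. By \lemref{l:cdU}(2) applied with $L=K_X+C_X$ and $D=nZ$, we have $h^0(\cU_{nZ})=(K_X+C_X)\cdot(nZ)=n(K_X+C_X)Z$, so $\ell_A(U/\ol{\m^n}U)=n(K_X+C_X)Z$, which is the first asserted formula and is visibly linear in $n$. Setting $n=1$ gives $\mu(U)=\ell_A(U/\ol{\m}U)=(K_X+C_X)Z$ (using $\ol{\m}=\m$ for the integrally closed maximal ideal, so that $U/\m U=U/\ol{\m}U$). For the multiplicity I would use that $\{\ol{\m^n}\}$ is cofinal with $\{\m^n\}$, so the multiplicity of $U$ can be read off from the leading coefficient of $\ell_A(U/\ol{\m^n}U)$; since this length is exactly $n(K_X+C_X)Z$, one gets $e_0(U)=(K_X+C_X)Z=\mu(U)$, confirming that $U$ is Ulrich and recovering the almost Gorenstein property.

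I do not expect a serious obstacle here, since the heavy lifting is already done in \lemref{l:gen} and \thmref{th;Ugg}; the statement is essentially a packaging of those results with the explicit intersection-number formula from \lemref{l:cdU}. The one point requiring a little care is the passage from the length formula to the multiplicity: one must justify that the integral-closure filtration $\{\ol{\m^n}U\}$ computes the same multiplicity $e_0(U)$ as the $\m$-adic filtration, which holds because the integral closures $\ol{\m^n}$ are cofinal with the powers $\m^n$ and $U$ is Cohen-Macaulay of dimension one. Once that is noted, the equality $\mu(U)=e_0(U)=(K_X+C_X)Z$ is immediate from the linear length formula.
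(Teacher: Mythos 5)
Your proposal is correct and follows essentially the same route as the paper: the paper's proof of \thmref{t:elliptic} is literally ``\lemref{l:gen} plus \thmref{th;Ugg} (and its proof),'' and your re-running of that proof---surjectivity of $H^0(\alpha_n)$ via \lemref{l:2ndfund}, $\Image(\al_n)=\ol{\m^n}U$ via \lemref{K-nZ}, and the length count $h^0(\cU_{nZ})=n(K_X+C_X)Z$ via \lemref{l:cdU}---is exactly the argument given there. Your explicit justification that the integral-closure filtration $\{\ol{\m^n}U\}$ computes $e_0(U)$ (cofinality with the $\m$-adic filtration) is a point the paper leaves implicit in writing $e_0(U)=\lim_{n\to\infty}\ell_A(U/\ol{\m^n}U)/n$, but it is a minor elaboration rather than a different approach.
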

\begin{proof} This follows from Lemma \ref{l:gen} and Theorem \ref{th;Ugg}
%{\color{red}
(and its proof).
%}
\end{proof}

\section{The multiplicity of the module $U$}
\label{s:mult}

%{\color{red}
We use the notation of \S \ref{ss:exactseq} with  $L=K_X+C_X$ and $D=nZ$ ($n\in \Z_{\ge 1}$) and the preceding section, and also use Table \ref{tab:KD}.
We will discuss the case when $U = H^0(\cU)$ does not hold. 
%}

%We preserve the notation of the preceding section and the Table \ref{tab:KD} and 
%discuss the case when $U = H^0(\cU)$ does not hold.  

We fix a general element 
%{\color{red}
$\omega \in K_A$ (cf. \remref{w:choice}), 
%}
and let $U=K_A/ \omega A$.
Assume that $\cO_X(K_X+C_X)$ is generated and that $\m$ is represented by a cycle $Z>0$ on $X$, i.e., $\cO_X(-Z)$ is generated, $\m\cO_X=\cO_X(-Z)$, and $\m=H^0(\cO_X(-Z))$.
Note that we can take  such resolution by \proref{p:KAfree}.
%{\color{red} 
Recall that  $H^0(\cO_X(K_X+C_X)) = K_A$ and $H^0(\cO_X(-nZ))=\ol{\m^n}$.
%}
For $n \in \Z_{\ge 0}$, let 
\[
\eta_n=\ell_A(H^0(\cU(-nZ)) / \ol{\m^n}U).
\]  %}

 In particular, $\eta_0 =0$ if and only if $U = H^0(\cU)$. 

\bigskip 

The following lemma is the most fundamental in our theory.

\begin{lem}\label{l:codim}
%{\color{red} 
 $\eta_n\le \eta_0= p_g(A) - h^1(\cO_X(K_X+C_X))$.  %}
\end{lem}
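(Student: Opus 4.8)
The plan is to establish the two assertions of Lemma~\ref{l:codim} separately: first the equality $\eta_0 = p_g(A) - h^1(\cO_X(K_X+C_X))$, and then the inequality $\eta_n \le \eta_0$. Both should follow by chasing cohomology through the diagram $\cd(K_X+C_X,\omega,nZ)$ in Table~\ref{tab:KD}, together with the vanishing $H^1(\cU(-nZ))=0$ from \lemref{l:cdU}.

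For the equality, I would take the top horizontal short exact sequence of Table~\ref{tab:KD} with $n=0$, namely $0 \to \cO_X \to \cO_X(K_X+C_X) \to \cU \to 0$, and pass to the long exact cohomology sequence. Since $H^0(\alpha_0)\colon K_A \to H^0(\cU)$ has cokernel measuring exactly how far $U$ sits inside $H^0(\cU)$, and since $U = K_A/\omega A$ is by construction the image of $H^0(\alpha_0)$, the cokernel of $H^0(\alpha_0)$ is $H^0(\cU)/U$, whose length is $\eta_0$. Connecting this to $H^1$, the exact sequence gives $\eta_0 = \dim_k \Coker H^0(\alpha_0) = \dim_k \Ker\!\bigl(H^1(\cO_X) \to H^1(\cO_X(K_X+C_X))\bigr)$. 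Because the map $H^1(\cO_X)\to H^1(\cO_X(K_X+C_X))$ is surjective by \lemref{l:inj}(3) (applied with $L'=0$), this kernel has length $h^1(\cO_X) - h^1(\cO_X(K_X+C_X)) = p_g(A) - h^1(\cO_X(K_X+C_X))$, which is the claimed value of $\eta_0$.

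For the inequality $\eta_n \le \eta_0$, the idea is to compare the defining lengths for $n$ and for $0$ using the leftmost vertical sequence of Table~\ref{tab:KD}, which relates $\cO_X(-nZ)$, $\cO_X$, and $\cO_{nZ}$, and to run the same $\Coker H^0$ computation one level down. Concretely, $\eta_n$ is the length of $H^0(\cU(-nZ))/\ol{\m^n}U$, and by the top horizontal sequence (with the shift by $-nZ$) this cokernel is identified with $\Ker\!\bigl(H^1(\cO_X(-nZ)) \to H^1(\cO_X(K_X+C_X-nZ))\bigr)$ — here I use $H^0(\cO_X(-nZ))=\ol{\m^n}$ and that $\ol{\m^n}U$ is the image of the induced map, exactly as in the $n=0$ case. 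The plan is then to show this kernel injects into (or has length bounded by) the corresponding kernel for $n=0$, using the vertical maps of Table~\ref{tab:KD} together with \lemref{l:inj}(3): tensoring the defining sequence $0\to\cO_X\to\cO_X(K_X+C_X)\to\cU\to 0$ by $\cO_X(-nZ)$ and by $\cO_X$ and comparing via the multiplication-by-$Z$ maps gives a commutative ladder of $H^1$'s whose kernels are the two quantities $\eta_n$ and $\eta_0$.

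The step I expect to be the main obstacle is verifying that $\ol{\m^n}U$ really is the image of the natural map $H^0(\cO_X(K_X+C_X-nZ))\to H^0(\cU(-nZ))$, so that $\eta_n$ is genuinely a cokernel length and the cohomological identification above is legitimate. This is the content I would need to extract carefully: without the hypothesis of \lemref{K-nZ} one does not automatically know $\Image(\alpha_n)=\ol{\m^n}U$, so I would instead argue directly that $\ol{\m^n}U = \ol{\m^n}\cdot\Image H^0(\alpha_0)$ maps into $H^0(\cU(-nZ))$ and that the relevant comparison of kernels respects this submodule. Once the two kernels are set up as subspaces fitting into a commutative diagram, the inequality $\eta_n\le\eta_0$ should drop out from a diagram chase (the snake lemma or a direct injectivity check on the connecting maps), so the bookkeeping of which cohomology groups are being compared, rather than any deep geometric input, is where the care is needed.
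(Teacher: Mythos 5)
Your treatment of the equality is correct and agrees with the paper: since $U=\Img H^0(\alpha_0)$, one has $\eta_0=\ell_A(\Coker H^0(\alpha_0))$, and because $H^1(\cU)=0$ forces $H^1(\cO_X)\to H^1(\cO_X(K_X+C_X))$ to be surjective, this equals $p_g(A)-h^1(\cO_X(K_X+C_X))$. The gap is in the inequality $\eta_n\le\eta_0$. Your reduction of $\eta_n$ to the kernel length $\ell_A\Ker\bigl(H^1(\cO_X(-nZ))\to H^1(\cO_X(K_X+C_X-nZ))\bigr)$ presupposes $\Img H^0(\alpha_n)=\ol{\m^n}U$, which is precisely what is unknown in general: without the hypothesis of \lemref{K-nZ} one only has $\ol{\m^n}U\subseteq\Img H^0(\alpha_n)$, so that
\[
\eta_n=\ell_A\bigl(\Img H^0(\alpha_n)/\ol{\m^n}U\bigr)+\ell_A\bigl(\Coker H^0(\alpha_n)\bigr),
\]
and the first summand needs its own bound. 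You flag this as the main obstacle, but the fix you propose --- a commutative ladder of $H^1$'s and a diagram chase --- cannot close it, and it cannot even handle the second summand: the vertical maps such as $H^1(\cO_X(-nZ))\to H^1(\cO_X)$ in Table \ref{tab:KD} are in general neither injective nor surjective (their failure is controlled by $H^0(\cO_{nZ})$ and $H^1(\cO_{nZ})$), so no formal chase compares the two kernels. The needed inequality $\ell_A(\Coker H^0(\alpha_n))\le\eta_0$, i.e.\ $h^1(\cO_X(-nZ))-h^1(\cO_X(K_X+C_X-nZ))\le p_g(A)-h^1(\cO_X(K_X+C_X))$, is not a consequence of the ladder of long exact sequences.

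The missing idea is the paper's use of \lemref{l:2ndfund} with \emph{two general global sections}. Take general $f_1\in K_A=H^0(\cO_X(K_X+C_X))$ and $f_2\in\ol{\m^n}=H^0(\cO_X(-nZ))$, and set $V_n=f_2K_A+f_1\ol{\m^n}$. The Koszul-type exact sequence of \lemref{l:2ndfund} yields the exact sequence \eqref{eq:V_n}, which computes
\[
\Delta_n:=\ell_A\bigl(H^0(\cO_X(K_X+C_X-nZ))/V_n\bigr)
=p_g(A)-h^1(\cO_X(K_X+C_X))-h^1(\cO_X(-nZ))+h^1(\cO_X(K_X+C_X-nZ)).
\]
Since $V_n\subseteq\Img\gamma$, where $\gamma\:K_A\otimes\ol{\m^n}\to H^0(\cO_X(K_X+C_X-nZ))$ is the multiplication map, and since the commutative square \eqref{eq:mU} gives $H^0(\alpha_n)(\Img\gamma)=\ol{\m^n}U$, one obtains $\ell_A(\Img H^0(\alpha_n)/\ol{\m^n}U)\le\Delta_n$; adding $\ell_A(\Coker H^0(\alpha_n))=h^1(\cO_X(-nZ))-h^1(\cO_X(K_X+C_X-nZ))$ makes the $h^1$ terms telescope to $p_g(A)-h^1(\cO_X(K_X+C_X))=\eta_0$. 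Note that $\Delta_n\ge 0$ (it is a length) is exactly the inequality your chase would have had to produce, which shows that the two-section Koszul sequence is genuinely new input, not bookkeeping; without invoking it, your argument does not go through.
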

\begin{proof} From the diagram in Table \ref{tab:KD}, we have the following commutative diagram: 
\begin{equation}\label{eq:mU}
\begin{CD} 
H^0(\cO_{X}(K_X+C_X)) \otimes H^0(\cO_X(-nZ)) @>{\ H^0(\alpha_0) \otimes 1 \ }>>
H^0(\cU) \otimes H^0(\cO_X(-nZ)) \\ 
 @VV{\gamma}V @VV{\delta}V \\
H^0(\cO_X(K_X+C_X-nZ)) @>{\ H^0(\alpha_n) \ }>>  H^0(\cU(-nZ))
\end{CD}
\end{equation}
%{\color{red}
Here, $\gamma$ and $\delta$ are the multiplication maps.  %}
Since the image of $\delta\circ  (H^0(\alpha_0) \otimes 1)$ 
%{\color{red}
coincides with %}
 $\ol{\m^n} U$, 
we have
\[
\ol{\m^n} U \subset \Img H^0(\alpha_n)  \subset H^0(\cU(-nZ)).
\]
On the other hand, taking general elements 
%{\color{red}
$f_1 \in H^0(\cO_X(K_X+C_X))=K_A$ and 
$f_2\in H^0(\cO_X(-nZ)) = \ol{\m^n}$, %}
using \lemref{l:2ndfund}, we have the following exact sequence$:$
\begin{gather}\label{eq:V_n}
\begin{split}
0 \to  V_n:=f_2 K_A + f_1 \ol{\m^n} \to H^0(\cO_X(K_X+C_X-nZ)) \to H^1(\cO_X) 
\\
\to H^1(\cO_X(K_X+C_X)) \oplus H^1(\cO_X(-nZ)) \to 
H^1(\cO_X(K_X+C_X-nZ) ) \to 0.
\end{split}
\end{gather}
Let $\Delta_n:=\ell_A(H^0(\cO_X(K_X+C_X-nZ)) / V_n )$.
Since $V_n \subset \Img \gamma$, 
we obtain that
%{\color{red}
\begin{align*}
\eta_n
&
= \ell_A(\Img H^0(\alpha_n) / \ol{\m^n}U ) + \ell_A(\Coker H^0(\alpha_n) ) 
\\
&
\le \ell_A(H^0(\cO_X(K_X+C_X-nZ)) / \Img \gamma ) + \ell_A(\Coker H^0(\alpha_n) ) 
\\  
& \le \Delta_n + \ell_A(\Coker H^0(\alpha_n)) 
\\
&= [p_g(A) - \left(h^1(\cO_X(K_X+C_X)) + h^1(\cO_X(-nZ)) \right) + h^1(\cO_X(K_X+C_X-nZ) ) ]
\\
& \quad 
+ [h^1(\cO_X(-nZ)) ) - h^1(\cO_X(K_X+C_X-nZ)) ]
\\
&= p_g(A) - h^1(\cO_X(K_X+C_X)) = \ell_A(\Coker H^0(\alpha_0) ) = \eta _0.
\qedhere
\end{align*}
%}
\end{proof}

\begin{rem}\label{r:Delta}
In the situation of the proof of \lemref{l:codim}, 
 if $\Delta_n=0$, then the 
vertical map $\gamma$ in %{\color{red} 
the diagram \eqref{eq:mU} %}
is surjective,  and hence
we have 
$\eta_n =\ell_A(\Coker H^0(\alpha_n))$
%{\color{red}
and $\eta_n=\eta_0$. %}
\end{rem}

\begin{thm}\label{t:HP}
We have $e_0(U)=(K_X+C_X)Z$ and 
%{\color{red}
$\mu(U)=e_0(U)+\eta_1-\eta_0$. %}
In fact, the following formula holds:
\[
\ell_A(U/\ol{\m^n} U)=n(K_X+C_X)Z +\eta_n- \eta_0.
\]
Therefore, $A$ is almost Gorenstein if and only if $\eta_0  = \eta_1$.
\end{thm}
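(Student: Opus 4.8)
The plan is to reduce the whole statement to a single length count carried out inside $H^0(\cU)$, in which the three terms $n(K_X+C_X)Z$, $\eta_n$ and $\eta_0$ appear as lengths of consecutive quotients. First I would apply $H^0$ to the right-hand vertical sequence of $\cd(K_X+C_X,\omega,nZ)$ in Table \ref{tab:KD}; since $H^1(\cU(-nZ))=0$ by \lemref{l:cdU}, this gives the short exact sequence
\[
0 \to H^0(\cU(-nZ)) \to H^0(\cU) \to H^0(\cU_{nZ}) \to 0,
\]
whence $\ell_A\bigl(H^0(\cU)/H^0(\cU(-nZ))\bigr)=h^0(\cU_{nZ})=(K_X+C_X)(nZ)=n(K_X+C_X)Z$, the last equalities again by \lemref{l:cdU}.

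Next I would observe that $\ol{\m^n}U$ lies below both $U$ and $H^0(\cU(-nZ))$ inside $H^0(\cU)$: since $\ol{\m^n}=H^0(\cO_X(-nZ))$, multiplication carries $U\subset H^0(\cU)$ into $H^0(\cU(-nZ))$, producing the two chains $\ol{\m^n}U\subset U\subset H^0(\cU)$ and $\ol{\m^n}U\subset H^0(\cU(-nZ))\subset H^0(\cU)$. Computing $\ell_A\bigl(H^0(\cU)/\ol{\m^n}U\bigr)$ along each chain and using additivity of length, the first chain contributes $\eta_0+\ell_A(U/\ol{\m^n}U)$ (recall $\eta_0=\ell_A(H^0(\cU)/U)$), while the second contributes $n(K_X+C_X)Z+\eta_n$ (recall $\eta_n=\ell_A(H^0(\cU(-nZ))/\ol{\m^n}U)$). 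Equating the two yields the asserted formula
\[
\ell_A(U/\ol{\m^n} U)=n(K_X+C_X)Z +\eta_n- \eta_0.
\]

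Finally I would read off the three consequences. Dividing by $n$ and letting $n\to\infty$, the bound $0\le\eta_n\le\eta_0$ from \lemref{l:codim} makes $\eta_n-\eta_0$ bounded, so $e_0(U)=\lim_n \ell_A(U/\ol{\m^n}U)/n=(K_X+C_X)Z$, exactly as in the proof of \thmref{th;Ugg}. Setting $n=1$, and using $\ol{\m}=\m$ because $\m$ is represented by $Z$ and hence integrally closed, gives $\mu(U)=\ell_A(U/\m U)=(K_X+C_X)Z+\eta_1-\eta_0=e_0(U)+\eta_1-\eta_0$. As $U$ is either zero or Cohen-Macaulay of dimension one, it is Ulrich exactly when $\mu(U)=e_0(U)$, i.e. when $\eta_1=\eta_0$, and $A$ is almost Gorenstein precisely when $U$ is Ulrich; this gives the criterion. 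The genuinely nontrivial inputs are the two cited lemmas, so I expect the main obstacle to be organizational rather than conceptual: the delicate point is justifying that the Hilbert--Samuel multiplicity $e_0(U)$, defined via $\{\m^nU\}$, may be computed along the integral-closure filtration $\{\ol{\m^n}U\}$, and then confirming that $\eta_n$ is sandwiched tightly enough that the $n\to\infty$ limit isolates the intersection number $(K_X+C_X)Z$.
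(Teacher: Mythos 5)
Your proposal is correct and takes essentially the same route as the paper's own proof: the paper's argument is built on exactly your two chains $\ol{\m^n}U\subset U\subset H^0(\cU)$ and $\ol{\m^n}U\subset H^0(\cU(-nZ))\subset H^0(\cU)$ (phrased there as a commutative square of injections), with \lemref{l:cdU} supplying $\ell_A\bigl(H^0(\cU)/H^0(\cU(-nZ))\bigr)=n(K_X+C_X)Z$, \lemref{l:codim} supplying the boundedness of $\eta_n-\eta_0$ for the multiplicity, and the case $n=1$ giving $\mu(U)$. The one point you flag as delicate --- computing $e_0(U)$ along the integral-closure filtration $\{\ol{\m^n}U\}$ rather than $\{\m^nU\}$ --- is passed over silently in the paper as well, so your treatment matches it.
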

\begin{proof}
We have the following commutative diagram of injections:
\[
\begin{CD}
\ol{\m^n}U @>>> U \\
@VVV @VVV \\
H^0(\cU(-nZ)) @>>> H^0(\cU).
\end{CD}
\]
Thus it follows from \lemref{l:cdU}  that
\begin{align*}
\ell_A(U/\ol{\m^n} U)
&=\eta_n  + h^0(\cU_{nZ})  - \ell_A(H^0(\cU)/U)
\\
&=\eta_n  + n(K_X+C_X)Z - \eta_0.
\end{align*}
Since $\eta_n- \eta_0$ is bounded by \lemref{l:codim}, we obtain $e_0(U)=(K_X+C_X)Z$.
%{\color{red}
Letting $n=1$, we have %}
 $\mu(U)=\ell_A(U/\m U)=(K_X+C_X)Z +\eta_1- \eta_0$.
\end{proof}

%\begin{cor}
%If $U = H^0( \cU )$ holds, then $A$ is almost Gorenstein.
%\end{cor}
%\begin{proof}
%The assumption implies $\eta_0=0$. Hence $\eta_1 = \eta_0$
%{\color{red} 
%by \lemref{l:codim}.  %}
%\end{proof}

The $p_g$-ideals are introduced in \cite{OWYgood}, 
which  have nice properties of integrally closed ideals in rational singularities, and several characterizations 
%{\color{red} 
of $p_g$-ideals are obtained 
in \cite{OWYrees}. %}
For example, we have the following.

%\begin{prop}[cf. {\cite[4.1]{OWYrees}}]
%\label{p:Rees}
%An $\m$-primary integrally closed ideal $I$ of $A$ is a $p_g$-ideal if and only if $I^2=QI$ for a minimal reduction $Q$ of $I$ and %{\color{red} 
%$I$ is normal, namely, %}
%$I^n$ is integrally closed for $n\in \Z_{\ge 1 }$.  
%\end{prop}

%{\color{red} 
\begin{defn-prop}[cf. {\cite[3.10]{OWYgood}, \cite[4.1]{OWYrees}}]
\label{dp:Rees}
Let $I=I_W$ be an $\m$-primary integrally closed ideal of $A$ represented by a cycle $W>0$ on a resolution $Y$ of $\spec(A)$.
Then the following conditions are equivalent:
\begin{enumerate}
\item $h^1(\cO_Y(-W))=p_g(A)$.
\item $\cO_{C_Y}(-W)\cong \cO_{C_Y}$, where $C_Y$ denotes the cohomological cycle on $Y$.
\item $WC_Y=0$.

\item $I^2=QI$ for a minimal reduction $Q$ of $I$ and %{\color{red} 
$I$ is normal, namely, %}
$I^n$ is integrally closed for $n\in \Z_{\ge 1 }$.  
\end{enumerate}
If the conditions above are satisfied, we call $I$ a {\em $p_g$-ideal}.
\end{defn-prop}
For reader's convenience, we outline how (3) implies (2)  as follows. 
There exists $h\in I_W$ such that $\di_X(h)=W+H$, where $H$ has no component of $E$. Then, since $HC_Y=-WC_Y=0$, we have $\supp(H)\cap \supp(C_Y)=\emptyset$.
Therefore, $\cO_{C_Y}(-W)\cong \cO_{C_Y}(H)=\cO_{C_Y}$.

By the definition, if $A$ is a rational singularity, then every $\m$-primary integrally closed ideals are $p_g$-ideals
 (see \exref{ex:nrmpg} for an example
 of non-rational singularities whose  maximal ideals are
$p_g$-ideals).
The following theorem is 
a consequence of \cite[Corollary 11.3]{GTT} (cf. \cite[Proposition 5.4]{PWY}).
We will give another proof using the argument above.

\begin{thm}\label{t:pg}
If $\m$ is a $p_g$-ideal, then $A$ is almost Gorenstein.
\end{thm}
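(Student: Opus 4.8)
The plan is to verify the numerical criterion of \thmref{t:HP}: $A$ is almost Gorenstein precisely when $\eta_0 = \eta_1$. Since \lemref{l:codim} already provides $\eta_1 \le \eta_0 = p_g(A) - h^1(\cO_X(K_X+C_X))$, it suffices to establish the reverse inequality, and in fact I will compute $\eta_1$ exactly and find that it equals $\eta_0$. The entire argument will hinge on a single consequence of the $p_g$-ideal hypothesis: working on a resolution $X$ on which $\m$ is represented by a cycle $Z$, with $\cO_X(-Z)$ and $\cO_X(K_X+C_X)$ both generated (as arranged in the setup of this section via \proref{p:KAfree}), the characterization \dpref{dp:Rees}(1) gives the key equality $h^1(\cO_X(-Z)) = p_g(A)$.

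First I would use this equality to pin down $\Img H^0(\alpha_1)$. By \lemref{K-nZ}, the hypothesis $h^1(\cO_X(-Z)) = p_g(A)$ forces $\Img H^0(\alpha_1) = \ol{\m}U$, so that $\eta_1 = \ell_A(H^0(\cU(-Z))/\ol{\m}U) = \ell_A(\Coker H^0(\alpha_1))$. Next I would read off $\Coker H^0(\alpha_1)$ from the top horizontal row of $\cd(K_X+C_X,\omega,Z)$ in Table~\ref{tab:KD}: the cohomology sequence of $0 \to \cO_X(-Z) \to \cO_X(K_X+C_X-Z) \to \cU(-Z) \to 0$, together with $H^1(\cU(-Z)) = 0$ from \lemref{l:cdU}, identifies $\Coker H^0(\alpha_1)$ with the kernel of the surjection $H^1(\cO_X(-Z)) \to H^1(\cO_X(K_X+C_X-Z))$. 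Hence $\eta_1 = h^1(\cO_X(-Z)) - h^1(\cO_X(K_X+C_X-Z)) = p_g(A) - h^1(\cO_X(K_X+C_X-Z))$.

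To finish I would remove the twist by $-Z$ from the last $h^1$. Applying \lemref{l:2ndfund}(2) with $L_1 = -Z$ (legitimate exactly because $h^1(\cO_X(-Z)) = p_g(A)$) and $L_2 = K_X+C_X$ yields $h^1(\cO_X(K_X+C_X-Z)) = h^1(\cO_X(K_X+C_X))$. Substituting, $\eta_1 = p_g(A) - h^1(\cO_X(K_X+C_X)) = \eta_0$, and \thmref{t:HP} then delivers that $A$ is almost Gorenstein.

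The step I expect to require the most care is not any individual computation but the bookkeeping needed to run everything on one resolution: I must choose $X$ so that $\m$ is represented by $Z$ with $\cO_X(-Z)$ generated and simultaneously $\cO_X(K_X+C_X)$ generated, and then be certain that the $p_g$-ideal property of $\m$—which is intrinsic to the ideal—genuinely reads as $h^1(\cO_X(-Z)) = p_g(A)$ on this particular $X$. Once that single cohomological equality is secured, it simultaneously feeds \lemref{K-nZ} and \lemref{l:2ndfund}(2), and the remainder is the diagram chase sketched above.
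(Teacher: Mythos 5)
Your proposal is correct and follows essentially the same route as the paper: Theorem~\ref{t:HP} reduces the claim to $\eta_0=\eta_1$, Lemma~\ref{l:codim} gives $\eta_0=p_g(A)-h^1(\cO_X(K_X+C_X))$, Lemma~\ref{K-nZ} (fed by $h^1(\cO_X(-Z))=p_g(A)$ from \dpref{dp:Rees}) gives $\eta_1=h^1(\cO_X(-Z))-h^1(\cO_X(K_X+C_X-Z))$, and Lemma~\ref{l:2ndfund} removes the twist by $-Z$. Your write-up merely makes explicit the cohomology-sequence bookkeeping that the paper compresses into its citations.
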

\begin{proof}
By Theorem \ref{t:HP}, it suffices to show that $\eta_0 = \eta_1$. 
It follows from \lemref{l:codim} that $\eta_0 = h^1(\cO_X) - h^1(\cO_X(K_X+C_X)$. 
 By Lemma \ref{K-nZ}, 
we have $\Image(\al_1) = \m U$ and $\eta_1 = h^1(\cO_X(-Z))- h^1(\cO_X(K_X+C_X-Z))$.
Since $h^1(\cO_X(-Z))= p_g(A)$ by our assumption, we have $\eta_0=\eta_1$ 
by Lemma \ref{l:2ndfund}. 
\end{proof}

\begin{quest}
There exists an integer $\bar{e}_1(U)$ such that
\[\ell_A(U/\ol{\m^n} U)=n(K_X+C_X)Z - \bar{e}_1(U)\]
for sufficiently large $n$. Then \lemref{l:codim} 
and \thmref{t:HP}  imply 
 that $\bar{e}_1(U)\ge 0$. 
The proof of  Theorem \ref{th;Ugg} and
\thmref{t:pg} shows that $\bar{e}_1(U)= 0$ if $U = H^0(\cU)$ or 
$\m$ is a $p_g$-ideal. 
%So, we ask the following question: 
Is almost Gorenstein property characterized by $\bar{e}_1(U)=0$?
 We have $\bar{e}_1(U)=\lim_{n\to \infty}(\eta_0-\eta_1)$.
How can we clarify the range of $\defset{\eta_0 - \eta_n}{n\in \Z_{\ge 0}}$ or $\defset{n\in \Z_{\ge 0}}{\bar{e}_1(U) \ne \eta_0 - \eta_n}$?
\end{quest}

\section{Examples of Elliptic singularities}\label{s:Ell}

As is explained in 
%{\color{red} 
\sref{s:Ugg}, 
%}
elliptic singularities plays an important role in this article. 
Since examples of non-Gorenstein elliptic singularities are 
not well known, 
%{\color{red} 
we will explain some concrete examples of graded non-Gorenstein elliptic 
singularities. %}
%we will explain the difference of elliptic and non-elliptic 
%singularities in  some examples. }

For graded singularities (more generally, for a star-shaped singularities (\cite{TW})), elliptic singularities are characterized as follows.

\begin{prop}
[Tomari {\cite[Corollary 3.9]{tomari.max}}]\label{Tomari}
Let $C$ be a nonsingular curve and $D$ a $\Q$-divisor on $C$ with $\deg D>0$.
For every $n\in \Z_{\ge 0}$, we denote by $D^{(n)}$ the  integral part $\fl{nD}$ of $nD$. 
Let $A$ be the localization of  
\[R(C,D):=\bigoplus_{n\ge 0}H^0(C, \cO_C(D^{(n)}))T^n\subset k(C)[T]\] 
with respect to 
the maximal ideal $\m_R=\bigoplus_{n\ge 1}H^0(C, \cO_C(D^{(n)}))T^n$.
Then $A$ is an elliptic singularity if and only if  one of the following conditions holds:
\begin{enumerate}
\item $C$ is an elliptic curve and $\deg D^{(1)}\ge 0$.
\item $C$ is a rational curve and there exist integers 
%{\color{red}
$1\le m_1 < m$ such that 
$m$ is the minimum of positive integers with $\deg D^{(m)}\ge 0$, 
$\deg D^{(m_1)} = -2$ and $\deg D^{(i)} = -1$  
for $1 \le i <m$ with $i\ne m_1$.
\end{enumerate}
Note that the conditions above depend only on the genus $g=g(C)$ and the degrees of $D^{(n)}$ 
$($$n\ge 1$$)$.
\end{prop}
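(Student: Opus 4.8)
The plan is to compute $p_f(A)=1-\chi(Z_f)$ directly on the minimal good resolution, using that the $\C^*$-action coming from the grading of $R(C,D)$ makes all the relevant cycles invariant and controlled by the integers $\deg D^{(\ell)}$. First I would invoke the Pinkham--Demazure description of the resolution $X$: a central curve $E_0\cong C$ of genus $g$ carrying the $\C^*$-action, with one Hirzebruch--Jung arm of rational curves attached at each point of the support of the fractional part of $D$. For each $\ell\ge1$ let $Z_\ell$ be the smallest cycle with $\mathrm{mult}_{E_0}Z_\ell=\ell$ that is anti-nef along every arm. The two numerical facts I would establish, via the Seifert--Demazure dictionary together with the $\C^*$-weight decomposition, are
\[
Z_\ell\cdot E_0=-\deg D^{(\ell)},\qquad
\chi(\cO_{Z_\ell})=\sum_{k=0}^{\ell-1}\chi(\cO_C(D^{(k)}))=\sum_{k=0}^{\ell-1}\bigl(\deg D^{(k)}+1-g\bigr).
\]
The second identity is the cycle-level truncation of Pinkham's formula $p_g(A)=\sum_{k\ge0}h^1(\cO_C(D^{(k)}))$, obtained by splitting $H^i(\cO_{Z_\ell})$ into weight spaces whose weight-$k$ part is $H^i(\cO_C(D^{(k)}))$ for $0\le k\le\ell-1$; it forces the increment $\chi(\cO_{Z_{\ell+1}})-\chi(\cO_{Z_\ell})=\chi(\cO_C(D^{(\ell)}))$, which is compatible with the first identity.

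Next I would pin down the fundamental cycle. By the first identity $Z_\ell$ is anti-nef on all of $E$ if and only if $\deg D^{(\ell)}\ge0$; since $\deg D>0$ gives $\deg D^{(\ell)}\to\infty$, the minimum $m:=\min\{\ell\ge1:\deg D^{(\ell)}\ge0\}$ exists, and the minimality of the arm multiplicities shows that the smallest positive anti-nef cycle is $Z_m$, i.e. $Z_f=Z_m$. Therefore
\[
p_f(A)=1-\chi(\cO_{Z_m})=1-\sum_{k=0}^{m-1}\bigl(\deg D^{(k)}+1-g\bigr),\qquad m=\min\{\ell\ge1:\deg D^{(\ell)}\ge0\}.
\]

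The statement then follows by elementary bookkeeping on this one formula, using $\deg D^{(0)}=0$ and $\deg D^{(k)}\le-1$ for $1\le k<m$. If $g\ge2$, each summand is $\le1-g\le-1$, so $p_f(A)\ge g\ge2$ and $A$ is never elliptic. If $g=1$, the formula reads $p_f(A)=1-\sum_{k=1}^{m-1}\deg D^{(k)}$, which equals $1$ exactly when $m=1$, that is, when $\deg D^{(1)}\ge0$; otherwise the sum is $\le-(m-1)\le-1$ and $p_f(A)\ge2$. This is case (1). If $g=0$, the formula reads $p_f(A)=1-\sum_{k=1}^{m-1}\bigl(\deg D^{(k)}+1\bigr)$, a sum of the nonpositive integers $\deg D^{(k)}+1$ over $1\le k<m$; it equals $1$ precisely when this sum is $-1$, i.e. when exactly one index $m_1$ satisfies $\deg D^{(m_1)}=-2$ and every other $k$ with $1\le k<m$ satisfies $\deg D^{(k)}=-1$. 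This is exactly case (2), and it also explains why only the degrees $\deg D^{(i)}$ with $i\le m$ can enter the criterion.

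I expect the genuine work to lie in the geometric input of the first two paragraphs rather than in the final arithmetic: a clean derivation of the weight-space formula for $\chi(\cO_{Z_\ell})$ and of the intersection identity $Z_\ell\cdot E_0=-\deg D^{(\ell)}$, and the verification that $Z_f$ really is the invariant cycle $Z_m$ (ruling out smaller, possibly non-invariant, anti-nef cycles and controlling the arm multiplicities through the continued-fraction data of the Hirzebruch--Jung strings). Once these structural facts are secured, the trichotomy in $g$ and the extraction of the degree pattern in case (2) are immediate.
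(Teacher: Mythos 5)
The paper does not prove this proposition at all: it is imported verbatim from Tomari \cite[Corollary 3.9]{tomari.max}, so there is no internal proof to compare against. Judged on its own, your outline is correct, and it is essentially a reconstruction of the standard star-shaped--resolution computation (Pinkham/Demazure, and \cite[\S 5]{TW}), which is also the toolkit behind Tomari's original argument via the maximal-ideal-adic filtration. The two identities you isolate are indeed the crux, and both are true, but your justification ``by $\C^*$-weight decomposition'' is not formal on $X$ itself; the clean route is to factor the resolution as $\pi\: X \to X'=\Spec_C\bigl(\bigoplus_{n\ge 0}\cO_C(D^{(n)})T^n\bigr)$, note $Z_\ell=\lceil \pi^*(\ell E_0)\rceil$, and use $\pi_*\cO_X(-Z_\ell)=\bigoplus_{n\ge \ell}\cO_C(D^{(n)})T^n$ together with $R^1\pi_*\cO_X(-Z_\ell)=0$ (Lipman-type vanishing, since $-Z_\ell$ is $\pi$-nef and $X'$ has only rational singularities); this yields both $\chi(\cO_{Z_\ell})=\sum_{k=0}^{\ell-1}\chi(\cO_C(D^{(k)}))$ and $\cO_X(-Z_\ell)|_{E_0}\cong \cO_C(D^{(\ell)})$. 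Your step $Z_f=Z_m$ also goes through and deserves to be made explicit: the minimum of two cycles with the same central multiplicity that are anti-nef along the arms is again such a cycle, so any anti-nef $W>0$ with $\mult_{E_0}W=\ell$ satisfies $W\ge Z_\ell$; since $W-Z_\ell$ is then effective and supported on the arms, $Z_\ell\cdot E_0\le W\cdot E_0\le 0$, forcing $\deg D^{(\ell)}\ge 0$, i.e.\ $\ell\ge m$ and $W\ge Z_m$, while $Z_m$ itself is anti-nef. Granting these, your closed formula
\[
p_f(A)=1-\sum_{k=0}^{m-1}\bigl(\deg D^{(k)}+1-g\bigr),\qquad m=\min\{\ell\ge 1: \deg D^{(\ell)}\ge 0\},
\]
and the ensuing trichotomy in $g$ are complete and correct (in particular they recover $p_f(A)\ge g$, so no elliptic singularities exist for $g\ge 2$, and they explain the closing remark that the criterion depends only on $g$ and the degrees $\deg D^{(n)}$). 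In short: a sound proof strategy, matching in spirit the cited source, with the genuine remaining work exactly where you located it.
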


A criterion for the ring $R(C,D)$ to be Gorenstein can be described in terms of $D$ and $K_C$.

\begin{prop}
[{\cite[Corollary 2.9]{Wt}}]
\label{p:Gor}
Assume $D=\sum_{i=1}^m (r_i/q_i)P_i$, where $P_i\in C$ are distinct points, $q_i, r_i\in \Z$, $q_i>0$ and 
%{\color{red}
$\gcd(q_i, r_i)=1$
%}
 for $1\le i \le m$. 
Let $D'=\sum_{i=1}^m (1-1/q_i)P_i$ and let $a(R)$ be the $a$-invariant of $R=R(C,D)$ $($\cite{GW}$)$.
Then $R$ is Gorenstein if and only if $K_C+D'-a(R)D\sim 0$.
$($Note that $a(R)$ is characterized by this property when $R$ is Gorenstein.$)$
\end{prop}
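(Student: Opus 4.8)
The plan is to read the Gorenstein property off the graded canonical module of $R=R(C,D)$. Since $\deg D>0$, the Pinkham--Demazure construction presents $R$ as a two-dimensional normal graded domain with $\Proj R=C$; being two-dimensional and normal, $R$ is automatically Cohen--Macaulay by Serre's criterion. Hence $R$ is Gorenstein if and only if its graded canonical module $K_R$ is free of rank one, i.e. $K_R\cong R(a)$ as graded $R$-modules for some integer $a$, and such an $a$ is then forced to be the $a$-invariant $a(R)$ (it is the negative of the bottom degree of $K_R$). This already accounts for the parenthetical remark that $a(R)$ is characterized by the stated equivalence once $R$ is known to be Gorenstein.

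First I would compute each graded piece of $K_R$. Using graded local duality together with Serre duality on $C$ (equivalently, quoting Watanabe's description of the canonical module of a Demazure construction), one gets
\[
(K_R)_m\ \cong\ H^0\!\left(C,\OO_C(K_C+\ce{mD})\right)\qquad(m\in\Z).
\]
The elementary identity $\ce{r/q}=\fl{(r+q-1)/q}$, applied coordinatewise at each $P_i$, rewrites the ceiling through $D'=\sum_i(1-1/q_i)P_i$ as $\ce{mD}=\fl{mD+D'}$, so $(K_R)_m\cong H^0(C,\OO_C(K_C+\fl{mD+D'}))$, while $R_{m+a}=H^0(C,\OO_C(\fl{(m+a)D}))$. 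Thus $K_R\cong R(a)$ translates into the family of linear equivalences
\[
K_C+\fl{mD+D'}\ \sim\ \fl{(m+a)D}\qquad(\text{all }m\in\Z),
\]
realized compatibly with the $R$-action.

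The crux is to collapse this family to the single condition $K_C+D'-aD\sim 0$. For the ``if'' direction, write $K_C+D'-aD=\divv(h)$; since a principal divisor is integral, $D'-aD$ is integral, which at $P_i$ reads $a r_i\equiv q_i-1\equiv -1\pmod{q_i}$ (here $\gcd(q_i,r_i)=1$ and the exact shape of $D'$ enter). These congruences force $\{(m+a)D\}=\{mD+D'\}$ at every $P_i$ and every $m$, so the fractional parts cancel and $\fl{(m+a)D}-\fl{mD+D'}=aD-D'\sim K_C$; multiplication by a suitable homogenization of $h$ then realizes all the degreewise isomorphisms simultaneously, giving $K_R\cong R(a)$. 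For the converse I would average the family over one full period $m=0,\dots,Q-1$ with $Q=\lcm(q_i)$: because $\{mr_i/q_i\}$ equidistributes over the residues (again using $\gcd(q_i,r_i)=1$), the two sums of fractional parts $\sum_m\{(m+a)D\}$ and $\sum_m\{mD+D'\}$ cancel at each $P_i$, leaving $\sum_m\big(\fl{(m+a)D}-\fl{mD+D'}\big)=Q(aD-D')$. Since every summand is $\sim K_C$, this yields $Q(K_C+D'-aD)\sim 0$, i.e. $K_C+D'-aD\sim 0$ as $\Q$-divisors.

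The hard part is the fractional-part bookkeeping rather than any homological input: a priori the integral divisors $\fl{mD+D'}$ and $\fl{(m+a)D}$ both vary with $m$, and the real content is that a \emph{single} rational function $h$ induces compatible isomorphisms in \emph{all} degrees at once. This hinges on showing that integrality of $K_C+D'-aD$ is equivalent to the congruences $ar_i\equiv -1\pmod{q_i}$ and that these congruences make every flooring compatible; the hypotheses $\gcd(q_i,r_i)=1$ and the precise form $D'=\sum(1-1/q_i)P_i$ are exactly what is needed to control these rounding phenomena.
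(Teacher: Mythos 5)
The paper does not prove this proposition at all: it is quoted verbatim from Watanabe \cite[Corollary 2.9]{Wt}, so your attempt must be judged against the standard argument there, whose overall route you correctly follow (describe $K_R$ degreewise via $(K_R)_m\cong H^0(\OO_C(K_C+\ce{mD}))$, rewrite $\ce{mD}=\fl{mD+D'}$, and test $K_R\cong R(a)$). Your ``if'' direction is essentially sound: integrality of $K_C+D'-aD=\divv(h)$ forces $ar_i\equiv -1 \pmod{q_i}$, these congruences give the divisor identity $\fl{(m+a)D}-\fl{mD+D'}=aD-D'$ for every $m$, and multiplication by the single rational function $h$ then assembles the degreewise isomorphisms into a graded isomorphism $K_R\cong R(a)$, so $R$ is Gorenstein and necessarily $a=a(R)$.

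The converse, however, has a genuine gap, in two places. First, a graded isomorphism $K_R\cong R(a)$ does not ``translate into'' the family of linear equivalences $K_C+\fl{mD+D'}\sim\fl{(m+a)D}$: isomorphism of graded pieces is only an isomorphism of $k$-vector spaces, and equality of section spaces does not force linear equivalence of divisors (on any curve of genus $\ge 1$ one has $H^0(\OO_C)=H^0(\OO_C(P))$ inside $k(C)$, yet $P\not\sim 0$). What the isomorphism actually gives --- because $K_R$ and $R(a)$ are rank-one torsion-free graded modules over the domain $R$, so any graded isomorphism is multiplication by a single homogeneous element of the graded fraction ring --- is the equality of subspaces $H^0(\OO_C(K_C+\ce{mD}-\divv(h)))=H^0(\OO_C(\fl{(m+a)D}))$ in $k(C)$ for one fixed $h$ and all $m$; for $m\gg 0$ both divisors have degree $>2g$, hence are base-point free, and only then does equality of section spaces force \emph{equality of divisors}, $\fl{(m+a)D}-\ce{mD}=K_C-\divv(h)$. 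Second, and fatally, your last step divides a linear equivalence by $Q$: from $Q(K_C+D'-aD)\sim 0$ you conclude $K_C+D'-aD\sim 0$. This implication is false in general, since $\pic^0(C)$ has nontrivial $Q$-torsion for every $Q\ge 2$ once $g(C)\ge 1$; and the proposition really needs the honest linear equivalence, not a torsion class (your hedge ``as $\Q$-divisors'' is precisely the weaker statement). The averaging discards exactly the information that excludes torsion. The repair is to avoid averaging altogether: since $\fl{(m+a)D}-\ce{mD}$ equals the \emph{fixed} divisor $K_C-\divv(h)$ for all large $m$, its coefficient $\fl{(m+a)r_i/q_i}-\ce{mr_i/q_i}$ at $P_i$ is constant as $mr_i$ runs over all residues mod $q_i$ (here $\gcd(r_i,q_i)=1$ enters), which forces $ar_i\equiv -1\pmod{q_i}$ and hence $K_C-\divv(h)=aD-D'$ exactly, i.e.\ $K_C+D'-a(R)D=\divv(h)\sim 0$.
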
 

We give some examples of non-Gorenstein elliptic singularities using \proref{Tomari} and \proref{p:Gor}.
%}

\begin{ex} Let $R(C,D)$ and $A$ be as in Proposition \ref{Tomari} and we put $C= \PP^1$ with rational  
function field $k(C) = k(x)$.  
%{\color{red}%KW0720
We denote by $(a)$ and $(\infty)$, where $a\in k$, the point divisors such that $\divv_C(x-a) = (a) - (\infty)$. %}
\begin{enumerate} 
\item If $D = 1/2 (\infty) + 1/3 (0) - 1/3 [ (1) + (a) ]$, then we can check that $A$ is an elliptic 
singularity and 
not Gorenstein.  The graded ring 
$R( \PP^1, D)$ is generated by $4$ elements  
%{\color{blue}
\[X = \frac{(x-1)(x-a)}{x}T^3 \in R_3, \quad
Y = \frac{((x-1)(x-a))^2}{x} T^6 \in R_6,\]
\[Z=  \frac{((x-1)(x-a))^3}{x^2}T^8 \in R_8, \quad
W = \frac{((x-1)(x-a))^4}{x^3}T^{10} \in R_{10}\]
%}
%$X = (x-1)(x-a)/x T^3 , 
%Y = ((x-1)(x-a))^2/x T^6, Z=  ((x-1)(x-a))^3/x^2 T^8, W = ((x-1)(x-a))^4/x^3 T^{10}$ 
 and we see that 
$R( \PP^1, D) \cong k[X,Y,Z,W]/I_2(M)$, where $I_2(M)$ is the ideal generated
 by the three maximal minors of the matrix 
%{\color{blue} 
\[M=\begin{pmatrix} Y & Z & W \\ Z & W & Y^2 - (a+1)X^2Y + aX^4
\end{pmatrix}.\]  
%}    
 \item If $D = 6/7(\infty) - 3/7(0) - 2/7 (1)$, then we can see that $A$ is an elliptic singularity 
 and $R(\PP^1, D)$ is generated by $6$ elements of 
 degree $6,7,7,9,10,11$, respectively. 
 
We can make examples of elliptic singularities with arbitrary high embedding dimension 
putting $D = a/p (\infty) - b/p(0) - c/p (1)$ such that $p$ is a prime number, $a> b+c$
 and $p> a > p/2$.   
\end{enumerate}
\end{ex} 

%{\color{red}
We give an example of elliptic singularity whose maximal ideal is not $p_g$-ideal; this example shows that the converse of \thmref{t:pg} does not hold.
%}

\begin{ex}\label{ex:ellL=2}
Let $R:=R(C,D)$ and $A$ be as in Proposition 5.1. 
Assume that $C$ is an elliptic curve, $\{P_0, \dots, P_d\}\subset C$ 
%{\color{red}
($d\ge 1$)
%}
 are distinct points, and that
\[
D=d P_0 - \sum_{i=1}^d \frac{1}{2}P_i, \quad D_0 := d P_0  - \sum_{i=1}^d P_i \not \sim 0.
\]
Then $D^{(n)}=nD_0+\sum_{i=1}^d \fl{\frac{n}{2}}P_i$ and  $\deg D^{(n)}=d \fl{\frac{n}{2}}$.
We see that $A$ is an elliptic singularity with $p_g(A)=1$ and not Gorenstein.
In particular, $A$ is almost Gorenstein by \thmref{t:elliptic}.
The Poincar\'e series $ P(R,t) = \sum_{n\ge 0} h^0(\cO_C(D^{(n)})) t^n$ 
of $R$ can be expressed as 
\begin{equation}
P(R,t) = 1 +  \sum_{m\ge 1} dm (t^{2m}+t^{2m+1})  = 
\begin{cases} \frac{1+(d-2) t^2+d t^3+t^4}{(1-t^{2})^2}, & (d\ge 2)\\
              \frac{1+ t^4+t^5}{(1-t^{2})(1-t^3)}  & (d=1).  
\end{cases}
\end{equation}

%{\color{red}
Suppose that $X$ is the minimal resolution.
Then $E$ can be expressed as $E=E_0+\sum_{i=1}^dE_i$, where $E_0\cong C$ with $E_0^2=-d$ and $E_i\cong \PP^1$ with $E_i^2=-2$, $E_0E_i=1$ for $1 \le i \le d$. (cf. \cite[\S 6]{TW}).
Note that $E_0$ is the cohomological cycle.
Let $Z=E+E_0$.
Then $Z$ is anti-nef and $H^0(\cO_X(-Z))=\m$.

Assume that $d=1$.
Then we have $h^0(\cO_C(D^{(2)}))=h^0(\cO_C(D^{(3)}))=1$.
Therefore, $\cO_X(-Z)$ has a base point $p\in E_0\setminus E_1$. 
Let $\phi\: X' \to X$ be the blowing up at $p$ with exceptional divisor $F$ and let $Z'=\phi^*Z+F$.
Then $\cO_{X'}(-Z')$ is generated by $f\in H^0(\cO_C(D^{(2)}))$ and $g\in H^0(\cO_C(D^{(3)}))$, and thus $\m$ is represented by $Z'$.
Since the proper transform $E_0'$ of $E_0$ is the cohomological cycle and $Z'E_0'=0$, it follows from \dpref{dp:Rees} that $\m$ is a $p_g$-ideal. 
This case will be treated in \exref{ex:d=1} as a determinantal singularity.

Next assume that $d\ge 2$. 
Since $\cO_C(D^{(2)})$ is generated, $\m$ is represented by the cycle $Z$.
In fact,  if $f,g\in H^0(\cO_C(D^{(2)}))$ are general elements, then $Q=(f,g)A$ is a minimal reduction for $\m$, and they generate $\cO_X(-Z)$.
Since $ZE_0=-d$, $\m$ is not $p_g$-ideal by \dpref{dp:Rees}.
Moreover, by \cite[3.14, 3.16]{ORWY}, we have $\ell_A(\ol{\m^2}/Q\m)=1$ and the following:
\begin{enumerate}
\item If $d=2$, then $\ol{\m^2}\ne \m^2=Q\m$.
\item If $d\ge 3$, then $\ol{\m^2}= \m^2 \ne Q\m$, and $\m$ is normal.
\end{enumerate}
%}
\end{ex}
%}

\section{Examples of cone singularities}
\label{s:example}

In this section, we will study the almost Gorenstein property 
for \lq\lq cone  singularities",
which are defined by an ample divisor $D$ on a smooth curve $C$ of genus $g$. 
If $g=0$ (resp. $g=1$), then the singularities are rational (resp. elliptic).
{\bf So we will always assume that $g\ge 2$ in this section. }

We will use the notation from the preceding sections and set up the following:

%\begin{ass}\label{Cone;SetUp}
\begin{not-prop}\label{Not6.1} 
\begin{enumerate}
\item  Let $C$ be a curve of genus $g\ge 2$ and $D$ a divisor on $C$ with $\deg D >0$.
Then $\cO_C(K_C)$ is generated. 

\item Let $R=R(C,D):=\bigoplus_{n\ge 0}H^0(C, \cO_C(nD))$ and 
$\m_R=\bigoplus_{n\ge 1}H^0(C, \cO_C(nD))$. \par
The $a$-invariant $a(R)$ is defined by $a(R) = \max\{n \;|\; [H^2_{\m_R}(R)]_n \ne 0\;\}$
(\cite{GW}). Also, by \cite[Theorem 2.8]{Wt}, we have 
\[
K_R=\bigoplus_{n\ge -a(R)} H^0(C, \cO_C(K_C+nD)).
\]  

%}

\item 
We define $A$ as the localization of $R$ with respect to the maximal ideal $\m_R$.
Then the exceptional set of the minimal resolution of $A$ is isomorphic to $C$, and we have $p_f(A)=g$, $p_g(A)= \sum_{n= 0}^{a(R)}h^1(C, \cO_C(nD))$  (cf. \cite[Theorem 3.3]{Wt}, or \sref{s:det}) and 
\begin{equation}\label{eq:type}
\type(A) = \dim_k K_R / \m_R K_R.
\end{equation}

\item (\cite{Wt})  Let $X$ be the minimal resolution of 
 $\spec (R)$
 with unique exceptional curve 
$E\cong C$. Then we can write  
\[X = \Spec_C ( \bigoplus_{n\ge 0} \cO_C(nD) T^n)) \; {\rm and} \; 
\cO_X ( rE) \cong  ( \bigoplus_{n\ge -r } \cO_C(nD) T^n)^{\widetilde{}}.\]
 Namely, $X$ is an $\bbA^1$-bundle over $C$.
Hence there are exact sequences 
\[
0\to H^0(\cO_{X}(-(n+1)E)) \to H^0(\cO_{X}(-nE)) 
\to H^0(\cO_{E}(-nE)) \to 0
\]

for every $n \in \Z$ and we have $\cO_{E}(-nE) \cong \cO_C( nD)$.

\item Since $\cO_X( K_X + E)\otimes_{\cO_X}\cO_E \cong \cO_C( K_C)$, 
%{\color{red} 
we have 
\[
H^0(X, \cO_X(K_X + rE) ) \cong \bigoplus_{n\ge -r +1}H^0( C, \cO_C(K_C + nD)) \quad (r\in \Z).
\] 
In particular, we have $C_X = (a(R)+1) E$.
% and $\cO_X( K_X + (a+1)E)$ is generated.        
%}
\end{enumerate}
\end{not-prop}

\begin{rem}
(1) If $\deg D > \deg K_C$, then $H^1(C, \cO_C(nD))=0$ for $n\ge 1$ and 
%{\color{red} 
$a(R)=0$. %$K_R=\bigoplus_{n\ge 0}H^0(C, \cO_C(K_C+nD))$.

(2) If $U = H^0(X, \cU)$ (cf. \sref{s:Ugg}), then $R$ is Gorenstein.
In fact, if the equality holds, then we have the isomorphism of graded modules
\begin{gather*}
H^1(\cO_X) = \bigoplus_{n=0}^{a} H^1(C, \cO_C(nD)) \\
\to H^1(\cO_X(K_X+(a+1)E)) = \bigoplus_{n\ge -a}^0 H^1(C, \cO_C(K_C+nD))
\end{gather*}
of degree $-a$, where $a=a(R)$.
Since this implies the isomorphisms $H^i(C, \cO_C)\cong H^i(C, \cO_C(K_C-aD))$ for $i=0,1$, we have $K_C\sim aD$. 
%}
\end{rem}

The following theorem of Stanley is very important in our theory. 

\begin{thm}[Stanley's Theorem; \cite{St-recip}]\label{Stan-Thm} 
Let $S = \bigoplus_{n\ge 0} S_n$ be a Cohen-Macaulay graded ring of dimension $d$ 
with $S_0 =k$, a field and let 
%{\color{red}
the Poincar\'e series of $S$ be 
%}
\[P(S, t) := \sum_{n\ge 0}\dim_k S_n t^n = \dfrac{h_0 + h_1t + \cdots + h_st^s}
{\prod_{j = 1}^d ( 1 - t^{b_j})}.\]
Then the Poincar\'e series of the canonical module $K_S$ is given by 
\[
P(K_S, t) = (-1)^d P(S, t^{-1}) = 
%{\color{red}
\frac{h_st^{B-s} + \cdots + h_1t^{B-1}+ h_0 t^B}{\prod_{j = 1}^d ( 1 - t^{b_j})}, 
%}
\]
%{\color{red}
where $B = \sum_{j=1}^d b_j$.     
\end{thm}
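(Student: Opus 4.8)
The plan is to reduce the identity to the Artinian case by killing a homogeneous system of parameters, and then to exploit the fact that the canonical module of a graded Artinian $k$-algebra is nothing but its graded $k$-dual. First I would choose a homogeneous system of parameters $\theta_1,\dots,\theta_d$ of $S$ with $\deg\theta_j=b_j$ (the denominator of the given expression for $P(S,t)$ records exactly such degrees). Since $S$ is Cohen-Macaulay, $\theta_1,\dots,\theta_d$ is a regular sequence, so applying the short exact sequences $0\to S(-b_j)\xrightarrow{\theta_j}S\to S/\theta_jS\to 0$ successively shows that the Artinian quotient $\ol S:=S/(\theta_1,\dots,\theta_d)S$ satisfies $P(\ol S,t)=\prod_{j=1}^d(1-t^{b_j})\,P(S,t)=h_0+h_1t+\cdots+h_st^s$. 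In particular $\ol S$ is concentrated in degrees $0,\dots,s$, so its $a$-invariant is $a(\ol S)=s$.

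Next I would transport this to the canonical module. Because $K_S$ is a maximal Cohen-Macaulay $S$-module, the same sequence $\theta_1,\dots,\theta_d$ is $K_S$-regular, whence $P\bigl(K_S/(\theta_1,\dots,\theta_d)K_S,\,t\bigr)=\prod_{j=1}^d(1-t^{b_j})\,P(K_S,t)$. The key input is the standard compatibility of canonical modules with reduction modulo a regular sequence: there is a graded isomorphism $K_S/(\theta_1,\dots,\theta_d)K_S\cong K_{\ol S}(\sigma)$ for some degree shift $\sigma$. To pin down $\sigma$ I would compare $a$-invariants, using $a(\ol S)=a(S)+\sum_jb_j=a(S)+B$; since the lowest nonzero degree of $K_S$ equals $-a(S)$ by graded local duality and this bottom degree is unchanged by the reduction, matching lowest degrees forces $\sigma=-B$. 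Finally, for the Artinian ring $\ol S$ one has $K_{\ol S}=\Hom_k(\ol S,k)$ with $[K_{\ol S}]_n=\Hom_k((\ol S)_{-n},k)$, so that $P(K_{\ol S},t)=P(\ol S,t^{-1})$.

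Combining these gives $\prod_{j}(1-t^{b_j})\,P(K_S,t)=t^{B}P(\ol S,t^{-1})=t^{B}\prod_{j}(1-t^{-b_j})\,P(S,t^{-1})$, and the elementary identity $t^{B}\prod_{j}(1-t^{-b_j})=\prod_{j}(t^{b_j}-1)=(-1)^d\prod_{j}(1-t^{b_j})$ lets me cancel $\prod_{j}(1-t^{b_j})$ on both sides to conclude $P(K_S,t)=(-1)^dP(S,t^{-1})$; substituting the explicit rational expression and clearing $t^{B}$ then produces the stated numerator $h_st^{B-s}+\cdots+h_0t^{B}$. The main obstacle is the canonical-module compatibility $K_S/(\theta_1,\dots,\theta_d)K_S\cong K_{\ol S}(-B)$: establishing it cleanly requires either graded local duality (identifying both sides with duals of the relevant local cohomology modules via $H^d_{\m}(S)(-b)\to H^d_{\m}(S)$ together with $H^{d-1}_{\m}(S/\theta S)$) or the graded change-of-rings formula for $\Ext$, and in either approach the grading shift must be tracked carefully. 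Everything else is formal manipulation of Poincar\'e series.
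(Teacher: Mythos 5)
The paper itself offers no proof of this statement: it is quoted as a known theorem of Stanley, so your argument can only be measured against the classical proof, which it essentially reproduces (reduction to the Artinian case by a homogeneous system of parameters, graded Matlis duality $K_{\ol S}=\Hom_k(\ol S,k)$, and careful tracking of degree shifts). That skeleton is correct, and your determination of the shift $\sigma=-B$ by matching $a$-invariants and bottom degrees is sound.

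There is, however, one genuine flaw in your opening move: you assume that a homogeneous system of parameters $\theta_1,\dots,\theta_d$ with $\deg\theta_j=b_j$ exists, asserting that the denominator of the given expression for $P(S,t)$ ``records exactly such degrees.'' The theorem as stated applies to an \emph{arbitrary} representation of $P(S,t)$ with polynomial numerator, and such a representation need not come from any system of parameters. Concretely, take $S=k[t^2,t^3]\cong k[x,y]/(y^2-x^3)$ with $\deg t^i=i$; then $P(S,t)=(1-t+t^2)/(1-t)$ is a legitimate representation with $d=1$, $b_1=1$, but $S_1=0$, so no parameter of degree $1$ exists and your reduction cannot start. (Over finite fields the same failure occurs even for standard graded rings, e.g.\ $\F_2[x,y]/\bigl(xy(x+y)\bigr)$ with denominator $1-t$: every nonzero linear form lies in a minimal prime.) The repair is easy and should be made explicit: choose \emph{any} homogeneous system of parameters, of whatever degrees $e_1,\dots,e_d$ graded Noether normalization provides, and run your argument verbatim with the $e_j$ in place of the $b_j$. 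This yields the representation-free identity $P(K_S,t)=(-1)^dP(S,t^{-1})$, and the displayed rational expression in the prescribed $b_j$'s then follows by the purely formal manipulation you already perform at the end (multiply numerator and denominator of $P(S,t^{-1})$ by $t^{B}$). With that one correction, your proof is complete.
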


%{\color{red}
\begin{conv}
Let $S = \bigoplus_{n\ge 0} S_n$ 
be a graded ring with $S_0 =k$.
Then we say that $S$ is almost Gorenstein if the localization of $S$ with respect to the maximal ideal $\m_S=\bigoplus_{n\ge 1} S_n$ is an almost Gorenstein local ring.

For example, if $S$ is almost Gorenstein graded ring in the sense of \cite[Definition 1.5]{GTT}, then $S$ is almost Gorenstein because the multiplicity of a graded $S$-module coincides with that of the localization with respect to $\m_S$.
\end{conv}
%}
We give examples of cone singularities which are 
almost Gorenstein and not Gorenstein. 

\subsection{Level Cone Singularities and Standard Graded Sings.}
\phantom{AAAAAAA}\par 

The concept of {\bf level rings} are usually used for standard graded rings. 
Here we use this word in 
%{\color{red} 
the following sense.

\begin{defn}\label{level} 
We say that $R= R(C,D)$ is {\bf level} if $\cO_C(D)$ is 
generated and $K_R$ is generated by $[K_R]_{-a(R)}$ as an $R$-module. 
The condition \lq\lq $\cO_C(D)$ is 
generated" is equivalent to the condition that
we can take $f_1, f_2 \in R_1$ so that $Q = (f_1, f_2)R$ is an $\m_R$-primary
 ideal of $R$. If $R$ is Gorenstein and if $\cO_C(D)$ is 
generated, then $R$ is trivially level.
\par
If $R$ is level, then $\type (R) = \dim_k [K_R]_{-a(R)} = h^0(\cO_C(K_C - a(R)D))$. 
\end{defn} 
%}

%%{\color{red} 
%\begin{defn}[{\cite[Definition 1.5]{GTT}}]
%Let $S = \bigoplus_{n\ge 0} S_n$ 
%be a Cohen-Macaulay graded ring of dimension $d$ with $S_0 =k$.
%Then $S$ is said to be almost Gorenstein if there exists an exact sequence 
%\[
%0\to S \to K_S(-a(S)) \to U \to 0
%\]
%of graded $S$-modules with $\mu_S(U) = e_{S}(U)$, where $\mu_S$ denotes the number of elements in a minimal system of generators of the $S$-module $U$ and $e_{S}$ the multiplicity with respect to the homogeneous maximal ideal $\m_S \subset S$.
%
%Note that if $S$ is almost Gorenstein , then so is the localization of $S$ with respect to $\m_S$ because the multiplicity of a graded $S$-module coincides with that of the localization with respect to $\m_S$.
%\end{defn}
%%}

We refer a result of A. Higashitani giving a sufficient condition for a standard 
graded ring to be almost Gorenstein.

\begin{thm}[\cite{Hig} Theorem 3.1]
\label{Higashi}
 Let $k$ be a field and let $S = \bigoplus_{n\ge 0} S_n$ be a 
Cohen-Macaulay graded ring of dimension $d$ satisfying the condition $S= k[S_1]$. 
Let the Poincar\'e series of $S$ be 
\[ P(S,t) :=\sum_{n\ge 0} \dim_k S_n t^n = \frac{h_0 + h_1t + \cdots + h_st^s}{(1-t)^d}.\]
If $h_i = h_{s-i}$ for $i =0, 1, \ldots , [s/2] -1$, then $S$ is almost Gorenstein.
\end{thm}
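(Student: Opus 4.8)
The plan is to exhibit, in the graded category, a short exact sequence
\[
0 \to S \xrightarrow{\ \times \xi\ } K_S(-a) \to C \to 0
\]
of graded $S$-modules (with $a=a(S)$ and the map of degree $0$) in which $C$ is an Ulrich module; by the Convention stated above together with the graded notion of almost Gorensteinness of \cite[Definition 1.5]{GTT}, this is precisely what must be produced. Here $\xi$ is a general element of the bottom graded piece $[K_S]_{-a}=[K_S(-a)]_0$, which is nonzero of dimension $h_s$. Since $S=k[S_1]$ and we may assume $k$ infinite (harmless for almost Gorensteinness), in the situations we care about $S$ is generically Gorenstein (e.g. a normal domain, as for the cone singularities $R(C,D)$), so $K_S$ has rank one and multiplication by $\xi$ is injective with cokernel $C$ of dimension at most $d-1$.

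First I would compute Hilbert series. By \thmref{Stan-Thm} with $b_1=\cdots=b_d=1$, we have $B=d$ and
\[
P(K_S(-a),t)=t^{a}P(K_S,t)=\frac{h_s+h_{s-1}t+\cdots+h_0t^{s}}{(1-t)^d},
\]
so that
\[
P(C,t)=P(K_S(-a),t)-P(S,t)=\frac{\sum_{i=0}^{s}(h_{s-i}-h_i)\,t^{i}}{(1-t)^d}.
\]
Writing $c_i:=h_{s-i}-h_i$, the relation $c_{s-i}=-c_i$ is automatic, and the hypothesis $h_i=h_{s-i}$ for $0\le i\le [s/2]-1$ forces $c_i=0$ for $i\le [s/2]-1$ and, by antisymmetry, for $i\ge s-[s/2]+1$. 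If $s=2m$ is even, only $c_m=0$ remains, so $P(C,t)=0$, whence $C=0$ and $S$ is Gorenstein. If $s=2m+1$ is odd, only $c_m$ and $c_{m+1}=-c_m$ survive, the numerator equals $c_m t^{m}(1-t)$, and
\[
P(C,t)=\frac{c_m\,t^{m}}{(1-t)^{d-1}},\qquad c_m=h_{m+1}-h_m.
\]
The cancellation of one factor $(1-t)$ is the conceptual heart of the argument: the near-palindromicity of the $h$-vector drops the dimension of $C$ by exactly one.

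Next I would prove that $C$ is Cohen--Macaulay of dimension $d-1$ and deduce that it is Ulrich. Both $S$ and $K_S(-a)$ are maximal Cohen--Macaulay of dimension $d$, so the long exact sequence of local cohomology attached to the displayed short exact sequence yields $H^i_{\m_S}(C)=0$ for $i\le d-2$; hence $\depth C\ge d-1$, and combined with $\dim C\le d-1$ this forces $C$ to be Cohen--Macaulay of dimension $d-1$ (when $C\ne 0$). Choosing a linear system of parameters $\ell_1,\dots,\ell_{d-1}\in S_1$ for $C$, which is a regular sequence by Cohen--Macaulayness, the quotient $C/(\ell_1,\dots,\ell_{d-1})C$ has Hilbert series $(1-t)^{d-1}P(C,t)=c_m t^{m}$; it is therefore concentrated in the single degree $m$ with $k$-dimension $c_m$, so $\m_S$ annihilates it. Reading off multiplicity and minimal number of generators, $e_0(C)=c_m=\mu(C)$, so $C$ is Ulrich and $S$ is almost Gorenstein.

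The main obstacle is the structural input underlying the Hilbert-series bookkeeping: that the chosen map $S\to K_S(-a)$ is injective with cokernel supported in codimension one. For a normal domain this is immediate since $K_S$ is torsion free of rank one; in the general Cohen--Macaulay setting of the theorem it requires $S$ to be generically Gorenstein, so that $\rank K_S=1$ and a general $\xi\in[K_S]_{-a}$ avoids the minimal primes. Once this is secured, the depth estimate via local cohomology and the reduction modulo a linear system of parameters are routine, and the entire difficulty concentrates in the single $(1-t)$ cancellation produced by the symmetry hypothesis on $(h_0,\dots,h_s)$.
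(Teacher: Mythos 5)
This theorem is not proved in the paper at all: it is quoted from Higashitani \cite{Hig} and used as a black box, so there is no internal proof to compare yours against. Judged on its own, your argument is correct and is essentially the standard (and, as far as I can tell, Higashitani's own) route: embed $S$ into $K_S(-a)$ by an element $\xi$ of the bottom graded piece $[K_S]_{-a}$; use Stanley reciprocity (\thmref{Stan-Thm}) to write the cokernel's Hilbert series as $P(C,t)=\sum_{i}(h_{s-i}-h_i)t^i/(1-t)^d$; observe that the hypothesis kills every coefficient except $c_m=-c_{m+1}=h_{m+1}-h_m$ when $s=2m+1$ (and all of them when $s$ is even, which is the Gorenstein case), so $P(C,t)=c_mt^m/(1-t)^{d-1}$; then the local-cohomology depth count shows $C$ is Cohen--Macaulay of dimension $d-1$, and reduction modulo a linear system of parameters $\ell_1,\dots,\ell_{d-1}$ is concentrated in the single degree $m$, whence $\m_S C=(\ell_1,\dots,\ell_{d-1})C$ and $e_0(C)=\mu(C)=c_m$, i.e.\ $C$ is Ulrich. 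All of these steps check out.

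The one substantive point is the injectivity of $\times\xi$, and you are right to dwell on it: it is not merely an obstacle inside your proof, it reflects a hypothesis missing from the statement as quoted. By \cite[Lemma 3.1]{GTT}, an exact sequence $0\to S\to K_S\to C\to 0$ with $\dim C\le d-1$ forces $S_{\frak{p}}\cong(K_S)_{\frak{p}}$ at every minimal prime $\frak{p}$, so an almost Gorenstein ring must be generically Gorenstein; consequently $S=k[x,y,z]/(x^2,xy,y^2)$ (Cohen--Macaulay, standard graded, $h$-vector $(1,2)$, for which the symmetry hypothesis is vacuous since $[s/2]-1=-1$) satisfies the quoted hypotheses but is not almost Gorenstein. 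So a hypothesis of the kind you impose (generic Gorensteinness, with $k$ infinite so that general elements and linear systems of parameters exist) is genuinely needed; it is present in Higashitani's original formulation, and its omission here is harmless only because the paper applies the theorem to the normal domains $R(C,D)=S/I_2(M)$. One refinement you should make: for a generically Gorenstein non-domain, the assertion that a general $\xi\in[K_S]_{-a}$ ``avoids the minimal primes'' does not follow from $\rank K_S=1$ alone; one needs that for each minimal prime $\frak{p}$ some element of degree exactly $-a$ (not merely some homogeneous element) generates $(K_S)_{\frak{p}}$, so that the bad locus is a proper subspace of $[K_S]_{-a}$. For domains---the only case this paper uses---any nonzero $\xi$ works and your proof is complete.
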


%{\color{red}
\begin{ex}
[cf. {\cite[Lemma 10.2]{GTT}}]
\label{level=nAG}
If $R$ is level and  not Gorenstein (and $g\ge 2$), then $R$ is {\bf not} almost Gorenstein. 
In fact, using Stanley's Theorem, we can show that $e_0(U) > \mu(U) = \type(R) -1$, where $U = K_R/ w R$  for some $w\in [K_R]_{-a(R)} \setminus\{0\}$. 
\end{ex}
%}

\begin{rem} If $R(C,D)$ is an elliptic singularity, then $R$ is a \lq\lq simple elliptic  singularity" and is Gorenstein.  
\end{rem}

The following cases are typical examples of level rings. 

%{\color{red}
\begin{prop}\label{ex:notAG}
$R=R(C,D)$
 is level and hence not almost Gorenstein 
 in the following cases.

\begin{enumerate}
\item [(a)]  $\deg D > 2 \deg (K_C)$.
\item [(b)]
Let $R'$ be a standard graded Gorenstein ring with $a(R')>0$
and  $R(C,D) = (R')^{(r)}$ is an $r$-th Veronese subring with $r \ge 2$ and $r$ does not 
divide $a(R')$.
\end{enumerate}  
\end{prop}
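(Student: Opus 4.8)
The plan is to reduce Proposition~\ref{ex:notAG} to the assertion that $R = R(C,D)$ is level in each of the two cases, since Example~\ref{level=nAG} already establishes that a level, non-Gorenstein cone singularity with $g \ge 2$ fails to be almost Gorenstein. Thus the real content is to verify the two defining conditions of \defref{level}: that $\cO_C(D)$ is generated by its global sections, and that the canonical module $K_R$ is generated in its lowest degree $-a(R)$ as an $R$-module. In both cases I must also check $R$ is not Gorenstein, so that Example~\ref{level=nAG} applies; for $g \ge 2$ this will follow from the Gorenstein criterion in \proref{p:Gor} (equivalently, from the fact that the canonical module has type $>1$).

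For case (a), $\deg D > 2\deg K_C$, I would first observe that $\deg D > \deg K_C$ already forces $a(R) = 0$ by the Remark following \notaref{Not6.1}, so $H^1(C,\cO_C(nD)) = 0$ for all $n \ge 1$ and $K_R = \bigoplus_{n \ge 0} H^0(C,\cO_C(K_C + nD))$ with lowest degree piece $[K_R]_0 = H^0(C,\cO_C(K_C))$. Generation of $\cO_C(D)$ is automatic because $\deg D > 2g$ makes $D$ very ample (degree exceeding $2g-1$ already guarantees base-point-freeness). The essential step is to show $K_R$ is generated in degree $0$, i.e.\ that the multiplication maps $H^0(\cO_C(K_C)) \otimes H^0(\cO_C(nD)) \to H^0(\cO_C(K_C + nD))$ are surjective for all $n \ge 1$; this is exactly the kind of surjectivity of a multiplication map on a curve that follows from a Castelnuovo--Mumford regularity or base-point-free-pencil-trick argument, using that $\deg D$ is large relative to $\deg K_C$ (here $\deg D > 2\deg K_C$ is the hypothesis that makes the numerics work).

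For case (b), where $R(C,D) = (R')^{(r)}$ is the $r$-th Veronese subring of a standard graded Gorenstein ring $R'$ with $a(R') > 0$, $r \ge 2$, and $r \nmid a(R')$, the plan is to exploit the standard description of the canonical module of a Veronese subring: $K_{(R')^{(r)}} = (K_{R'})^{(r)}$ in the appropriate graded sense, with the grading rescaled by $r$. Since $R'$ is Gorenstein, $K_{R'} \cong R'(a(R'))$ is principal, so $K_R$ is the $r$-th Veronese of a shifted copy of $R'$; the lowest-degree generators of this module then sit in a single degree, which gives levelness, while the non-divisibility $r \nmid a(R')$ is precisely what obstructs $K_R$ itself from being free (rank-one and principal), hence obstructs Gorensteinness of $R$. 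Generation of $\cO_C(D)$ for the Veronese follows because $R = k[R_1]$ is standard graded by construction of the Veronese.

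The main obstacle I expect is the surjectivity of the multiplication maps governing generation of $K_R$ in degree $-a(R)$ in case (a): this is where one genuinely needs the quantitative hypothesis $\deg D > 2\deg K_C$ rather than a soft argument, and it requires a careful regularity estimate on the curve $C$. In case (b) the analogous step is cleaner because the Veronese and Gorenstein structure make the canonical module explicit, so there the subtlety is instead bookkeeping the grading shift and confirming that $r \nmid a(R')$ translates correctly into the failure of $K_R$ to be cyclic. Once levelness is in hand in both cases, \emph{non}-Gorensteinness and the conclusion are immediate from Example~\ref{level=nAG}.
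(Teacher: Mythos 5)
Your proposal is correct and takes essentially the same route as the paper: for (a) you reduce levelness to surjectivity of the multiplication maps $H^0(\cO_C(K_C))\otimes H^0(\cO_C(nD))\to H^0(\cO_C(K_C+nD))$, which is precisely the paper's Lemma~\ref{l:generation}(1), proved by the base-point-free pencil trick you name (and the numerics you worry about are immediate: $h^1(\cO_C(nD-K_C))=h^0(\cO_C(2K_C-nD))=0$ since $\deg(2K_C-nD)<0$ when $\deg D>2\deg K_C$); for (b) you give the paper's argument verbatim, using $K_R\cong (K_{R'})^{(r)}$ and standard gradedness of $R'$ to see that $K_R$ is generated by $(K_{R'})_m$ with $m=-r\fl{a(R')/r}$, with $r\nmid a(R')$ forcing type $\ge 2$. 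The only difference is emphasis: the step you flag as the main obstacle in (a) is a one-line consequence of the hypothesis rather than a delicate regularity estimate.
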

\begin{proof} 
The condition (a) implies that $\cO_C(D)$ is 
generated. Hence by \lemref{l:generation}, $R$ is level in case (a). 
Next we consider the case (b).
 Since $K_{R'}$ is generated by an element $w\in (K_{R'})_{-a(R')}$ and since $R'$ is standard graded, $K_R \cong (K_{R'})^{(r)}$ \cite[(3.1.3)]{GW}
is also generated by elements of 
%{\color{red}
$(K_{R'})_{m}$, where $m=-r \fl{a(R')/r}$. 
%is the largest integer smaller than $a(R)/r$.  
%}
\end{proof}
%}

%We ask \lq\lq If $R=R(C,D)$ is a {\bf non-Gorenstein} standard graded ring with $g(C)\ge 2$, 
%then $R$ is not almost Gorenstein ?" but we can make such examples. We will see later 
%that if $g=2$, then we do not have such examples. 
%So, we think that  a standard graded ring $R$ which is not Gorenstein and almost Gorenstein are limited.  

\begin{ex}[cf. {\cite[Corollary 7.10]{GTT}}]
 Let $S$ be a polynomial ring with $4$ variables, generated by elements of degree $1$ 
over $k$.
%{\color{red}
 Let 
\[M=\begin{pmatrix} l_1 & l_2 & l_3 \\ g_1 & g_2 & g_3
\end{pmatrix},\]
where $l_1, l_2, l_3 \in S$ are linear forms and  $g_1,  g_2,  g_3\in S$ are homogeneous polynomials 
of degree $s \ge 2$, and let $I_2(M)$ denote the ideal generated by maximal minors of $M$.
Let $R = S/I_2(M)$.
%}
 We assume that $R$ is normal. Then $R$ is almost Gorenstein. \par
Actually, the minimal free resolution of $R$ over $S$ is 
\[
0 \to S(-2s-1)\oplus S(-s-2)     \to S( -s-1)^{\oplus 3} \to S \to R \to 0.
\] 
%{\color{red}
Therefore,
%} 
we have 
\begin{align*}
P(R, t) &= \dfrac{1- 3t^{s+1} + t^{s+2} + t^{2s+1}}{(1-t)^4}\\
&= \frac{1+ 2t + \cdots  + (s+1)t^s + (s-1) t^{s+1} +(s-2) t^{s+2}+ \cdots +t^{2s-1}}{(1-t)^2}.
\end{align*}
Hence $R$ is almost Gorenstein by \thmref{Higashi}.\par
%{\color{red} 
Note that in this example, $g(C) = \dim_k [K_R]_0$ can be computed by \thmref{Stan-Thm}.
%}

\end{ex}

\begin{quest} For what $g$, does there exist $R(C,D)$, which is standard graded, 
not Gorenstein, and almost Gorenstein with $g(C) = g$? We will show later that 
there is no such $R$ if $g=2$.
We think that  a standard graded ring $R$ with $g\ge 2$,  which is not Gorenstein and almost Gorenstein are very few. 
\end{quest}

\subsection{The case $D = K_C + P$} 
\phantom{AAAAAAA}\par 

Next, we give an example of $R(C,D)$ which is almost Gorenstein if $C$ is hyperelliptic, 
and not almost Gorenstein if $C$ is not hyperelliptic. 
This example shows that the almost Gorensteinness cannot be determined by the pair $(g, \deg D)$, and depends on the structure of $C$. %}

\begin{thm}\label{p:K+p}
Assume that $P\in C$ is a point and we put $D=K_C+P$.
Then $A$ is not Gorenstein,  $p_g(A)=g$, and $e_0(U)=2g-2$.
 \begin{enumerate}
\item If $C$ is not hyperelliptic, then  
%{\color{red}
$\mu(U)= g$; %}
hence $A$ is not almost Gorenstein.
\item If $C$ is hyperelliptic, then  $\mu(U)= 2g- 2$; hence $A$ is almost Gorenstein.
\end{enumerate} 
\end{thm}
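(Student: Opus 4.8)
The plan is to compute both $\mu(U)$ and $e_0(U)$ directly via the graded data furnished by \notaref{Not6.1}, since $A$ is the localization of the graded ring $R=R(C,D)$ and $U=K_R/\omega R$ for a general degree-minimal element $\omega$. First I would record the basic numerics for $D=K_C+P$. Since $\deg D=2g-1>0$ and $\deg D=\deg K_C+1>\deg K_C$, the \emph{remark} following \notaref{Not6.1} gives $a(R)=0$, so by \notaref{Not6.1}(2) we have $K_R=\bigoplus_{n\ge 0}H^0(C,\cO_C(K_C+nD))$, with $[K_R]_0=H^0(C,\cO_C(K_C))$ of dimension $g$. The claim $p_g(A)=\sum_{n=0}^{a(R)}h^1(C,\cO_C(nD))=h^1(\cO_C)=g$ is then immediate, and $R$ is not Gorenstein because Gorensteinness (\proref{p:Gor}) would force $K_C+D'-a(R)D=K_C\sim 0$, impossible for $g\ge 2$. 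The multiplicity $e_0(U)=(K_X+C_X)Z$ from \thmref{t:HP} should reduce, after identifying $C_X=(a(R)+1)E=E$ and computing the cycle $Z$ representing $\m$, to $\deg D=2g-2$; alternatively $e_0(U)=e_0(K_A)=e_0(A)=\deg D$ via standard multiplicity theory for these cone singularities. I would present whichever identification is cleanest.

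The heart of the matter is computing $\mu(U)=\dim_k U/\m U$, and by \thmref{t:HP} this equals $e_0(U)+\eta_1-\eta_0$, so everything comes down to the two correction terms $\eta_0$ and $\eta_1$. Here $\eta_0=p_g(A)-h^1(\cO_X(K_X+C_X))$ by \lemref{l:codim}. The plan is to compute these $h^1$'s grade-by-grade using the $\bbA^1$-bundle structure of \notaref{Not6.1}(4)--(5): the exact sequences there let me read $h^1(\cO_X(K_X+rE))$ off from $\sum_{n\ge -r+1}h^1(C,\cO_C(K_C+nD))$, and $h^1(\cO_X(-nZ))$ from the degrees $\deg(K_C+nD)$ relative to $2g-2$. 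Equivalently — and this is where the hyperelliptic hypothesis must enter — $\mu(U)=\type(A)-1+(\text{number of new generators of }K_A\text{ not coming from }\omega A)$, so I expect $\mu(U)$ to be governed by $\dim_k K_R/(\omega R+\m_R K_R)$. The two cases then split according to whether the multiplication map $H^0(\cO_C(K_C))\otimes H^0(\cO_C(D))\to H^0(\cO_C(K_C+D))$, i.e. the degree-$1$ piece of $K_R$ mapping into degree $2$, is surjective.

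The decisive input is the geometry of the canonical map. By the classical theorem of Max Noether, for a non-hyperelliptic curve of genus $g\ge 2$ the multiplication maps $\mathrm{Sym}^n H^0(\cO_C(K_C))\to H^0(\cO_C(nK_C))$ are surjective, so $R$ is generated in the relevant low degrees and the canonical module is as ``tight'' as possible; this forces $\mu(U)=g$ exactly, matching case (1). For a hyperelliptic curve the canonical map factors through the degree-$2$ map to $\PP^1$ and the multiplication $H^0(\cO_C(K_C))\otimes H^0(\cO_C(D))\to H^0(\cO_C(K_C+D))$ fails to be surjective; the cokernel is exactly what inflates $\mu(U)$ up to $2g-2=e_0(U)$, giving the Ulrich equality and hence almost Gorensteinness in case (2). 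Concretely I would count dimensions: in the hyperelliptic case the extra sections of $\cO_C(K_C+P)$ not in the image of the canonical multiplication contribute precisely $g-2$ new generators, so $\mu(U)$ jumps from $g$ to $2g-2$.

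The main obstacle I anticipate is the clean bookkeeping of $\eta_1$, that is, controlling the failure of surjectivity of $H^0(\alpha_1)$ and the term $h^1(\cO_X(K_X+C_X-Z))$ simultaneously. Showing $\eta_0=\eta_1$ in the hyperelliptic case (equivalently the Ulrich equality) versus $\eta_1-\eta_0=2g-2-g=g-2>0$ in the non-hyperelliptic case is where the hyperelliptic/non-hyperelliptic dichotomy has to be made rigorous, and it rests entirely on Noether's theorem about the surjectivity of the canonical multiplication maps. I would isolate this surjectivity statement as the single geometric fact distinguishing the two cases, verify that it is precisely the vanishing or non-vanishing of the relevant $H^1$ on the bundle $X$, and then let \thmref{t:HP} convert it into the stated values of $\mu(U)$.
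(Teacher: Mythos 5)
Your computation of $\mu(U)$ is essentially the paper's own proof: since a general $\omega$ lies outside $\m K_A$, one has $\mu(U)=\type(A)-1$, and $\type(A)=\dim_k K_R/\m_R K_R=g+\dim_k\Coker\gamma_1$, where $\gamma_1\colon H^0(\cO_C(K_C))\otimes H^0(\cO_C(D))\to H^0(\cO_C(K_C+D))$; Max Noether's theorem gives $\dim_k\Coker\gamma_1=1$ in the non-hyperelliptic case and the double-cover description of the canonical system gives $g-1$ in the hyperelliptic case, exactly as in \lemref{l:generation}. To make this complete you still need two facts you leave implicit: (i) $K_R$ is generated in degrees $0$ and $1$, i.e.\ $\gamma_n$ is surjective for $n\ge 2$ (the paper deduces this from $h^1(\cO_C(nD-K_C))=0$, valid since $nD-K_C>K_C$ for $n\ge 2$), and (ii) $\Img\gamma_1\subseteq H^0(\cO_C(2K_C))$, so that in the non-hyperelliptic case the cokernel is exactly $1$ and not $0$. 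Both rest on the equality $H^0(\cO_C(D))=H^0(\cO_C(K_C))$, which you never explicitly establish.

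The genuine gap is in the claim $e_0(U)=2g-2$. Your fallback identification ``$e_0(U)=e_0(K_A)=e_0(A)=\deg D$'' is false --- there is no such identity, since $U$ is a one-dimensional module whose multiplicity is unrelated to $e_0(A)$ --- and moreover $\deg D=\deg(K_C+P)=2g-1$, not $2g-2$, so this route would contradict the very statement being proved. The correct route, $e_0(U)=(K_X+C_X)Z$ via \thmref{t:HP}, requires a resolution on which $\m$ is represented by a cycle $Z$ (i.e.\ $\m\cO_X=\cO_X(-Z)$ with $\cO_X(-Z)$ generated) and on which $\cO_X(K_X+C_X)$ is generated. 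This is exactly where the subtlety you skip lies: because $H^0(\cO_C(D))=H^0(\cO_C(K_C))$, the sheaf $\cO_C(D)$ has the base point $P$, so $\m$ is \emph{not} represented by a cycle on the minimal resolution $X_0$. One must blow up at $P$, obtaining $Z=E_0+2E_1$ and $K_X+C_X=\phi^*(K_{X_0}+E_0)$, whence by the projection formula and adjunction
\[
(K_X+C_X)Z=(K_{X_0}+E_0)E_0=\deg K_C=2g-2,
\]
with generation of $\cO_{X_0}(K_{X_0}+E_0)$ checked via Grauert--Riemenschneider. So the clean statement is $e_0(U)=\deg K_C$, not $\deg D$; the same base-point phenomenon that drives your hyperelliptic dichotomy is precisely what must be handled before \thmref{t:HP} applies.
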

\begin{proof}
Let $X_0 \to \spec (A)$ be the minimal resolution with exceptional set $E_0\cong C$.
Since $\deg D \ge 2g -1$, we have  $h^1(\cO_C(nD))=0$ for $n\ge 1$, and $p_g(A)=h^1(\cO_C)=g$. 
%{\color{red}
We see that $A$ is not Gorenstein by \cite[Corollary 3.6]{Konno-coh} or 
\proref{p:Gor}. %}
For $n\ge 1$,
%{\color{red}
since $\cO_{X_0}(- E_0) \otimes_{\cO_{X_0}} \cO_{E_0} \cong \cO_{E_0}(D)$,
%}
 we also have $H^0(\cO_{{E_0}}(K_{X_0}+(n+1){E_0}))\cong H^0(\cO_{C}(K_C-nD))=0$.
From the exact sequence
\[
0 \to \cO_{X_0}(K_{X_0}+n{E_0}) \to \cO_{X_0}(K_{X_0}+(n+1){E_0}) \to 
\cO_{E_0}(K_{X_0}+(n+1){E_0}) \to 0,
\]
we obtain $H^0(\cO_{{X_0}}(K_{{X_0}}+{E_0}))=H^0(\cO_{{X_0}}(K_{X_0}+(n+1){E_0}))$
for $n\ge 1$, 
and hence $C_{{X_0}}={E_0}$. 
From the Grauert-Riemenschneider vanishing theorem (cf. \cite[1.5]{gi}) and the exact sequence 
\[
0 \to \cO_{{X_0}}(K_{{X_0}}) \to \cO_{{X_0}}(K_{{X_0}}+{E_0}) 
\to \cO_{{E_0}}(K_{{E_0}}) \to 0,
\]
we have the surjection 
$H^0(\cO_{{X_0}}(K_{{X_0}}+{E_0}) ) \to H^0(\cO_{{E_0}}(K_{{E_0}}))$.
Thus $\cO_{{X_0}}(K_{{X_0}}+C_{{X_0}})$ is generated.
Since $h^1(\cO_C(K_C)) =1$, $h^1(\cO_C(D))=0$, 
from the exact sequence 
\[0\to \cO_C(K_C) \to \cO_C(D) \to \cO_{P} \to 0, \]
we have $H^0(\cO_C(K_C))=H^0(\cO_C(D))$.
Thus $\{P\}$ is the base locus of $\cO_C(D)$. 
From  
the exact sequence (see  Notation-Proposition 
%{\color{red}
\ref{Not6.1} (4))  
%}
\[
0\to H^0(\cO_{X_0}(-2E_0)) \to H^0(\cO_{X_0}(-E_0)) 
\to H^0(\cO_{E_0}(D)) \to 0,
\]
we have that  $\m\cO_{X_0}=I_P\cO_{X_0}( -E_0)$, where $I_P$ is the ideal sheaf of the point $P$; in other words, $E_0$ is the maximal ideal cycle on $X_0$ (cf. \cite[\S 4]{yau.max}) and $\cO_{X_0}(-E_0)$ has a single base point $P$ of multiplicity one.
Let $\phi\: X\to X_0$ be the blowing up at $P$ and $E_1$ the exceptional set of $\phi$.
We denote by $E_0$ again  the proper transform of $E_0$ on $X$.
Then the maximal ideal $\m\subset A$ is represented by $Z:=\phi^*E_0 + E_1 = E_0+2E_1$.
%{\color{red}
Since $\cO_{X_0}(K_{X_0}+E_0)$ is generated and $C_{X_0}=E_0$, we have  
%$p_g(A)=g$, the cohomological cycle on $X$ is $E_0$. Thus 
$\phi^*(K_{X_0}+E_0)=K_X+E_0 = K_X +C_X$ and $\cO_{X}(K_{X}+C_X)$ is generated.
%}
%, because  $\cO_{X_0}(K_{X_0}+E_0)$ is generated.  
By \thmref{t:HP}, 
since $K_XE_1=-1$, 
\[
e_0(U)=(K_X+E_0)(E_0+2E_1)=2g-2 + (K_X+E_0)(2E_1) = 2g-2.
\]

Now we need a lemma to compute $\mu(U)$.

\begin{lem}\label{l:generation} 
Let $D$ be a divisor on $C$ with $\deg (D)>0$.
For $n \ge 1$, 
%}
consider the multiplication map
\[
\gamma_n : H^0(\cO_C(K_C)) \otimes H^0( \cO_C(nD)) \to H^0(\cO_C(K_C+nD)).
\]
Then  we have the following.
\begin{enumerate}
\item 
If $h^1(\cO_C(nD - K_C)) = h^0(\cO_C(2 K_C - nD)) =0$, 
then $\gamma_n$ is surjective. 
In particular, if $\deg nD> 2 \deg (K_C)$, then $\gamma_n$ is surjective.

\item Let $D=K_C+P$, where $P$ is a point of $C$. 
Then $\gamma_n$ is surjective for $n\ge 2$. 
 \begin{enumerate}
\item If $C$ is not hyperelliptic, then  $\dim_k \Coker \gamma_1 = 1$.
\item If $C$ is hyperelliptic, then  $\dim_k \Coker \gamma_1 = g-1$.
\end{enumerate} 
\end{enumerate}
\end{lem}
\begin{proof}
Let $a,b\in H^0(\cO_C(K_C))$ be general elements that generate $\cO_C(K_C)$
 at each point of $C$. Then,  
we have the following exact sequence (cf. \lemref{l:2ndfund}):
\begin{equation}\label{eq:seqC}
0 \to \cO_C(-K_C+nD) \xrightarrow {(a,b)}
 \cO_C(nD)^{\oplus 2} \xrightarrow{\genfrac{(}{)}{0pt}{}{-b}{a}}
\cO_C(K_C+nD) \to 0.
\end{equation}
This implies (1)  since $H^1(\cO_C(-K_C+nD))=0$ by our assumption.
%for $n \ge 1$. 
\par
Assume that $D=K_C+P$. 
Then, if $n\ge 2$, we have $-K_C+nD > K_C$, and hence $H^1(\cO_C(-K_C+nD))=0$.
Since $\cO_C(K_C)$ is generated,  
$h^1(\cO_C(- K_C +D))= h^0(\cO_C(K_C-P)) = g-1$. 
Therefore, the exact sequence \eqref{eq:seqC} implies that $\gamma_n$ is surjective for $n\ge 2$ and 
  $\dim_k \Coker \gamma_1 \le h^1(\cO_C(-K_C+D)) = g-1$. 
 %{\color{red}
For the following argument, recall that %}
$H^0(\cO_C(K_C))=H^0(\cO_C(D))$.

If $C$ is not hyperelliptic, then by Max Noether's theorem (cf. \cite[(2.10)]{Saint-Donat}), the multiplication map $H^0(\cO_C(K_C)) \otimes H^0( \cO_C(K_C)) \to H^0(\cO_C(2K_C))$ is surjective, and since $H^0( \cO_C(D))=H^0( \cO_C(K_C))$ and 
$h^0(\cO_C(K_C+D)) = h^0(\cO_C(2K_C))+1$, 
\[
\dim_k \Coker \gamma_1 = \ell_A( H^0(\cO_C(2K_C+P)) / H^0(\cO_C(2K_C))) =1.
\]

Next assume that $C$ is hyperelliptic.
Then there exists a double covering $\pi\: C \to \PP^1$. If we put $F = \pi^{*}(Q)$ for some point 
$Q \in\PP^1$,  then $(g-1) F \sim K_C$ and we have 
\[
R(C, F) =\bigoplus_{n\ge 0} H^0(\cO_C(nF)) \cong k[X, Y, Z]/(Z^2 - f_{2g+2}(X,Y))
\]
for some homogeneous polynomial $f_{2g+2}$ of degree $2g+2$, if $p =\chara(k) \ne 2$
(if $p=2$, we need slightly different expression but the argument is essentially the same).
Since 
 $ H^0( \cO_C(D)) = H^0( \cO_C(K_C))  \cong R(C,F)_{g-1}\cong k[X,Y]_{g-1}$, 
we have  $\Img \gamma_1
\cong  k[X,Y]_{2(g-1)}$, 
and thus  
$\dim_k (\Img \gamma_1) = 2g -1$.
Since 
%{\color{red}
$h^0(\cO_C(2K_C+P))=\deg(2K_C+P) + 1 - g = 3g-2$, 
 we obtain that
\[
\dim_k \Coker \gamma_1 = h^0(\cO_C(2K_C+P)) - \dim_k \Img \gamma_1 =  g-1.
\qedhere
\]
\end{proof}

Now we apply \lemref{l:generation} (2)
to complete the proof of Theorem \ref{p:K+p}.  
By \eqref{eq:type} and \lemref{l:generation},  we have 
\begin{align*}
\mu(U)  =\type(A)-1 & = 
h^0(\cO_C(K_C))+ \dim_k \Coker \gamma_1 -1 \\
&= g - 1 +\dim_k \Coker \gamma_1 \\
& =\begin{cases}
g &  \text{ if $C$ is not hyperelliptic, }\\
2g-2 & \text{ if  $C$ is  hyperelliptic.}
\end{cases}
\qedhere
\end{align*}
%}
\end{proof}

\subsection{The Point Divisors and Weierstrass Semigroups}
\phantom{AAAAAAA}\par 

Next we treat the Case when $D = P \in C$ is a point. 
%{\color{red}
The ring $R(C,P)$ 
%}
 has very strong connection with the Weierstrass semigroup 
%{\color{red}
$H_{C,P}$. We
%}
 will show that $R(C,P)$ is almost Gorenstein 
if and only if $H_{C,P}$ is almost symmetric, or, the semigroup ring $k[H_{C,P}]$ is almost 
Gorenstein. 
First let us review the \lq\lq Weierstrass semigroup"
\[
H_{C,P} = \defset{n\in \Z_{\ge 0}}{ h^0(\cO_C(nP)) > h^0(\cO_C((n-1)P)) }.
\]
The finite set $\Z_{\ge 0}\setminus H_{C,P}$ is called the \lq\lq Weierstrass gap sequence"
 and the cardinality $\sharp(\Z_{\ge 0}\setminus H_{C,P})$  
coincides with $g=g(C)$. 

We have the following relation between $R(C,P)$ and $H_{C,P}$. 

\begin{prop}[{\cite[Proposition (5.2.4)]{GW}}]\label{R/TR}
If $R=R(C,P)= \bigoplus_{n\ge 0}H^0(\cO_C(nP))T^n$ then for $T\in R_1$, 
 $R/TR\cong k[H_{C,P}]$.  
\end{prop}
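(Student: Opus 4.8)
The plan is to establish the ring isomorphism $R/TR \cong k[H_{C,P}]$ by computing both sides degree-by-degree and matching their graded pieces. First I would recall that $R = \bigoplus_{n \ge 0} H^0(\cO_C(nP))T^n$ and that $T \in R_1$ is the generator corresponding to the constant section of $\cO_C(P)$; multiplication by $T$ sends $H^0(\cO_C(nP))$ into $H^0(\cO_C((n+1)P))$ via the natural inclusion of sheaves $\cO_C(nP) \hookrightarrow \cO_C((n+1)P)$. The key observation is that this inclusion is an isomorphism on global sections precisely when $h^0(\cO_C((n+1)P)) = h^0(\cO_C(nP))$, i.e.\ when $n+1 \notin H_{C,P}$, and strictly increases the dimension by exactly one otherwise. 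Thus I would analyze the cokernel $R/TR$ in each degree $n$: the $n$-th graded piece is $H^0(\cO_C(nP)) \big/ T \cdot H^0(\cO_C((n-1)P))$, whose dimension is $h^0(\cO_C(nP)) - h^0(\cO_C((n-1)P))$, which is $1$ if $n \in H_{C,P}$ and $0$ otherwise.

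Next I would verify that $R/TR$ is reduced and identify its ring structure. Since each nonzero graded piece $[R/TR]_n$ is one-dimensional over $k$ exactly for $n \in H_{C,P}$, the Poincar\'e series of $R/TR$ equals $\sum_{n \in H_{C,P}} t^n$, which is exactly the Hilbert series of the semigroup ring $k[H_{C,P}]$. To promote this numerical coincidence to a ring isomorphism, I would choose, for each generator $n_i$ of the semigroup $H_{C,P}$, an element $x_{n_i} \in H^0(\cO_C(n_i P))$ having a pole of order exactly $n_i$ at $P$ (so its image is a nonzero generator of $[R/TR]_{n_i}$). The multiplicative structure then follows from the additivity of pole orders: the product of a section with pole order $a$ and one with pole order $b$ has pole order $a+b$, matching the monoid operation in $k[H_{C,P}]$. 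This gives a surjective graded $k$-algebra homomorphism $k[H_{C,P}] \to R/TR$, which must be an isomorphism by the equality of Hilbert functions established above.

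The main obstacle I anticipate is the passage from matching Hilbert series to producing a genuine ring isomorphism, rather than merely a graded vector-space isomorphism. The subtlety is that one must check that products of chosen generators do not unexpectedly vanish or fail to represent the intended semigroup element modulo $TR$; concretely, I must confirm that if $a, b \in H_{C,P}$ then a product of sections with exact pole orders $a$ and $b$ has image in $[R/TR]_{a+b}$ that is nonzero, so the semigroup relation lifts correctly. This amounts to verifying that the leading coefficient of the Laurent expansion at $P$ is multiplicative, which is a standard local computation once one fixes a uniformizer at $P$. I would therefore organize the argument by passing to the completion of the local ring $\cO_{C,P}$, where sections are identified with their principal parts, reducing the entire claim to the elementary statement that the valuation at $P$ induces the monoid structure on $H_{C,P}$. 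Since $T$ corresponds to the section killing exactly the pole-order filtration step, this identification is clean and the isomorphism follows.
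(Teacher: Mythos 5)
Your argument cannot be compared against a proof in the paper, because the paper gives none: Proposition~\ref{R/TR} is quoted verbatim from Goto--Watanabe \cite{GW}, Proposition (5.2.4), with no argument supplied. So your proposal stands as a self-contained substitute, and it is essentially correct. The degree-by-degree computation is right: since $g\ge 2$ forces $h^0(\cO_C(P))=1$, the element $T$ is the constant section, multiplication by $T$ is the natural inclusion $H^0(\cO_C((n-1)P))\subset H^0(\cO_C(nP))$, and $[R/TR]_n$ has dimension $1$ or $0$ according to whether $n\in H_{C,P}$ or not, so the Hilbert series match. The one point you must implement carefully is the passage from Hilbert series to ring isomorphism, and your own final paragraph contains the correct fix, though your middle paragraph understates the problem: a map $k[H_{C,P}]\to R/TR$ defined by sending each semigroup generator $t^{n_i}$ to an arbitrarily chosen section $x_{n_i}$ of exact pole order $n_i$ need not be well defined, because two monomials in the $x_{n_i}$ of the same total degree are both nonzero elements of a one-dimensional space but may differ by a nonzero scalar, so the binomial relations presenting $k[H_{C,P}]$ are not automatically respected; nonvanishing of products (additivity of pole orders) alone does not settle this. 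The clean implementation is the one you sketch at the end, best run in the opposite direction: fix a uniformizer $t$ at $P$ and define $R/TR\to k[H_{C,P}]$ by sending the class of $fT^n$ to $c_{-n}(f)\,t^n$, where $c_{-n}(f)$ is the coefficient of $t^{-n}$ in the Laurent expansion of $f$ at $P$. Multiplicativity of leading coefficients makes this a graded $k$-algebra homomorphism; it kills $TR$ because $f\in H^0(\cO_C((n-1)P))$ has $c_{-n}(f)=0$; and it is surjective in each degree $n\in H_{C,P}$ since some section has pole order exactly $n$, hence it is an isomorphism by your dimension count. Equivalently, normalize every $x_n$ ($n\in H_{C,P}$) to have leading coefficient $1$, after which $x_ax_b=x_{a+b}$ exactly and the generator-to-generator map is well defined.
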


In general,  we say that a subset $H\subset \Z_{\ge 0}$ is a {\it numerical semigroup} if  
$H$ is closed under addition, $0\in H$, and $\Z_{\ge 0}\setminus H$ is a finite set. 
Assume that $H$ is a numerical semigroup.
We define  
$g(H) = \sharp (\Z_{\ge 0}\setminus H)$,
$\Fr(H) =  \max\{ n \in \Z\;|\; n \not\in H\}$, and 
$\type(H)=\sharp \defset{n\in \Z\setminus H}{n+H\cap \Z_{>0} \subset H}$.   
Then we have always $2 g(H) \ge \Fr(H) + \type(H)$ and we say that $H$ is an
 {\it almost symmetric 
semigroup} if $2 g(H) = \Fr(H) + \type(H)$ holds. 

\begin{prop}[\cite{BF}]
\label{p:BF}
$k[H] = k[ t^h\;|\; h\in H]$ is almost Gorenstein if and only if $H$ is almost symmetric. 
\end{prop}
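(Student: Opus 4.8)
The plan is to make the statement a short combinatorial equivalence once the canonical module of $k[H]$ is written down explicitly. Throughout I would set $F=\Fr(H)$, $g=g(H)$, $t=\type(H)$, $H_{>0}=H\cap\Z_{>0}$, and $\mathrm{PF}(H)=\defset{n\in\Z\setminus H}{n+H_{>0}\subseteq H}$, so that $t=\sharp\,\mathrm{PF}(H)$ (the pseudo-Frobenius numbers). First I would recall the standard description of the graded canonical module of $k[H]\subseteq k[t]$: it is represented by the fractional ideal $\omega=\bigoplus_{z\in\Omega}k\,t^{z}$ with $\Omega=\defset{z\in\Z}{F-z\notin H}$. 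The crucial preliminary observation is that $k[H]\subseteq\omega$, i.e. $H\subseteq\Omega$: if $h\in H$ had $F-h\in H$, then $F=h+(F-h)\in H$, contradicting $F\notin H$; hence $F-h\notin H$ for every $h\in H$, so $h\in\Omega$.

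Next I would specialize the definition of \ssref{ss:AGL} to $\dim A=1$. A module $U$ with the Ulrich property then has $\dim U=0$, so $e_0(U)=\ell_A(U)$ and $\mu(U)=\ell_A(U/\m U)$; thus $U$ is Ulrich if and only if $\m U=0$. Taking the inclusion $k[H]\hookrightarrow\omega$ as the map $A\to K_A$, the cokernel is $U=\omega/k[H]$, and the condition $\m_R U=0$ becomes $\m_R\,\omega\subseteq k[H]$, which in semigroup language reads $H_{>0}+\Omega\subseteq H$. This is the original Barucci--Fr\"oberg reformulation, so $k[H]$ is almost Gorenstein if and only if $H_{>0}+\Omega\subseteq H$.

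I would then compute $\Omega\setminus H$. Since every integer $>F$ lies in $H$ and $\min\Omega=0$, one has $\Omega\cap[0,F]=\defset{F-\ell}{\ell\text{ a gap of }H}$, whence $\Omega\setminus H=S$, where $S:=\defset{m}{m\text{ and }F-m\text{ are both gaps of }H}$. Because $H_{>0}+H\subseteq H$ holds automatically, the condition $H_{>0}+\Omega\subseteq H$ is equivalent to $m+H_{>0}\subseteq H$ for every $m\in S$, that is, to $S\subseteq\mathrm{PF}(H)$. Thus the whole statement reduces to the combinatorial equivalence $S\subseteq\mathrm{PF}(H)\iff 2g=F+t$.

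For this I would establish two elementary facts. (i) Applying the involution $x\mapsto F-x$ to $\{0,1,\dots,F\}$: no pair $\{x,F-x\}$ lies entirely in $H$ (else $F\in H$), so each $x$ belongs either to a both-gaps pair or to a mixed pair containing exactly one gap; counting elements gives $F+1=\sharp S+2(g-\sharp S)=2g-\sharp S$, hence $\sharp S=2g-F-1$. (ii) $\mathrm{PF}(H)\setminus\{F\}\subseteq S$ always: if $n\in\mathrm{PF}(H)$ with $n\ne F$, then $F-n>0$, and $F-n\in H$ would force $F=n+(F-n)\in H$, a contradiction, so $F-n$ is a gap and $n\in S$. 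Since $F\in\mathrm{PF}(H)$ and $F\notin S$, facts (i) and (ii) give $\mathrm{PF}(H)\subseteq S\sqcup\{F\}$ and $t\le\sharp S+1=2g-F$, reproving $2g\ge F+t$; moreover equality $2g=F+t$ holds if and only if $\mathrm{PF}(H)=S\sqcup\{F\}$, which (because $F\in\mathrm{PF}(H)$) is equivalent to $S\subseteq\mathrm{PF}(H)$. Combining this with the reduction above yields that $k[H]$ is almost Gorenstein $\iff S\subseteq\mathrm{PF}(H)\iff 2g=F+t$, i.e. $H$ is almost symmetric. I expect the main obstacle to be the ring-theoretic bookkeeping of the first two paragraphs---pinning down the canonical fractional ideal $\omega$ and verifying that the dimension-one Ulrich condition is exactly $\m_R\omega\subseteq k[H]$; once that translation is secured, the combinatorial core (i)--(ii) is very short.
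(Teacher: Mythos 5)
There is no proof in the paper to compare against: Proposition~\ref{p:BF} is quoted from \cite{BF}. Judged on its own terms, your reconstruction has the right skeleton, and its combinatorial core is sound: the description of the graded canonical module as $\omega=\bigoplus_{z\in\Omega}k\,t^{z}$ with $\Omega=\defset{z\in\Z}{F-z\notin H}$ (where $F=\Fr(H)$), the identity $\Omega\setminus H=S$, and your facts (i)--(ii) proving $S\subseteq\mathrm{PF}(H)\Longleftrightarrow 2g(H)=F+\type(H)$ are all correct. The genuine gap sits in your second paragraph, and it breaks exactly one implication, namely ``almost Gorenstein $\Rightarrow$ almost symmetric''. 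The definition used in this paper (Definition~2.1, following \cite{GTT}, together with the Convention in \sref{s:example}) is \emph{existential}: the localization $A=k[H]_{\m}$ must admit \emph{some} injection $A\to K_A$ with Ulrich cokernel, i.e.\ some $x\in K_A\setminus\{0\}$ with $\m K_A\subseteq Ax$. You verify the Ulrich condition only for the canonical inclusion $1\mapsto t^{0}$; that yields ``almost symmetric $\Rightarrow$ almost Gorenstein'', but for the converse you must rule out that some other $x$ works while $t^{0}$ does not. The sentence ``this is the original Barucci--Fr\"oberg reformulation'' does not supply this: Barucci and Fr\"oberg \emph{define} almost Gorenstein by $\m K\subseteq A$ for a normalized canonical ideal $A\subseteq K\subseteq\overline{A}$, and the agreement of that definition with the existential one of \cite{GTT} in dimension one is a theorem (proved in \cite[\S 3]{GTT}), which is precisely the step your write-up leaves open. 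Note also that $x$ lives in the localization, so it need not be homogeneous, and graded or monomial bookkeeping alone cannot dismiss it.

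The gap is fixable inside your framework, so the approach itself does not fail; one clean route is the $t$-adic valuation $v$. With $K:=\omega\,k[H]_{\m}$ one has $v(K\setminus\{0\})=\Omega$ and $v(A\setminus\{0\})=H$. Suppose $\m K\subseteq Ax$ and put $z=v(x)\in\Omega\subseteq\Z_{\ge 0}$. For $h\in H_{>0}$ and $\zeta\in\Omega$ the monomial $t^{h+\zeta}$ lies in $\m K\subseteq Ax$, hence $h+\zeta-z=v\bigl(t^{h+\zeta}x^{-1}\bigr)\in H$; thus $H_{>0}+\Omega\subseteq H+z$. If $z=0$, this is exactly your condition $H_{>0}+\Omega\subseteq H$ and your combinatorics finish the proof. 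If $z>0$, apply the inclusion with $h=e:=\min H_{>0}$ and $\zeta=0$: then $e-z\in H$ forces $z=e$ by minimality of $e$, and then $e+\Omega\subseteq H+e$ gives $\Omega\subseteq H$, i.e.\ $\Omega=H$, so $H$ is symmetric and a fortiori almost symmetric. Either way almost symmetry follows, closing the converse; alternatively, you may simply cite the dimension-one comparison theorem of \cite{GTT} at that point instead of attributing it to \cite{BF}.
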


\begin{prop}\label{p:mpg}
Assume that  $D = P \in C$ is a point.
Then $\m$ is a $p_g$-ideal if and only if $H_{C,P}=\{0, g+1, g+2, \dots\}$.
%Hence $A$ is almost Gorenstein if $H_{C,P}=\{0, g+1, g+2, \dots\}$. 
Note that this condition is satisfied for general point $P\in C$.
\end{prop}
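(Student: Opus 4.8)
The plan is to apply the criterion \dpref{dp:Rees}~(1): if $\m=I_W$ is represented by a cycle $W$ on a resolution $Y$, then $\m$ is a $p_g$-ideal exactly when $h^1(\cO_Y(-W))=p_g(A)$, so the whole problem becomes the computation of this one cohomology length. I would run the computation starting from the minimal resolution $X$ of \notaref{Not6.1}, where $E\cong C$ and $\cO_E(-nE)\cong\cO_C(nP)$. Since $H^0(\cO_X(-E))=\bigoplus_{n\ge 1}H^0(\cO_C(nP))=\m$ while the restriction $H^0(\cO_X(-E))\to H^0(\cO_E(-E))=H^0(\cO_C(P))$ has image $k\cdot s_P$ with $s_P$ vanishing at $P$, the sheaf $\cO_X(-E)$ has a single base point, the point $P\in E$, and $\m\cO_X=\cO_X(-E)\cdot\cI$ for an ideal $\cI$ supported at $P$. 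The key is to read off the colength of $\cI$ from the Weierstrass semigroup $H_{C,P}$.

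For this local analysis I would use the presentation of \notaref{Not6.1}~(4): over a neighborhood of $P$ with local coordinate $t$ on $C$ one has $\cO_C(nP)=t^{-n}\cO_{C,P}$, so $X$ is locally $\Spec k[[t]][v]$ with $v=t^{-1}T$, with $E=\{v=0\}$, the fibre over $P$ equal to $\{t=0\}$, and the degree-one element $T$ becoming $tv$. A section $fT^n$ with $f\in H^0(\cO_C(nP))$ of pole order $m\le n$ at $P$ becomes, up to a unit, $t^{n-m}v^n$, and pole order $m$ is attained precisely when $m\in H_{C,P}$. Running over all $n\ge 1$, I would check that the monomial ideal generated by these is $(tv,v^{h})$, where $h=\min(H_{C,P}\cap\Z_{>0})$ is the multiplicity of the semigroup: no pure power $v$ occurs because $1\notin H_{C,P}$, while $v^{h}$ is the first pure power and absorbs all higher ones. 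Hence $\cI=(t,v^{h-1})$ is complete of colength $c:=h-1$, and $\m$ is represented by the cycle $W$ on the log-resolution $\phi\colon Y\to X$ of $\cI$.

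The cohomology is then assembled as follows. Writing $\cO_Y(-W)=\phi^{*}\cO_X(-E)\otimes\cO_Y(-W')$ with $\cI\cO_Y=\cO_Y(-W')$, the projection formula together with the vanishing $R^1\phi_*\cO_Y(-W')=0$ (valid since $\cI$ is a complete ideal on the smooth surface $X$ resolved by point blow-ups) gives $h^1(\cO_Y(-W))=h^1(X,\m\cO_X)$. From the sequence $0\to\m\cO_X\to\cO_X(-E)\to\cO_X(-E)/\m\cO_X\to 0$, whose cokernel is a length-$c$ skyscraper at $P$ and whose source has $H^0(\m\cO_X)=\m=H^0(\cO_X(-E))$ (so the connecting map out of $H^0(\cO_X(-E))$ vanishes), I obtain $h^1(X,\m\cO_X)=c+h^1(\cO_X(-E))$. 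The $\bbA^1$-bundle structure yields $h^1(\cO_X(-E))=\sum_{n\ge 1}h^1(\cO_C(nP))=p_g(A)-h^1(\cO_C)=p_g(A)-g$. Combining, $h^1(\cO_Y(-W))=p_g(A)+(h-g-1)$. Since a numerical semigroup with $g$ gaps has multiplicity $h\le g+1$, the bracket is $\le 0$, with equality iff $h=g+1$, i.e. iff $1,\dots,g$ are all gaps, i.e. $H_{C,P}=\{0,g+1,g+2,\dots\}$. This is exactly the condition $h^1(\cO_Y(-W))=p_g(A)$, proving the equivalence. The final remark is immediate: a general point of $C$ is not a Weierstrass point, so $H_{C,P}=\{0,g+1,\dots\}$ there.

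The main obstacle I expect is the local step in the second paragraph, namely proving that $\m\cO_X=\cO_X(-E)\cdot(t,v^{h-1})$ — that is, translating the achievable pole orders (the semigroup $H_{C,P}$) into the monomial generators of the base-point ideal and verifying that the single gap condition $1\notin H_{C,P}$ already forces the leading generator $tv$ while the multiplicity $h$ controls the colength. Everything downstream (the vanishing $R^1\phi_*\cO_Y(-W')=0$, the skyscraper sequence, and the bundle formula for $h^1(\cO_X(-E))$) is then routine, and the resulting identity $h^1(\cO_Y(-W))=p_g(A)+(h-g-1)$ makes the criterion transparent.
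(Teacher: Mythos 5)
Your proof is correct, but it takes a genuinely different route from the paper's. The paper settles the ``if'' direction by citing \cite[Example 5.5]{PWY}, and proves ``only if'' purely algebraically: if $\m$ is a $p_g$-ideal, then $A$, and hence $k[H_{C,P}]\cong R/TR$, has minimal multiplicity; writing $e=\min(H_{C,P}\cap\Z_{>0})$ and supposing $e<g+1$, the semigroup must have a minimal generator $n>2e$, which yields an element of $R_n\setminus\m_R^2$ integral over $\m^2$, contradicting the normality of a $p_g$-ideal (\dpref{dp:Rees}~(4)). You instead prove both directions simultaneously through the single numerical identity $h^1(\cO_Y(-W))=p_g(A)+(h-g-1)$, where $h=\min(H_{C,P}\cap\Z_{>0})$, derived from the $\bbA^1$-bundle structure of the minimal resolution, the explicit local identification $\m\cO_X=\cO_X(-E)\cdot(t,v^{h-1})$ at the point $P\in E$ (this is the real content: for $1\le n<h$ the best sections give $t^nv^n\in(tv)$, and for $n\ge h$ they fall into $(v^h)$, so the ideal is $(tv,v^h)$ -- your computation is right), together with \dpref{dp:Rees}~(1). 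The cohomological bookkeeping (Leray for the log-resolution, the length-$(h-1)$ skyscraper, and $h^1(\cO_X(-E))=p_g(A)-g$) is sound, but two steps deserve explicit justification: the vanishing $R^1\phi_*(\cI\cO_Y)=0$ is Lipman's theorem on complete ideals in two-dimensional regular local rings (alternatively, it can be verified by hand for the monomial ideal $(t,v^{h-1})$), and the equality $H^0(\cO_Y(-W))=\m$, needed so that $W$ represents $\m$ in the paper's sense, which follows from $W\ge \phi^*E$ and $H^0(\cO_X(-E))=\m$. What your approach buys is a quantitative refinement absent from the paper: the defect $p_g(A)-h^1(\cO_Y(-W))=g+1-h$ measures exactly how far $\m$ is from being a $p_g$-ideal, and it makes visible the base-point structure of $\m\cO_X$ (which parallels the blow-up analysis in the proof of Theorem~\ref{p:K+p}); what it costs is the appeal to Lipman's vanishing, whereas the paper's argument stays elementary modulo the reference to \cite{PWY}.
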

%}

\begin{proof}
The ``if part'' 
follows from \cite[Example 5.5]{PWY}.
Assume that $\m$ is a $p_g$-ideal. Then $A$ has minimal multiplicity and so is $k[H_{C,P}]$.
Let $e=\min (H_{C,P} \cap \Z_{>0})$ and suppose that $e < g+1$.
Then there exists $n\in H_{C,P}$ which is a member of minimal generators of $H_{C,P}$ and $n > 2e$, since $H_{C,P}$ should be generated by $e$ elements.
Then we can take $f\in R_n \setminus \m_R^2$, corresponding to $n\in H_{C,P}$,  such that its image in $A$ is integral over $\m^2$; it contradicts that $\m$ is a $p_g$-ideal (see \dpref{dp:Rees}).
\end{proof}

%{\color{red} 
 From \proref{p:BF}, \proref{p:mpg}, and \thmref{t:pg}, we have the following.

\begin{cor}
Assume that  $D = P \in C$ is a point.
Then  $A$ is almost Gorenstein if $H_{C,P}=\{0, g+1, g+2, \dots\}$. 
\end{cor}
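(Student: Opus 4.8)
The plan is to route the statement through the $p_g$-ideal criterion rather than through a direct multiplicity computation. First I would apply \proref{p:mpg}, which asserts that for $D=P$ the maximal ideal $\m$ is a $p_g$-ideal if and only if $H_{C,P}=\{0,g+1,g+2,\dots\}$. Since this equality is exactly our hypothesis, $\m$ is a $p_g$-ideal. Then \thmref{t:pg} applies verbatim: whenever $\m$ is a $p_g$-ideal, the ring $A$ is almost Gorenstein. This already proves the corollary, and the only genuine content sits inside \proref{p:mpg} (the semigroup reformulation of minimal multiplicity) and \thmref{t:pg} (the identity $\eta_0=\eta_1$ coming from \lemref{l:2ndfund}), so I expect no independent obstacle.

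To see how \proref{p:BF} fits, and to cross-check the answer on the combinatorial side, I would verify directly that $H:=\{0,g+1,g+2,\dots\}$ is almost symmetric. Its gap set is $\{1,\dots,g\}$, so $g(H)=g$ and $\Fr(H)=g$. If $n\in\Z\setminus H$ satisfies $n+(H\cap\Z_{>0})\subset H$, then in particular $n+(g+1)\in H$, which forces $n\ge 0$ (the alternative $n+(g+1)=0$ is excluded since then $n+(g+2)=1\notin H$); hence $n\in\{1,\dots,g\}$. Conversely every such gap qualifies, because for $h\ge g+1$ one has $n+h\ge g+2$, which lies in $H$. Thus $\type(H)=g$, and $2g(H)=2g=g+g=\Fr(H)+\type(H)$, so $H$ is almost symmetric. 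By \proref{p:BF} together with \proref{R/TR}, the semigroup ring $k[H_{C,P}]\cong R/TR$ is almost Gorenstein; this is the graded/semigroup shadow of the geometric conclusion and confirms consistency.

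An alternative, more self-contained route would bypass \proref{p:mpg} and instead compute $\mu(U)$ and $e_0(U)$ from \thmref{t:HP} for a cycle $Z$ representing $\m$. That path, however, again reduces to verifying $\eta_0=\eta_1$, which is precisely what the $p_g$-ideal hypothesis supplies. The shortest correct plan is therefore the two-step implication above, and the only point requiring attention is recognizing that the prescribed Weierstrass semigroup is exactly the minimal-multiplicity case encoded by \proref{p:mpg}.
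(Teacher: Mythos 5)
Your proof is correct and follows exactly the paper's own route: the corollary is stated there as an immediate consequence of \proref{p:mpg} (your hypothesis is precisely the condition making $\m$ a $p_g$-ideal) combined with \thmref{t:pg}. Your additional verification that $H=\{0,g+1,g+2,\dots\}$ is almost symmetric is a correct consistency check against \proref{p:BF}, but it is not needed for the argument.
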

%}

%{\color{red} 
\begin{thm}
 [{\cite[Theorem 3.7]{GTT}}]
\label{t:GTT3.7}
Let $f\in \m$ be an $A$-regular element.
If $A/fA$ is almost Gorenstein, then $A$ is almost Gorenstein. 
The converse holds if $f$ is superficial for the $A$-module $U$ $($see \sref{s:Ugg}$)$ with respect to $\m$.
\end{thm}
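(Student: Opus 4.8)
The plan is to derive both implications from a single commutative diagram: reduce the defining sequence of almost Gorensteinness modulo $f$ and apply the snake lemma, using the standard isomorphism $K_{A/fA}\cong K_A/fK_A$ valid for a regular element on a Cohen-Macaulay local ring with canonical module. Since $A$ is a normal domain, hence generically Gorenstein, $K_A$ is torsion-free of rank one; so multiplication by any nonzero $\omega\in K_A$ is injective, and $f$ is a nonzerodivisor on the maximal Cohen-Macaulay module $K_A$. For $U=K_A/\omega A$ I would multiply $0\to A\xrightarrow{\times\omega}K_A\to U\to 0$ by $f$ and read off from the snake lemma the four-term sequence
\[0\to (0:_U f)\to A/fA\xrightarrow{\ \bar\omega\ }K_A/fK_A\to U/fU\to 0,\]
where $(0:_A f)=(0:_{K_A}f)=0$ and $K_A/fK_A=K_{A/fA}$. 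Everything else will be extracted from this sequence.

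For the forward implication I would start from a witness $0\to A/fA\xrightarrow{\bar\omega_0}K_{A/fA}\to C\to 0$ with $C$ Ulrich, lift $\bar\omega_0$ to $\omega\in K_A$, and put $U=K_A/\omega A$. Because $\bar\omega=\bar\omega_0$ is injective, the four-term sequence forces $(0:_U f)=0$ (so $f$ is $U$-regular) and identifies $U/fU\cong C$; the depth lemma applied to $0\to A\to K_A\to U\to 0$, together with $\dim U\le d-1$ (as $U$ is torsion), shows $U$ is Cohen-Macaulay of dimension $d-1$. I would then close the argument by a squeeze: using $\mu(U)\le e_0(U)$ for a Cohen-Macaulay module (compare $U/QU$ with $U/\m U$ for a minimal reduction $Q$), the inequality $e_0(U)\le e_0(U/fU)$ for the $U$-regular $f$, and the identities $\mu(U)=\mu(U/fU)$ and $e_0(U/fU)=\mu(U/fU)$ (the latter from Ulrichness of $C$), to obtain
\[\mu(U)\le e_0(U)\le e_0(U/fU)=\mu(U/fU)=\mu(U),\]
whence $e_0(U)=\mu(U)$ and $A$ is almost Gorenstein.

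For the converse I would use that $U$ is Cohen-Macaulay of positive dimension $d-1$, so a superficial element $f$ for $U$ is automatically $U$-regular; the same four-term sequence then degenerates to $0\to A/fA\to K_{A/fA}\to U/fU\to 0$ with $U/fU$ Cohen-Macaulay over $A/fA$. Superficiality gives the equality $e_0(U/fU)=e_0(U)$, which combined with $\mu(U/fU)=\mu(U)$ and $e_0(U)=\mu(U)$ yields $e_0(U/fU)=\mu(U/fU)$; thus $U/fU$ is Ulrich and $A/fA$ is almost Gorenstein.

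The routine parts are the snake-lemma bookkeeping and the identification $K_A/fK_A\cong K_{A/fA}$. I expect the main obstacle to be the multiplicity comparison across $f$: in the forward direction I only need the inequality $e_0(U)\le e_0(U/fU)$ for an \emph{arbitrary} $U$-regular $f$, which is what lets the Ulrich property ascend from $U/fU$ to $U$, whereas the converse requires the sharper equality $e_0(U/fU)=e_0(U)$ --- precisely the defining feature of a superficial element, which explains why superficiality is imposed only there.
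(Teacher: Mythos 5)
The paper offers no proof of this statement at all: it is imported verbatim from \cite[Theorem 3.7]{GTT}, so there is no in-paper argument to compare yours against, and your proposal has to be judged on its own merits. It is correct, and it is essentially a reconstruction of the Goto--Takahashi--Taniguchi proof: reduce $0\to A\xrightarrow{\times\omega}K_A\to U\to 0$ modulo $f$ by the snake lemma, identify $K_A/fK_A$ with $K_{A/fA}$, and move the Ulrich property across $f$ by a multiplicity comparison; your closing observation that ascent needs only the inequality $e_0(U)\le e_0(U/fU)$ while descent needs the equality (which is exactly what superficiality buys) is the right way to see why the hypothesis is asymmetric. Three points are worth flagging, since they silently use the paper's standing hypotheses. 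First, injectivity of $\times\omega\colon A\to K_A$: you get it from torsion-freeness of $K_A$ over a domain, which is fine here since $A$ is normal; in the generality of \cite{GTT} (an arbitrary Cohen--Macaulay local ring) one argues instead that $a\omega=0$ and $\bar\omega$ injective force $\Ann_A(\omega)=f\Ann_A(\omega)$, and then applies Nakayama. Second, $\mu(U)\le e_0(U)$ via minimal reductions needs an infinite residue field, covered here because $k$ is algebraically closed. Third, the inequality $e_0(U)\le e_0(U/fU)$ for an \emph{arbitrary} $U$-regular $f\in\m$ is the one step you assert without proof; it does hold, and the one-line justification is to lift a minimal reduction of $\m/(f)$ with respect to $U/fU$ to elements $x_1,\dots,x_{d-2}\in\m$, so that Cohen--Macaulayness gives
\begin{equation*}
e_0(U/fU)=\ell_A\bigl(U/(f,x_1,\dots,x_{d-2})U\bigr)=e\bigl((f,x_1,\dots,x_{d-2});U\bigr)\ge e_0(U),
\end{equation*}
the last inequality by monotonicity of multiplicity under inclusion of parameter ideals. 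With these points made explicit, your argument is a complete and valid proof in the setting of this paper.
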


\begin{thm} 
\label{t:Hpg}
Assume that $R=R(C,P) = \bigoplus_{n\ge 0} H^0( C, \cO_C(nP)) T^n$.
Then the following conditions are  are equivalent$:$
%, then $T\in R(C,P)$ is a superficial element for $U$ and hence if $A$ is almost Gorenstein, so is $k[H_{C,P}]$.}
\begin{enumerate}
\item $A$ is almost Gorenstein. 
\item $k[H_{C,P}]$ is almost Gorenstein. 
\item $H_{C,P}$ is almost symmetric. 
\end{enumerate}
\end{thm}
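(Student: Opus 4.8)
The plan is to establish the equivalences $(2)\Leftrightarrow(3)$ and $(1)\Leftrightarrow(2)$ separately. The equivalence $(2)\Leftrightarrow(3)$ is immediate: it is precisely \proref{p:BF}, which asserts that $k[H]$ is almost Gorenstein if and only if $H$ is almost symmetric, applied to $H=H_{C,P}$. So the entire content lies in $(1)\Leftrightarrow(2)$, relating the almost Gorenstein property of $A$ (the localization of $R=R(C,P)$) to that of the semigroup ring $k[H_{C,P}]$.

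For $(1)\Leftrightarrow(2)$, the natural bridge is \proref{R/TR}, which gives $R/TR\cong k[H_{C,P}]$ for a suitable degree-one element $T\in R_1$. The idea is to apply \thmref{t:GTT3.7} with $f=T$: since $T$ is a homogeneous nonzerodivisor on the two-dimensional Cohen-Macaulay ring $R$, it is $A$-regular, and $A/TA$ is (the localization of) $k[H_{C,P}]$. First I would verify that $T$ is genuinely $R$-regular — this follows because $R$ is a domain (being a subring of $k(C)[T]$) and $T\ne 0$. Then one direction is automatic: \thmref{t:GTT3.7} states that if $A/TA=k[H_{C,P}]$ is almost Gorenstein, then $A$ is almost Gorenstein, giving $(2)\Rightarrow(1)$.

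The harder direction is $(1)\Rightarrow(2)$, and here the obstacle is the superficiality hypothesis in the converse part of \thmref{t:GTT3.7}: one must check that $T$ is superficial for the module $U=K_A/\omega A$ with respect to $\m$. The plan is to exploit the graded structure. Because everything in sight — $R$, $K_R$, and hence the module $U$ — is graded, and $T\in R_1$ is a homogeneous parameter, the filtration of $U$ by powers of $\m$ is controlled by the grading, and a general degree-one element behaves superficially. Concretely, I would argue that since $\cO_C(P)$ determines the grading and $T$ corresponds to a section vanishing to first order, multiplication by $T$ on the associated graded module $\bigoplus_n \ol{\m^n}U/\ol{\m^{n+1}}U$ is injective in all large degrees; this is the defining property of superficiality. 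The key computation supporting this is the multiplicity formula $e_0(U)=(K_X+C_X)Z$ from \thmref{t:HP}, together with the fact that $T$ cuts down dimension exactly, so that $e_0(U)$ is preserved under passage to $U/TU$. Once superficiality is confirmed, \thmref{t:GTT3.7} yields $(1)\Rightarrow(2)$.

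I expect the main obstacle to be the verification of superficiality of $T$ for $U$. The cleanest route is probably to observe that in the graded setting a homogeneous element of positive degree that is part of a system of parameters and avoids the relevant associated primes of the associated graded module is automatically superficial; since $U$ is a one-dimensional Cohen-Macaulay module and $T$ is a nonzerodivisor of minimal degree, the finiteness of $\Ass$ of the Rees-type associated graded module reduces this to a genericity statement that holds for the canonical choice $T\in R_1$. Alternatively, one can bypass superficiality entirely by giving a direct graded comparison of the Poincaré series of $K_A$ and $K_{A/TA}$ via \thmref{Stan-Thm}, showing that the Ulrich condition for $U$ over $A$ transfers to the Ulrich condition for $U/TU$ over $k[H_{C,P}]$; this graded-reciprocity approach may in fact be the most transparent way to close the loop.
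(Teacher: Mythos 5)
Your overall skeleton agrees with the paper's: $(2)\Leftrightarrow(3)$ is \proref{p:BF}, and $(2)\Rightarrow(1)$ is \thmref{t:GTT3.7} applied to $T\in R_1$ together with \proref{R/TR}, exactly as in the paper. The paper likewise reduces everything to $(1)\Rightarrow(2)$, and its mechanism there is in substance what you are attempting: for the one-dimensional Cohen--Macaulay module $U=K_R/wR$, superficiality of $T$ amounts to the equality $e_0(U)=\dim_k(U/TU)$, which the paper proves via \lemref{min:deg} and then transfers through $R/TR\cong k[H_{C,P}]$, $K_R/TK_R\cong K_{k[H_{C,P}]}$ and $\type(R)=\type(k[H_{C,P}])$ by hand, rather than quoting the converse half of \thmref{t:GTT3.7}. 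So your route is not genuinely different; the issue is whether your plan for verifying superficiality actually works.

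There are two genuine gaps. First, you assert that ``everything in sight --- $R$, $K_R$, and hence the module $U$ --- is graded,'' but this is not automatic: the almost Gorenstein hypothesis produces \emph{some} $\omega\in K_A$ with $K_A/\omega A$ Ulrich, and neither that $\omega$ nor the ``general'' $\omega$ of \sref{s:Ugg} is homogeneous. The crux of the paper's proof is precisely the step you skip: by \remref{w:choice} the only genericity required of $\omega$ is that it generate $\cO_X(K_X+C_X)$ at the generic point of the single exceptional curve $E\cong C$, which a homogeneous element $w\in K_R$ of minimal degree does; combined with \thmref{t:HP} (almost Gorensteinness of $A$ forces the Ulrich property for \emph{any} admissible $\omega$), one may replace $\omega$ by this homogeneous $w$, and only then is $U$ graded. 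Without this replacement your grading argument has nothing to act on. Second, your justification of superficiality by genericity would fail: here $R_1=H^0(\cO_C(P))\,T=k\,T$ is one-dimensional, so there is no ``general degree-one element,'' and $T$ is certainly not a general element of $\m/\m^2$, which is what superficiality of a specific element would ordinarily require. What actually makes $T$ superficial for the graded $U$ is that its degree is minimal: this is \lemref{min:deg}, which shows $(T)$ is a reduction of $\m$ on $U$, i.e.\ $e_0(U)=\dim_k(U/TU)$. You gesture at ``minimal degree'' but tie it to a genericity statement that is vacuous here; note also that superficiality concerns the $\m$-adic filtration $\m^nU$, not the integral-closure filtration $\ol{\m^n}U$ you write down. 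Once these two points are repaired (homogeneous choice of $\omega$ justified by \remref{w:choice} and \thmref{t:HP}, and superficiality obtained from \lemref{min:deg} rather than genericity), your argument through the converse of \thmref{t:GTT3.7} closes and coincides with the paper's proof; your alternative ``Poincar\'e series'' route is closer in spirit to what the paper actually does, but as stated it is an outline, not an argument.
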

%}
\begin{proof} 
%{\color{red} 
By  \proref{p:BF} and \thmref{t:GTT3.7}, it suffices to prove that (1) implies (2). 
%}
Assume that $X$ is the minimal resolution of $\Spec(R(C,P))$. 
We have seen in Notation-Definition \ref{Not6.1} that $X=\Spec_C( \bigoplus_{n\ge 0}\cO_X(nP))$
%}
 and $C_X = (a+1)E$, where $a = a(R)$. 
%{\color{red}
Let $H=H_{C,P}$.
Now, since $a(k[H]) = \Fr(H)$ (cf. \cite[2.1.9]{GW}) and 
$R/TR\cong k[H]$,
%}
 we have $a(R) = a(k[H]) -1$. 
Assume that $A$ is almost Gorenstein.
Since by Remark \ref{w:choice}, we can take $\omega\in K_A$ to be the image of a homogeneous element $w = f T^{-F(H)}$, where 
%{\color{red}
$f \in H^0(\cO_C(K_C - F(H) P))$. 
%}
Then $K_R/ wR$ is a graded $R$-module of dimension $1$ and 
%{\color{red}
the assertion follows %it suffices to show the following 
from Lemma \ref{min:deg}. 
%}
Actually $e_0( K_R/ wR) = \dim_k (K_R/ wR)/ T (K_R/ wR)$ by Lemma \ref{min:deg}
and  since $R/TR\cong k[H_{C,P}]$ we have  $\type(R)= \type (k[H])$.
\end{proof}

\begin{lem}\label{min:deg}  
Let $S= \bigoplus_{n\ge 0}S_n$ be a graded ring and let
$M$ be a finitely generated Cohen-Macaulay graded $S$-module of dimension $1$.
If $x\in S_b$ is an $M$-regular element and $S_i = 0$ for $0< i< b$, then 
we have $e_0(M) = \dim_k M/xM$.
\end{lem}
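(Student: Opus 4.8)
The plan is to compute the multiplicity $e_0(M)$ via a one-dimensional reduction argument, using the regular element $x$ to pass to the Artinian quotient $M/xM$. Since $M$ is Cohen-Macaulay of dimension $1$ and $x$ is $M$-regular, $x$ is a system of parameters for $M$, so multiplication by $x$ gives a short exact sequence of graded modules
\[
0 \to M(-b) \xrightarrow{\ \times x\ } M \to M/xM \to 0.
\]
The key point is that $(x)$ is then a \emph{minimal reduction} of the maximal ideal $\m_S$ modulo $\Ann(M)$: the hypothesis $S_i=0$ for $0<i<b$ forces $\m_S M = S_{\ge b} M$, and because $x\in S_b$ generates everything in the lowest nonzero degree above $M_0$ in the graded sense, one checks that $\m_S M = x M + (\text{higher})$, which under the grading collapses so that $x$ alone carries the multiplicity. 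Concretely, $M/xM$ is already annihilated by the "excess" part of $\m_S$, so $M$ is Ulrich-like with respect to the single parameter $x$.

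First I would recall the general fact that for a $1$-dimensional Cohen-Macaulay module $M$ with minimal reduction $(x)$ of the maximal ideal of $S/\Ann_S(M)$, one has $e_0(M)=\ell_S(M/xM)$; this is the standard statement that a parameter element computes the multiplicity for Cohen-Macaulay modules (and the length equals $\dim_k$ here since $S_0=k$). So the real content is to verify that $x\in S_b$ \emph{is} such a minimal reduction, i.e. that $xM$ and $\m_S M$ agree up to finite colength in the right way, and in fact that $\ell_S(M/xM)=\dim_k(M/xM)$. The condition $S_i=0$ for $0<i<b$ is exactly what guarantees this: the smallest degree in which $\m_S$ acts nontrivially is $b$, and $x$ spans a reduction there, so there is no contribution to the multiplicity from intermediate degrees.

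The main step I expect to need care with is justifying that $x$ is genuinely a minimal reduction and that the Hilbert-Samuel multiplicity equals the single length $\dim_k(M/xM)$ rather than a sum over a filtration. I would argue this by examining the graded Hilbert series: from the short exact sequence above, $P(M/xM,t)=(1-t^b)P(M,t)$, and since $M$ has dimension $1$ the series $P(M,t)$ has a simple pole at $t=1$ coming from a factor $(1-t^b)$ in the denominator (as $x$ is a parameter of degree $b$). Evaluating, $e_0(M)$ equals the numerator of $P(M,t)$ at $t=1$ after clearing this factor, which is precisely $P(M/xM,1)=\dim_k(M/xM)$. The hypothesis $S_i=0$ for $0<i<b$ ensures no cancellation or extra poles interfere, so the comparison is clean and the equality $e_0(M)=\dim_k(M/xM)$ follows directly.
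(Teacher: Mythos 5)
You have the right skeleton, and it is the same as the paper's: reduce to showing that $(x)$ is a minimal reduction of $\m_S$ modulo $\Ann_S(M)$, then invoke the standard fact $e_0(M)=e((x);M)=\ell(M/xM)=\dim_k(M/xM)$ for a one-dimensional Cohen--Macaulay module. The gap is in your verification of the reduction property, which is circular. Your Hilbert-series identity $\lim_{t\to 1}(1-t^b)P(M,t)=P(M/xM,1)=\dim_k(M/xM)$ is correct, but what it computes is $e((x);M)$, the multiplicity of the parameter ideal $(x)$. The quantity $e_0(M)=e(\m_S;M)$ is defined by the $\m_S$-adic filtration $\ell(M/\m_S^{n}M)$, and for a non-standard grading (the whole point of the lemma) the $\m_S$-adic and degree filtrations are genuinely different; the $\Z$-graded Hilbert series carries no information about the former. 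So the sentence ``$e_0(M)$ equals the numerator of $P(M,t)$ at $t=1$ after clearing this factor'' is exactly the assertion that $(x)$ is a reduction of $\m_S$ --- the thing to be proved. That this cannot be read off from pole structure is shown by $S=M=k[t^2,t^3]$ with $x=t^3$ (hypothesis violated, since $S_2\neq 0$): there $(1-t^3)P(S,t)=1+t^2+t^4$ evaluates to $3=\dim_k S/xS$, while $e_0(S)=2$; the pole at $t=1$ is simple in this case too, so your remark that the hypothesis ``ensures no cancellation or extra poles interfere'' is not where the hypothesis enters --- in fact your argument never uses it in any substantive way. (Your earlier heuristic is also false as stated: with $x=t^2$ in the same example, $t^3\cdot 1\neq 0$ in $S/xS$, so $M/xM$ is not annihilated by the part of $\m_S$ of degree $>b$.)

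What is needed, and what the paper does, is an argument about the $\m_S$-adic filtration itself. After replacing $S$ by $S/\Ann_S(M)$, so that $\dim S=1$, the ring $S/xS$ is Artinian, whence $\m_S^n\subseteq xS$ for some $n\ge 2$; the paper then compares degrees --- every homogeneous element of positive degree has degree at least $b$, while $x$ has degree exactly $b$ --- to conclude $\m_S^n=x\,\m_S^{n-1}$, i.e.\ $(x)$ is a minimal reduction of $\m_S$. This degree comparison is the one step where the hypothesis $S_i=0$ for $0<i<b$ does real work, and it is precisely the step missing from your proposal; without it (or some substitute, e.g.\ a valuative argument that homogeneous elements of degree $d\ge b$ lie in the integral closure of $(x)$), the conclusion does not follow.
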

\begin{proof} Replacing $S$ by $S/\Ann_S(M)$, we may assume that $\dim S =1$. 
Since $S/xS$ is an Artinian ring, $\m_S^n \subset xS$ for some $n\ge 2$ and comparing the 
degrees, we can assert $\m_S^n = x \m_S^{n-1}$, which shows that $(x)$ is a minimal reduction 
of $\m_S$. 
\end{proof}

%Now, we have shown that $(K_R/ wR)/ T (K_R/ wR)\cong K_{k[H]}/ z k[H]$, where $z \in (K_{k[H]})_{-F(H)}$.
%Since $\type(R)= \type (k[H])$, we have proved the following. 

%\begin{thm}\label{t:AGHCP}
%{\color{red}
%$A$  %$R(C,P)$ 
%is almost Gorenstein if and only if $k[H_{C,P}]$ is almost Gorenstein 
%(or $H_{C,P}$ is almost symmetric).
%}
%\end{thm}

\subsection{
%{\color{red}
The case $g=2$
%}
} 
\phantom{AAAAAAA}\par 

In this subsection, we determine almost Gorenstein property of 
all possible $R= R(C,D)$, when $C$ has genus $2$. 

\begin{thm} Let $R=R(C,D)$ with $g(C)=2$. Then we have the following;
\begin{enumerate}  
\item If $\deg(D) \le 2$, then $R$ is almost Gorenstein.
\item If $\deg(D) = 3$, $R(C,D)$ is almost Gorenstein if and only if $\cO_C(D)$ is 
{\em not} generated.
\item If $\deg(D) \ge 4$ and $D \not\sim 2K_C$, then $R$ is {\em not} 
almost Gorenstein. 
If $D\sim 2K_C$, then $R$ is almost Gorenstein.
\end{enumerate}
\end{thm}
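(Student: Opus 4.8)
The plan is to run every case through the single criterion of \thmref{t:HP}: $A$ is almost Gorenstein iff $\mu(U)=e_0(U)$, where $\mu(U)=\type(A)-1$ and $e_0(U)=(K_X+C_X)Z$. I will use repeatedly that $g=2$ forces $C$ to be hyperelliptic, with $K_C$ the $g^1_2$, so that $\deg K_C=2$, $h^0(\cO_C(K_C))=2$, $\cO_C(K_C)$ is generated, and $K_C-Q\sim\iota(Q)$ for the hyperelliptic involution $\iota$ and every point $Q$. The organizing observation is that $a(R)=0$ holds precisely when $\deg D\ge 3$, or when $\deg D=2$ and $D\not\sim K_C$; in all of these cases $C_X=E$, and since $R_1\ne 0$ the maximal-ideal cycle $Z$ contains $E$ with coefficient one, so $e_0(U)=(K_X+C_X)Z=2g-2=2$: the leading contribution is $(K_X+\tilde E)\tilde E=2g-2$ by adjunction, while the exceptional curves introduced to resolve the base points of $\cO_C(D)$ contribute nothing, exactly as in the proof of \thmref{p:K+p}. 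Hence for these $D$ the ring $A$ is almost Gorenstein if and only if $\type(A)=3$.

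The level obstruction disposes of large degrees. If $\deg D\ge 5$, then $\deg D>2\deg K_C$, so $R$ is level by \proref{ex:notAG}(a); it is not Gorenstein (by \proref{p:Gor}, since $a(R)=0$ gives $K_C\not\sim a(R)D$), hence not almost Gorenstein by \exref{level=nAG}. For $\deg D=4$ the bundle $\cO_C(D)$ is base-point-free (degree $2g$ on a genus-two curve), $a(R)=0$, and the generators of $K_R=\bigoplus_{n\ge 0}H^0(\cO_C(K_C+nD))$ beyond degree zero are controlled by \lemref{l:generation} and the sequence \eqref{eq:seqC}: $\gamma_n$ is surjective for $n\ge 2$, while $\dim_k\Coker\gamma_1=h^0(\cO_C(2K_C-D))$. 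As $\deg(2K_C-D)=0$, this is nonzero exactly when $D\sim 2K_C$, so $\type(A)=3$ (almost Gorenstein) if $D\sim 2K_C$, and $\type(A)=2$, i.e. $\mu(U)=1\ne 2=e_0(U)$ (not almost Gorenstein), otherwise. This settles (3).

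For $\deg D=3$ the same analysis gives $\type(A)=2+h^0(\cO_C(2K_C-D))$ with $\deg(2K_C-D)=1$; here hyperellipticity is decisive, for $K_C-Q\sim\iota(Q)$ shows that $2K_C-D$ is effective if and only if $D-K_C$ is, i.e. if and only if $\cO_C(D)$ fails to be generated. Thus $A$ is almost Gorenstein iff $\cO_C(D)$ is not generated, in agreement with \thmref{p:K+p} (the non-generated case being $D\sim K_C+P$). For $\deg D=2$ with $D\sim K_C$, $R$ is the canonical ring, Gorenstein by \proref{p:Gor} ($a(R)=1$), hence almost Gorenstein. For $\deg D=2$ with $D\not\sim K_C$ one has $\dim_k R_1=1$; choosing a homogeneous system of parameters $f\in R_1$, $g\in R_2$, the Artinian reduction $\bar R=R/(f,g)$ has $h$-vector $(1,0,1,2)$ and its maximal ideal $\bar\m$ satisfies $\bar\m^2=0$, so the socle is all of $\bar\m$ and $\type(A)=3$; with $e_0(U)=2$ this yields almost Gorenstein. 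Finally, for $\deg D=1$ we have $D=P$ a point, and \thmref{t:Hpg} reduces the question to the almost-symmetry of $H_{C,P}$; the only Weierstrass semigroups occurring on a genus-two curve are $\{0,2,4,5,\dots\}$ and $\{0,3,4,5,\dots\}$, both almost symmetric, so $A$ is almost Gorenstein.

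The principal obstacle is the non-standard-graded small-degree regime $\deg D\le 2$ with $\cO_C(D)$ not generated, where $R$ is not level and $\type(A)$ is not simply $\dim_k[K_R]_{-a(R)}$. There the type must be computed honestly, through the socle of an Artinian reduction or, equivalently, through Stanley's Theorem \thmref{Stan-Thm} applied with a genuinely non-linear system of parameters, and one must also check that the value $e_0(U)=2g-2$ persists after the base points of $\cO_C(D)$ are blown up (the correction terms $(K_X+\tilde E)F$ vanishing on the exceptional $(-1)$-curves). The degree-three step carries the more conceptual difficulty, namely the genus-two identity relating effectivity of $2K_C-D$ and of $D-K_C$, which is exactly where hyperellipticity enters.
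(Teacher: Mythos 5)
Your treatment of $\deg D=1$ contains a genuine gap: you assert that ``for $\deg D=1$ we have $D=P$ a point,'' but a degree-one divisor on a genus-two curve need not be linearly equivalent to an effective one. Generically $h^0(\cO_C(D))=0$ (the effective classes form only a curve inside the two-dimensional degree-one Picard variety), and $R(C,D)$ is still a two-dimensional normal graded ring, so this case is non-vacuous and is part of statement (1). The paper handles it separately: if $h^0(\cO_C(2D))=2$, then $2D\sim K_C$ and $R$ is Gorenstein; if $h^0(\cO_C(2D))=1$, the paper computes $P(R,t)=1+t^2+2t^3+\cdots$, applies Stanley's theorem (\thmref{Stan-Thm}) to get $P(K_R,t)$ and $P(K_R/wR,t)=(1+2t+t^2)/(1-t^2)$, and finds $\type(R)=5$ and $e_0(K_R/wR)=4$, whence $R$ is almost Gorenstein. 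Your proposal, which reduces $\deg D=1$ entirely to Weierstrass semigroups via \thmref{t:Hpg}, proves nothing in this case.

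Moreover, this missing case cannot be absorbed into your organizing principle. When $h^0(\cO_C(D))=0$ and $2D\not\sim K_C$ one has in fact $a(R)=0$ (since $h^1(\cO_C(D))=h^0(\cO_C(K_C-D))$ vanishes exactly when $D$ is non-effective), so your stated classification of when $a(R)=0$ is already inaccurate; and since $R_1=H^0(\cO_C(D))=0$, the maximal-ideal cycle contains $E$ with coefficient $2$, giving $e_0(U)=2(2g-2)=4$, not $2$. So the case behaves genuinely differently (type $5$, multiplicity $4$) and needs its own computation, e.g.\ the paper's Poincar\'e-series argument together with \lemref{min:deg}. Everything else in your proposal is correct and is essentially the paper's argument: the criterion $\mu(U)=\type(A)-1=e_0(U)$ from \thmref{t:HP}, levelness via \lemref{l:generation} and \exref{level=nAG} for $\deg D\ge 4$ with $D\not\sim 2K_C$, the hyperelliptic identity settling $\deg D=3$, and the type-three computations for $\deg D=2$ and for $D\sim 2K_C$ (your socle computation for $\deg D=2$, $D\not\sim K_C$ is a clean substitute for the paper's Stanley-series calculation, and your use of the semigroups $\{0,2,4,5,\dots\}$, $\{0,3,4,5,\dots\}$ for $D=P$ is correct). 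But as it stands, part (1) of the theorem is unproved.
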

\begin{proof}
%{\color{red}
We write $h^i(L) = h^i(\cO_C(L))$ for a divisor $L$ on $C$.
 By Riemann-Roch Theorem, we have 
\[ 
h^0(L ) = \deg (L) + h^1(L) - 1 \ge \deg (L) -1,
\]
and $h^0(L) \le \deg(L)$ if $\deg(L)>0$.
We also have that
 $h^1(L) = 0$ if $\deg(L) \ge 3$, or $\deg(L)=2$ and $L \not\sim K_C$, or 
$\deg(L)=1$ and $h^0(L)=0$. 
%}
We determine the almost Gorenstein property of $R$ according to $\deg(D)$.    
\begin{enumerate}
\item If $\deg(D)=1$ and $h^0(D) = 1$, then $D = P$ for some point $P\in C$. 
Since the possibility of $H_{C,P}$ with $g=2$ is either 
$\langle 2, 3\rangle $ or $\langle3,4,5\rangle$, 
$R$ is almost Gorenstein by \thmref{p:mpg}. 

\item 
Assume $\deg(D)=1$ and $h^0(D) = 0$.
Then $1 \le h^0(2D) \le 2$.

\begin{enumerate}
\item If $h^0(2D)=2$, then $2D\sim K_C$ and $R$ is Gorenstein. 
\item If  $h^0(2D) = 1$, then  $a(R)=0$ and $h^0(nD) = n-1$ for all $n\ge 1$ and thus 
\[P(R,t)= 1+ t^2+2t^3+ \ldots 
= \frac{1 + t^3+2t^4+2t^5}{(1-t^2)(1-t^3)}.
\] 
Let $w \in (K_R)_0$ be a nonzero element.
By \thmref{Stan-Thm}, we have 
\begin{align*}
P(K_R,t) &=\frac{2 + 2t + t^2 + t^5}{(1-t^2)(1-t^3)}, \\
 P(K_R/wR, t) &= \frac{1 + 2 t + t^2 - t^3 - 2 t^4 - t^5}{(1 - t^2)(1 - t^3)}
=\frac{1 + 2 t + t^2}{1 - t^2}.
\end{align*}
Hence $e_0( K_R/wR) = 4$ and we can check that $K_R$ is minimally generated by elements of degrees $0$, $1$, and one element of degree $2$,   and thus $\type(R)=5$. 
Since  $e_0( K_R/wR) = \type(R) - 1$, $R$ is almost Gorenstein. 

%Since $\cO_C(3D)$ is generated, for some $f\in R_2$ and $g\in R_3$, $(f,g)$ forms an 
%$R$-regular sequence and $P(R,t)= \frac{(1+t^3+2t^4+2t^5)}{(1-t^2)(1-t^3)}$.
%Since $\dim_k [K_R]_0 =2, \dim_k [K_R]_{-1} = h^0( K_C-D) \cong h^1(D) =0,  
%\dim_k [K_R]_1 = h^0(K_C +D) =3$, $K_R$ is generated by degrees $0$ and $1$  
% and $\type(R)=5$.  
%We have $a(R)=0$ and for some $\ne 0, w\in [K_R]_0$, $P(K_R/wR, t) = (1+2t+t^2)/(1-t^2)$.
%We can also say that since the least degree of non zero homogeneous element of 
%$\m_R$ is $2$, this shows $e_0(K_R/wR)=4= \type(R)-1$ and $R$ is almost Gorenstein.   
\end{enumerate}

\item Assume $\deg D=2$. If $h^0(D)=2$, then $D\sim K_C$ and $R$ is Gorenstein. 
We assume that $h^0(D)=1$. 
Then $a(R)=0$ and $h^0(nD) = 2n-1$ for all $n\ge 1$.
Hence we have 
\[
P(R,t) =\frac{1+t^2+ 2t^3}{(1-t)(1-t^2)}, 
\quad P(K_R, t)  =\frac{2+t+t^3}{(1-t)(1-t^2)}.
\]
We see that $\type(R)=3$ since $K_R$ is generated by two elements of degree $0$ and one element of degree $1$,  and that $e_0(K_R/wR)=2=\type(R)-1$.
Hence $R$ is almost Gorenstein. 

%Then we can take $(f_1, f_2)$ with $f_1\in R_1$ and $f_2\in R_2$ as 
%a minimal reduction of $\m_R$. Since $h^0(K_C - D)=0, h^0(K_C)=2$ and $h^0(K_C+D)=3$, 
%$K_R$ is generated by $2$ elements of degree $0$ and $1$ element of degree $1$ and 
%$\type(R)=3$.  Also, $h^0(nD) = 2n-1$ for $n\ge 1$, 
%$P(K_R/wR,t)=(1+t)/(1-t)$. Hence $e_0(K_R/wR)=2=\type(R)-1$ and $R$ is almost Gorenstein. 

\item 
Assume $\deg D = 3$. Then  $h^0(D)=2$. 
\begin{enumerate}
\item If $\cO_C(D)$ is not generated and if  $P$ is a base point of $\cO_C(D)$, 
then $H^0(D)= H^0(D-P)$. Since $h^0(D-P)=2=\deg(D-P)$, we may assume that $D = K_C+P$. 
Then $R$ is almost Gorenstein by Theorem \ref{p:K+p}.     

\item If  $\cO_C(D)$ is generated, then 
$H^0(\cO_C(D -K_C))=0$, 
since otherwise 
$D = K_C +P$ for some point $P\in C$ and then $\cO(D)$ is not generated. 
By Riemann-Roch Theorem, we also have $H^1(\cO_C(D -K_C))=0$.
Hence $K_R$ is generated by degree $0$ elements by 
\lemref{l:generation} (1).
Hence $R$ is level and not almost Gorenstein by \exref{level=nAG}.    
\end{enumerate}

{
\item If $\deg D \ge 4$, then $\cO_C(D)$ is generated and if $D\not\sim 2K_C$, 
then since  $H^1(\cO_C(D -K_C))=0$, $R$ is level by Lemma \ref{l:generation}
 and  not almost Gorenstein by  \exref{level=nAG}. If $D\sim 2K_C$,  
$R\cong (R')^{(2)}$, where $R' = k[X,Y,Z]/(X^2 - g_6(Y,Z))$, where $g_6(Y,Z)$ is 
a homogeneous polynomial of degree $6$ with no multiple roots. 
Then we see that $\type(R) =3$
 (see the proof of \lemref{l:generation} (1))
 and since $e_0(U) = 2$, $R$ is almost Gorenstein.}     
\qedhere
\end{enumerate}
\end{proof}

\section{Examples of determinantal singularities}\label{s:det}

In this section, we provide some examples under common settings as follows.
Let $S=k[s,u,v,w]$ be the polynomial ring and let $I_2(M) \subset S$ denote the ideal generated by $2\times 2$ minors of a $2\times 3$ matrix $M$ with entries in $S$. 
Note that we think $S$ a graded ring, but the degrees of the variables are not necessarily $1$.
Let $R=S/I_2(M)$. 
In the following, every ring $R$ we treat is a two-dimensional normal graded ring; in each case, the normality of $R$ will follows from that $R$ has only isolated singularity
 by Serre's criterion. 
Therefore, $R$ can be expressed as $R(C,D)=\bigoplus_{n\ge 0}H^0(C, \cO_C(D^{(n)}))$ of \proref{Tomari}. 
Let $\phi\: Y \to \spec R$ be a resolution of singularities. 
From Proposition 2.2 and Theorem 3.3 of \cite{Wt}, we have the isomorphisms 
\[
H_{\m_R}^2(R)\cong \bigoplus_{n\in \Z}H^1(C, \cO_C(D^{(n)})),
\quad 
R^1\phi_*\cO_Y\cong  \bigoplus_{n\ge 0} H_{\m_R}^2(R)_n
\]
of graded $R$-modules.
%{\color{red} 
As noted in Notation-Proposition \ref{Not6.1},
%}
% By \cite[Remark 2.10]{Wt}, the $a$-invariant (\cite{GW}) of $R$ is expressed as 
\[
a(R)=\max \defset{n \in Z}{H_{\m_R}^2(R)_n \ne 0}.
\]
We define  Poincar\'e series $P^0(R,t)$, $P^1(R,t)$ of $R$ by 
\[
P^i(R,t)=\sum_{n\ge 0}h^i(C, \cO_C(D^{(n)}))t^n \ \text{for $i=0,1$}.
\]
Then we have the following:
\[
a(R) = \deg P^1(R,t), \ g=h^1(\cO_C) = P^1(R,0), \ p_g(A) = \sum_{n=0}^{a(R)} \ell_R(H_{\m_R}^2(R)_n)= P^1(R,1).
\]
Let $P^{\chi}(R,t)=P^0(R,t)-P^1(R,t)$ and  $d_n=\deg D^{(n)}$.
By the Riemann-Roch theorem, we have 
\[
P^{\chi}(R,t)=\sum_{n\ge 0}\chi(\cO_C(D^{(n)}))t^n
=\sum_{n\ge 0}(1-g+d_n)t^n.
\]
Our graded ring $R$ also satisfies 
the condition  that $R/sR$ is a semigroup ring. Hence, it is easy to compute $P^0(R,t)$ explicitly.

For a rational function $f(t)=p(t)+r(t) /q(t)$, where $p, q, r$ are polynomials with $\deg r<\deg q$, we write $\pol(f)=p(t)$.
Note that for polynomials $p,q,r$ with
 $\deg r<\deg q$, 
we have $\pol( (p+r)/q ) = \pol(p/q)$.

\begin{prop}[cf. {\cite[4.8]{o.pg-splice}}]
 $\pol(P^0(R,t)) =  P^1(R,t)$.
\end{prop}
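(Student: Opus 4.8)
The plan is to compare the two Poincar\'e series $P^0(R,t)$ and $P^1(R,t)$ through the Euler characteristic series $P^{\chi}(R,t)=P^0(R,t)-P^1(R,t)$, and to exploit the fact that $P^1$ is a \emph{polynomial} (since $H^1(C,\cO_C(D^{(n)}))=0$ for $n\gg 0$ because $\deg D^{(n)}\to\infty$, each piece $H^2_{\m_R}(R)_n$ vanishes for $n>a(R)$). Thus $P^1(R,t)$ has degree $a(R)$ and already equals its own polynomial part, so it suffices to show $\pol(P^0(R,t))=\pol(P^{\chi}(R,t))$, i.e.\ that $P^0$ and $P^{\chi}$ differ by a rational function whose numerator has degree strictly less than its denominator.

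First I would write down $P^{\chi}(R,t)=\sum_{n\ge 0}(1-g+d_n)t^n$ using the Riemann--Roch expression already displayed, and observe that $d_n=\deg D^{(n)}=\deg\fl{nD}$ grows linearly in $n$ with a slope $\deg D>0$; consequently $P^{\chi}(R,t)$ is a rational function whose denominator reflects the eventual periodicity of $d_n-d_{n-1}$. Since $P^0(R,t)=P^{\chi}(R,t)+P^1(R,t)$ and $P^1(R,t)$ is a polynomial of degree $a(R)=\deg P^1(R,t)$, the two series $P^0$ and $P^{\chi}$ differ by a polynomial, so they have the \emph{same} proper (strictly-lower-degree-numerator) rational part. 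Using the remark recorded just before the statement --- namely that $\pol((p+r)/q)=\pol(p/q)$ whenever $\deg r<\deg q$ --- adding a polynomial to a rational function changes its polynomial part by exactly that polynomial. Hence $\pol(P^0(R,t))-\pol(P^{\chi}(R,t))=P^1(R,t)$.

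It then remains to show $\pol(P^{\chi}(R,t))=0$, equivalently that $P^{\chi}(R,t)$ is itself a proper rational function (numerator degree $<$ denominator degree). This is where Stanley-type reciprocity or a direct generating-function computation enters: because $\chi(\cO_C(D^{(n)}))=1-g+d_n$ is \emph{eventually a quasi-polynomial of degree one} in $n$ with no constant correction beyond the linear term dictated by $\deg D$, the series $\sum_n \chi(\cO_C(D^{(n)}))t^n$, when written over the natural denominator $\prod(1-t^{b_j})$ coming from the $\mathbb A^1$-bundle / graded structure, has numerator of strictly smaller degree. I expect the main obstacle to be exactly this last point: verifying carefully that the ``polynomial part'' of the Euler-characteristic series vanishes, which amounts to checking that the linear-growth contribution of $d_n$ is fully absorbed into the rational (proper) part rather than producing a genuine polynomial summand. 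Once that is established, combining the three displays gives $\pol(P^0(R,t))=\pol(P^{\chi}(R,t))+P^1(R,t)=P^1(R,t)$, as claimed.
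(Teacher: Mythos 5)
Your reduction is exactly the one the paper uses: $P^1(R,t)$ is a polynomial because $H^1(C,\cO_C(D^{(n)}))=0$ for $n>a(R)$, so from $P^0=P^1+P^{\chi}$ and the rule $\pol((p+r)/q)=\pol(p/q)$, everything comes down to showing $\pol(P^{\chi}(R,t))=0$. But that last equality is the entire mathematical content of the proposition, and you leave it unproven --- you explicitly call it ``the main obstacle'' and only gesture at ``Stanley-type reciprocity or a direct generating-function computation.'' The paper closes it in one line: $\chi(\cO_C(D^{(n)}))=1-g+d_n$ holds for \emph{every} $n\ge 0$ by Riemann--Roch, the function $n\mapsto d_n=\deg\fl{nD}$ is a quasi-polynomial in $n$ valid for all $n\ge 0$ (not just eventually), and by the standard fact on generating functions of quasi-polynomials (Stanley, \cite[\S 4.4]{stan.enum}) one gets $P^{\chi}(R,t)=r(t)/q(t)$ with $\deg r<\deg q$, i.e.\ $\pol(P^{\chi})=0$. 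Note that Stanley \emph{reciprocity} (the statement $P(K_S,t)=(-1)^dP(S,t^{-1})$ quoted elsewhere in the paper) is not the relevant tool here; what is needed is the elementary generating-function fact for quasi-polynomials.

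Beyond the omission, your hedge ``\emph{eventually} a quasi-polynomial of degree one'' would actually break the argument if taken at face value. If $\chi(\cO_C(D^{(n)}))$ agreed with a quasi-polynomial $Q(n)$ only for $n\ge n_0$, then $P^{\chi}$ would still be rational, but its polynomial part would be $\sum_{n<n_0}\bigl(\chi(\cO_C(D^{(n)}))-Q(n)\bigr)t^n$, which is in general nonzero, and the conclusion would be lost. The proposition is true precisely because Riemann--Roch is an exact identity in all degrees $n\ge 0$, so the whole deviation of $P^0$ from a proper rational series is carried by the finitely many nonzero $h^1$ terms. So: right skeleton and correct bookkeeping, but the load-bearing step requires the ``for all $n\ge 0$'' quasi-polynomiality of $1-g+d_n$ together with the citation to \cite[\S 4.4]{stan.enum} (or a short direct computation of series like $\sum_{n\ge 0}\fl{n\alpha}t^n$), not a reciprocity argument.
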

\begin{proof}
Since $1-g+d_n$ is a quasi-polynomial
function of $n$,  
there exist polynomials $r(t)$ and $q(t)$ such that $P^{\chi}(R,t)=r(t)/q(t)$ and $\deg r < \deg q$ by \cite[\S 4.4]{stan.enum}.
Hence 
the assertion follows from the equality $P^0(R,t)=P^1(R,t)+P^{\chi}(R,t)$. 
\end{proof}

Let $A$ be the localization of $R$ with respect to the maximal ideal $\m_R$.

%{\color{red}
\begin{ex}\label{ex:nrmpg}
We show an example that $A$ is non-rational and $\m$ is $p_g$-ideal. 
Let $M=\begin{pmatrix} u & v & w + s^{5} \\ v & w & u^2
\end{pmatrix}$.
 Then  $R=S/I_2(M)$ is a graded ring with $s\in R_1$, $u\in R_{3}$, $v\in R_{4}$, $w\in R_{5}$.
Since $R/sR = k[\gi{3,4,5}]$, $R$ is not Gorenstein and $P^0(R,t)= P^0(R/sR,t)/(1-t)=\left(1+t^{3}/(1-t)\right)/(1-t)$. 
Hence 
\[
P^1(R,t) = \pol\left(\frac{t^{3}-1}{(1-t)^2}\right)
 = \pol\left(\frac{t^{2}+t+1}{t-1}\right)
=2+t.
\]
Therefore, $g=g(C)=2$, 
$a(R) = 1$, 
$p_g(R)=P^1(R,1)=2+1=3$; in particular, $A$ is neither rational nor elliptic.
We also have $P^{\chi}(R,t) = P^0(R,t)-P^1(R,t) = (2 t-1)/(1-t)^2=\sum_{n\ge 0}(n-1)t^n$, and hence $d_n=n$.
This implies that  $D$ is a one-point  divisor.
Hence $\m$ is a $p_g$-ideal by \proref{p:mpg}.
\end{ex}
%}

\begin{ex}
We construct a 
non-elliptic  
non-Gorenstein singularity $A$ which is almost Gorenstein and $\m$ is not $p_g$-ideal. 
Let $M=\begin{pmatrix} u^2 & v^2+s^{10} & w \\ v & w & u
\end{pmatrix}$.
 Then  $R$ is a graded ring with $s\in R_1$, $u\in R_{4}$, $v\in R_{5}$, $w\in R_{7}$, and $R/sR = k[\gi{4,5,7}]$.
We see that  $R$ is not Gorenstein, and $R/sR$ is almost Gorenstein since the semigroup $\gi{4,5,7}$ is almost symmetric (see \cite{BF}).
Therefore, $A$ is also almost Gorenstein by \thmref{t:Hpg}.
We have $P^0(R,t)= \left(1+t^4+t^5+t^{7}/(1-t)\right)/(1-t)$, $P^1(R,t)=4 + 3 t + 2 t^2 + t^3 + t^4 + t^5$ and $P^{\chi}(R,t)= (4t-3)/(1-t)^2=\sum_{n\ge 0}(n-3)t^n$.
Hence we obtain that $g=4$, $a(R) = 5$, $p_g(R)=12$, $d_n=n$, 
$D$ is linearly equivalent to a 
one-point  
divisor, and hence $\m$ is not a $p_g$-ideal by  \proref{p:mpg}.
\end{ex}

\begin{ex}
We construct a singularity $A$ which is not almost Gorenstein. 
Let  $M=\begin{pmatrix} u^2 & v & w -s^8 \\ v-s^7 & w & u^3
\end{pmatrix}$.
 Then  $R$ is a graded ring with $s\in R_1$, $u\in R_{3}$, $v\in R_{7}$, $w\in R_{8}$, and $R/sR = k[\gi{3,7,8}]$.
%{\color{red} 
We have $P^0(R,t)= \left(1+t^3+t^{6}/(1-t)\right)/(1-t)
=(1+t^7+t^8) /(1-t)(1-t^3)$,  
$P^1(R,t)=4 + 3 t + 2 t^2 + 2 t^3 + t^4$ and $P^{\chi}(R,t)= (4t-3)/(1-t)^2=\sum_{n\ge 0}(n-3)t^n$.
Hence we obtain that $g=4$, $a(R) = 4$, $p_g(R)=12$,
$D$ is linearly equivalent to a one-point divisor.
If we put $H = \gi{3,7,8}$, then 
we see that $F(H)=5, g = 4$ and $\type(H) = 2$. Thus $H$ is not almost symmetric since 
$2g = 8 > F(H) + \type(H)=7$.
Hence $A$ is not almost Gorenstein by \thmref{t:Hpg}.
%}
\end{ex}

%{\color{red} 
\begin{ex}\label{ex:d=1}
We construct an elliptic singularity $A$ which is not Gorenstein.
Let 
$M=\begin{pmatrix} u & v & w  \\ v & w & u^2-s^3
\end{pmatrix}$.
 Then  $R$ is a graded ring with $s\in R_2$, $u\in R_{3}$, $v\in R_{4}$, $w\in R_{5}$, and $R/sR = k[\gi{3,4,5}]$.
Hence $R$ is not Gorenstein.
Let us compute $P^0(R,t)$ using a graded free resolution.
By the definition of $R$, we have the resolution
\[
0 \to S(-13)\oplus S(-14) \to S(-10)\oplus S(-9) \oplus S(-8) 
\to S \to R \to 0.
\]
Hence we have 
\[
P^0(R,t)= \frac{1-(t^8+t^9+t^{10})+(t^{13}+t^{14})}{(1-t^2)(1-t^3)(1-t^4)(1-t^5)}
=\frac{1-t+t^3}{(1-t)^2 (1+t)}, \ P^1(R,t)=1.
\]
Also, we have $P^{\chi}(R,t)=(t^2+t^3)/(1-t^2)^2$.
Therefore,  $g=p_g(R)=1$, 
$a(R) = 0$, and $d_{2n}=n$ for $n\ge 0$.
Hence $A$ is an elliptic singularity.
We will identify the $\Q$-divisor $D$.
Let $D=Q-\sum_{i=1}^d q_iP_i$, where $Q$ is a divisor, $P_i\in C$ are distinct points, and $q_i\in \Q$ satisfies $0<q_1\le \cdots \le q_d<1$.
Since $d_1=\chi(\cO_C(D^{(1)})) = 0$, we have $\deg Q=d$. 
Hence $\sum_{i=1}^d(2 - \ce{ 2q_i})=d_2=1$.
Therefore, $q_1\le 1/2 <q_2$.
For any $n\in \Z_{\ge 1}$ such that $nq_i\in \Z$ for $i=1, \dots, d$, we have $n=d_{2n}=\sum_{i=1}^d(2n-2nq_i)$. Since $q_1\le 1/2$, this implies that $d=1$ and $q_1=1/2$.
Note that $Q\not\sim P_1$ because $H^0(C,\cO_C(D^{(1)}))=0$, and the resolution graph of $A$ is the same as that of $k[[x,y,z]]/(x^2+y^3+z^{12})$; thus we obtain the case $d=1$ of \exref{ex:ellL=2}.
\end{ex}
%}

\providecommand{\bysame}{\leavevmode\hbox to3em{\hrulefill}\thinspace}
\providecommand{\MR}{\relax\ifhmode\unskip\space\fi MR }
% \MRhref is called by the amsart/book/proc definition of \MR.
\providecommand{\MRhref}[2]{%
  \href{http://www.ams.org/mathscinet-getitem?mr=#1}{#2}
}
\providecommand{\href}[2]{#2}

\end{document}